\documentclass[reqno]{amsart}
\usepackage{amsfonts,amsmath,amsthm,amssymb,mathrsfs}
\usepackage{color}
\usepackage{amsmath,amssymb,amsfonts,bm}
\usepackage{graphicx}
\usepackage{newunicodechar}
\usepackage{xcolor}
\usepackage{geometry}
\geometry{left=2.8cm,right=2.8cm}
\numberwithin{equation}{section}
\usepackage[pdfstartview=FitH,colorlinks=true]{hyperref}
\hypersetup{urlcolor=red, linkcolor=blue,citecolor=red}
\allowdisplaybreaks

\theoremstyle{plain}

\pagestyle{plain}

\newtheorem{theorem}{Theorem}[section]

\newtheorem{proposition}[theorem]{Proposition}

\newtheorem{lemma}[theorem]{Lemma}
\newtheorem{remark}[theorem]{Remark}

\makeatletter

\begin{document}
\title[KFP equation]
{Analytic smoothing effect for a model of\\ 
Kolmogorov-Prandtl type equations
}

\author[X.-D.Cao \& C.-J. Xu \& Yan Xu]
{Xiao-Dong Cao, Chao-Jiang Xu, Yan Xu}

\address{Xiao-Dong Cao, Chao-Jiang Xu
\newline\indent
School of Mathematics and Key Laboratory of Mathematical MIIT,
\newline\indent
Nanjing University of Aeronautics and Astronautics, Nanjing 210016, China
\newline\indent
Yan Xu
\newline\indent
Department of Mathematical Sciences, Tsinghua University, Beijing 100084, China
}
\email{caoxiaodong@nuaa.edu.cn; xuchaojiang@nuaa.edu.cn; xu-y@mail.tsinghua.edu.cn}

\date{}

\subjclass[]{35K55, 35Q35}

\keywords{}

\begin{abstract}
In this note, we consider the Cauchy problem for a model of Kolmogorov-Prandtl type equations. For the small perturbation of a constant state, we prove that it admits a unique classical solution with the initial datum in Sobolev space, and it is analytic for any positive times. 
\end{abstract}

\maketitle

\section{Introduction}
The Prandtl equation, known as the foundation of the boundary layer theory introduced by Prandtl in 1904~\cite{Prandtl}, is stated as follows:
\begin{equation}\label{Prandtl equation}
    \begin{cases}
        \partial_T f + f \partial_X f + g \partial_Y f + \partial_X p = \partial_Y^2 f, \ \ T > 0, \ \ X \in \mathbb{R}, \ \ Y > 0, \\
        \partial_X f + \partial_Y g = 0, \\
        \displaystyle f \big|_{Y = 0} = g \big|_{Y = 0} = 0, \quad \lim_{Y \rightarrow +\infty} f = F(T, X), \\
        f\big|_{T = 0} = f_0(X, Y),
    \end{cases}
\end{equation}
here $f(T, X, Y)$ and $g(T, X, Y)$ represent the tangential and normal velocities of the boundary layer, with $Y$ being the scaled normal variable to the boundary, while $F(T, X)$ and $p(T, X)$ are the values on the boundary of the tangential velocity and pressure of the outflow satisfying the Bernoulli law: 
$$
\partial_T F + F \partial_X F + \partial_X p = 0.
$$

Because of the degeneracy in the tangential variable, the mathematical analysis of Prandtl’s boundary layer equations strongly depends on the functional framework. In Sobolev spaces, local well-posedness under the classical monotonicity assumption was first established by Oleinik \cite{OS} using the Crocco transform, and later extended to global-in-time results by Xin–Zhang \cite{XZ-2004} under a favorable pressure condition. More recently, in parallel, Alexandre–Wang–Xu–Yang \cite{AWXY} and Masmoudi–Wong \cite{MW} proved Sobolev well-posedness by energy methods without relying on the Crocco transform. To overcome the derivative loss in the tangential direction, one has to work in more regular classes: local well-posedness was obtained in the analytic setting by Sammartino–Caflisch \cite{SC}, extended to Gevrey-$7/4$ by Gérard-Varet–Masmoudi \cite{G-VM}, and further to Gevrey-2 by Li–Yang \cite{Li-Yang}. 

The distinction between Sobolev and analytic/Gevrey frameworks is decisive for the rigorous justification of Prandtl’s boundary layer theory. Such justification has so far been established only in the analytic framework, initiated by Sammartino–Caflisch \cite{SC,SC2} via the Cauchy–Kovalevskaya approach and later refined by energy methods \cite{202}. Extending this justification to Sobolev initial data still remains an open problem. While Grenier’s result \cite{Grenier} excludes the non-monotone case, the existence theory under the monotonicity assumption \cite{OS,AWXY,MW} suggests that justification in the Sobolev setting may still be possible. A possible pathway was suggested by Li–Wu–Xu \cite{Li-Wu-Xu}, who proved a tangential Gevrey smoothing effect for the Sobolev solutions constructed in \cite{AWXY}. If this smoothing effect could be enhanced to full analyticity, one could then combine it with the analytic justification results \cite{SC,SC2,202} to rigorously validate Prandtl’s ansatz for Sobolev initial data.

However, the Prandtl equation \eqref{Prandtl equation} is a mixed-type degenerate parabolic equation, it presents substantial difficulties for mathematical analysis. Motivated by these challenges, we are led to investigate a model of degenerate nonlinear Kolmogorov-Prandtl type equations, under the monotonicity assumption, by using the Crocco transform, (see \cite{OS,XZ-2004,Z-2024})
\begin{equation}\label{1-1}
\begin{cases}
\partial_t u + y\partial_x u - u^2\,\partial_{y}^2 u = 0, & x, y \in \mathbb{R}, \ \ t > 0, \\
u \big|_{t = 0} = u_0(x, y), &
\end{cases}
\end{equation}
remark that here we omit the boundary problem and the range of the $y-$variable $0 < y < 1$, but consider the Cauchy problem in $\mathbb{R}^2$. Our aim is to prove the local well-posedness with Sobolev initial datum and analytical smoothing effect of Cauchy problem. The study of this relatively simple Prandtl-type model may provide valuable insight and inspiration for the general problem. In this sense, the present paper serves as a first step toward addressing to the Prandtl's ansatz with Sobolev initial datum. 

Let us give the definition of analytic function spaces $\mathcal A(\Omega)$ with $\Omega\subset\mathbb R^{2}$ an open domain. The analytic function spaces $\mathcal A(\Omega)$ is the space of $C^{\infty}$ functions $h$ satisfying:  there exists a constant $C>0$ such that,  for any $\alpha\in\mathbb N^{2}$, 
$$
\left\| \partial^\alpha h \right\|_{L^\infty(\Omega)} \leq C^{\left| \alpha \right| + 1} \alpha !.
$$
Remark that, by using the Sobolev embedding, we can replace the $L^{\infty}$ norm by the $L^{2}$ norm, or the norm in any Sobolev space in the above definition.

The main theorem of this paper is stated as follows: 
\begin{theorem}\label{Theorem}
For any $(u_0-1) \in H^4(\mathbb{R}^2)$,  suppose that $\|u_0-1\|_{H^4(\mathbb{R}^2)}\le \varepsilon_0$ small enough, then there exists $T>0$, such that the Cauchy problem \eqref{1-1} admits a unique solution $u \in C^\infty \left( ]0, T]; \mathcal{A}(\mathbb{R}^2) \right)$. Moreover, for $\eta > 2$, there exists a constant $L > 0$ which depending on $T, \eta, \left\| u_0 -1 \right\|_{H^4(\mathbb{R}^2)}$ such that 
$$
\sup_{0 < t \leq T} t^{(\eta + 1)\ell + \eta n} \left\| \partial_x^\ell \partial_y^n (u(t)-1) \right\|_{H^4(\mathbb{R}^2)} \leq L^{\ell + n + 1} \ell! \ n!, \quad \forall \ell, n \in \mathbb{N}.
$$
\end{theorem}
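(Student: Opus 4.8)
\emph{Plan of proof.}
The first move is to linearize around the constant state: set $v=u-1$, so that $(v_0)\in H^4(\mathbb R^2)$ with $\|v_0\|_{H^4}\le\varepsilon_0$ and
$$
\partial_t v + y\,\partial_x v - \partial_y^2 v = (2v+v^2)\,\partial_y^2 v =: R(v),
$$
a constant-coefficient Kolmogorov operator with a quadratic right-hand side. For the local well-posedness I would run an $H^4$ energy estimate on a suitable parabolic regularization. Two structural points make this close: first, the transport field $y\partial_x$ is skew-adjoint on $L^2_x$ (so the unboundedness of the coefficient $y$ is harmless, since $\langle y\partial_x w,w\rangle=0$), and the commutators $[\partial_x^a\partial_y^b,\,y\partial_x]=b\,\partial_x^{a+1}\partial_y^{b-1}$ that appear for $a+b\le 4$ are of order $\le 4$ and hence controlled by $\|v\|_{H^4}^2$; second, using $H^4(\mathbb R^2)\hookrightarrow W^{2,\infty}$ and $\|v\|_{H^4}\le\varepsilon_0$, the term $R(v)$ is a tame perturbation and, after one integration by parts in $y$, its top-order-in-$y$ piece $(2v+v^2)\partial_y^2 g$ is absorbed into the dissipation $\|\partial_y g\|_{L^2}^2$ produced by $-\partial_y^2$. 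This yields, for $T$ depending only on $\varepsilon_0$, a unique solution with $\sup_{[0,T]}\|v(t)\|_{H^4}\le 2\varepsilon_0$ and the parabolic gain $\partial_y v\in L^2([0,T];H^4)$; uniqueness follows from the same estimate applied to the difference of two solutions.

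For the analytic smoothing I would introduce the anisotropic weighted quantities
$$
\mathcal N_{\ell,n}(t)^2 = \sup_{0<s\le t}s^{2[(\eta+1)\ell+\eta n]}\,\big\|\partial_x^\ell\partial_y^n v(s)\big\|_{H^4}^2 + \int_0^t s^{2[(\eta+1)\ell+\eta n]}\,\big\|\partial_x^\ell\partial_y^{n+1}v(s)\big\|_{H^4}^2\,ds
$$
(possibly with extra polynomial weights $(\ell+1)^{-2}(n+1)^{-2}$ to absorb combinatorial losses), and prove by induction on $k=\ell+n$ that $\mathcal N_{\ell,n}(t)\le L^{\ell+n+1}\ell!\,n!$ for a constant $L=L(T,\eta,\varepsilon_0)$ fixed at the end. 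The base case $k\le 4$ is the local theory. For the inductive step, $g=\partial_x^\ell\partial_y^n v$ solves
$$
\partial_t g + y\,\partial_x g - \partial_y^2 g = -\,n\,\partial_x^{\ell+1}\partial_y^{n-1}v + \partial_x^\ell\partial_y^n R(v),
$$
and one runs the weighted $H^4$ energy estimate, in fact a \emph{combined} energy along the anti-diagonal $\{\ell'+n'=k+1\}$, since the transport commutator couples the pair $(\ell,n)$ to $(\ell+1,n-1)$. The key bookkeeping is this: after one integration by parts in $y$ the commutator pairing becomes $n\langle \partial_x^{\ell+1}\partial_y^{n}v,\,\partial_x^{\ell}\partial_y^{n-1}v\rangle$, whose second factor is of level $k$ (controlled inductively) and whose first factor is absorbed by the dissipation $\|\partial_x^{\ell+1}\partial_y^{n}v\|_{L^2}^2$ coming from the pair $(\ell+1,n-1)$; the choice of the exponent $(\eta+1)$ on $\partial_x$ (one more than the exponent $\eta$ on $\partial_y$) is exactly what makes the ratio of the two time-weights equal to $s^2$, so the dissipation can do this, while $\eta>2$ provides the positive residual power of $s$ that renders all the $s\to 0$ time-integrals convergent. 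The nonlinear term $\partial_x^\ell\partial_y^n R(v)$ is expanded by Leibniz; the dangerous high-$y$ piece $(2v+v^2)\partial_y^2 g$ is handled by one integration by parts against the dissipation using $\|2v+v^2\|_{L^\infty}\lesssim\varepsilon_0$, the pieces where few derivatives hit the coefficient are closed by Cauchy–Schwarz in time against the parabolic-gain terms (losing only powers of $s\le T$ thanks to $\eta>2$), the factors $\binom{\ell}{\ell'}\binom{n}{n'}$ combine with the inductive factorials via standard convolution bounds $\sum \tfrac{\ell!n!}{\ell'!n'!(\ell-\ell')!(n-n')!}(\cdots)\lesssim L^{\ell+n}\ell!n!$, and the spurious linear-in-$n$ or linear-in-$\ell$ factors produced when $\partial_y$ (resp.\ $\partial_x$) hits a coefficient are absorbed by a Grönwall factor $e^{C(\ell+n)\varepsilon_0 t}\le (e^{C\varepsilon_0 T})^{k+1}$, which is bounded by $2^{k+1}$ once $\varepsilon_0 T$ is small, i.e.\ at the cost of replacing $L$ by $2L$. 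Finally the genuinely self-interacting contributions (one factor being $g$ itself, the other being $O(\varepsilon_0)$) have small coefficient $C\varepsilon_0 T^{\eta+1}<1$ and are absorbed, closing the induction with $L$ chosen large.

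To conclude, $\mathcal N_{\ell,n}(t)\le L^{\ell+n+1}\ell!\,n!$ is precisely the displayed estimate, and combined with $H^4(\mathbb R^2)\hookrightarrow L^\infty$ and $\ell!\,n!\le(\ell+n)!$ it gives $\|\partial^\alpha(u(t)-1)\|_{L^\infty}\le (L\,t^{-(\eta+1)})^{|\alpha|+1}|\alpha|!$ for $0<t\le T$, i.e.\ $u(t)-1\in\mathcal A(\mathbb R^2)$ for every such $t$; differentiating the equation in $t$ and bootstrapping ($\partial_t v=-y\partial_x v+u^2\partial_y^2 v$ is analytic in space with controlled bounds) upgrades this to $u-1\in C^\infty(]0,T];\mathcal A(\mathbb R^2))$. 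The main obstacle, and the technical heart of the argument, is the middle step: reconciling the hypoelliptic (Kolmogorov) structure—where $x$-regularity is gained only through the commutator $[\partial_y,y\partial_x]=\partial_x$, forcing the mismatched time-weights and the anti-diagonal coupling—with the degeneracy of those weights as $t\to0$ and with the factorial combinatorics of the quadratic nonlinearity, all simultaneously.
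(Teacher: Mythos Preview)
Your local-existence sketch and the analyticity-from-the-bound conclusion are fine, but the heart of the proposal---the weighted energy estimate for $\mathcal N_{\ell,n}$---glosses over a term that will not close. When you differentiate $s^{2a_{\ell,n}}\|\partial_x^\ell\partial_y^n v\|_{H^4}^2$ in time (with $a_{\ell,n}=(\eta+1)\ell+\eta n$), besides the transport commutator $-n\,\partial_x^{\ell+1}\partial_y^{n-1}v$ that you handle correctly, you also pick up the weight-derivative term $2a_{\ell,n}\,s^{2a_{\ell,n}-1}\|\partial_x^\ell\partial_y^n v\|^2$. This term is purely linear (no $\varepsilon_0$ smallness to exploit), carries a coefficient $a_{\ell,n}\sim(\eta+1)k$, and has an $s^{-1}$ singularity relative to $\mathcal N_{\ell,n}^2$, so it is not Gr\"onwallable. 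For $n\ge1$ you can try to pay for it with the level-$k$ dissipation $\int s^{2a_{\ell,n-1}}\|\partial_x^\ell\partial_y^n v\|^2\,ds$, but that leaves a factor $a_{\ell,n}\sim k$ against $((n-1)!)^2$ where the target is $(n!)^2=n^2((n-1)!)^2$; whenever $n\ll\sqrt{k}$ the induction blows up, and for $n=0$ (pure $\partial_x$) there is no such dissipation available at all. Your anti-diagonal coupling does not help here: this term sits at every node $(\ell,n)$ and is not a commutator.

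The paper sidesteps exactly this obstruction by replacing $\partial_x^\ell\partial_y^n$ with powers of the single time-dependent field $\mathcal H_\eta=\tfrac{t^{\eta+1}}{\eta+1}\partial_x+t^\eta\partial_y$, chosen so that $[\partial_t+y\partial_x,\mathcal H_\eta]=\eta t^{\eta-1}\partial_y$: the $t^\eta\partial_x$ produced when $\partial_t$ hits the $x$-coefficient is \emph{cancelled} by the $-t^\eta\partial_x$ produced by $[y\partial_x,t^\eta\partial_y]$. With the time weight built into $\mathcal H_\eta$ there is no separate ``$a/s$'' term, $\mathcal H_\eta^k v|_{t=0}=0$ gives zero initial energy, and the only commutator is $k\eta t^{\eta-1}\partial_y\mathcal H_\eta^{k-1}v$, controlled by the level-$(k{-}1)$ dissipation with the lone factor $k$ absorbed via $k!=k\cdot(k-1)!$. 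The induction is thus one-parameter and closes for a fixed constant $A$. At the end, $\partial_x^\ell\partial_y^n$ is recovered \emph{not} through an anti-diagonal argument but by writing $t^{\eta+1}\partial_x$ and $t^\eta\partial_y$ as linear combinations of $\mathcal H_\eta$ and $t^{\eta/2}\mathcal H_{\eta/2}$ (this is why $\eta>2$ is needed: one needs $\eta/2>1$ as well), applying the $\mathcal H^k$ bound for both fields, and interpolating.
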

\begin{remark}
For the local well-posedness, we can take $(u_0-1) \in H^r(\mathbb{R}^2), r>1$ with small norm, and get the solution belongs to  $(u-1) \in L^\infty([0, T];  H^r(\mathbb{R}^2))$. But for the analytic smoothing effect of Cauchy problem, we consider the initial datum belonging to $(u_0-1) \in H^4(\mathbb{R}^2)$, it is also possible to improve. 
\end{remark}

The rest of this paper is organized as follows: to overcome the difficulty caused by the energy estimate of the nonlinear term in the Sobolev space, we first show the uniform bounded of the modified system in Section \ref{section2}. Then by using this uniform estimate, we employ the iteration method in Section \ref{existence} to obtain the existence and first-order energy estimate of the original system. In section \ref{section4}, we establish the energy estimate for high-order derivations. And in Section \ref{section5}, we prove our main regularity result, Theorem \ref{Theorem}.

\section{Uniform bounded of the modified system}\label{section2}
We first change the notation of solution from $u=1+(u-1)=1+v$ to $v$,  then the Cauchy problem \eqref{1-1} transfer to 
\begin{equation}\label{2-1}
\begin{cases}
 \partial_t v + y\partial_x v - (1+v)^2 \partial_{y}^2 v = 0, \ \  (x, y) \in \mathbb{R}^2,\ \   t > 0, \\
v \big|_{t = 0} = v_0(x, y).
\end{cases}
\end{equation}
Take a constant satisfying $0 < \delta \ll 1$. We now consider the following modified system
\begin{equation}\label{equ-m}
\begin{cases}
    \partial_t v_\delta + \langle \delta \tilde y \rangle^{-1}y \partial_x \left(\langle \delta \tilde y \rangle^{-1}v_\delta\right) - \langle \delta \tilde y \rangle^{-1}(1 + g)^2 \partial_y^2 \left(\langle \delta \tilde y \rangle^{-1}v_\delta \right)= 0, \quad \tilde  y=(x, y) \in \mathbb{R}^2, \quad t > 0, \\
    v_\delta \big|_{t = 0} :=\langle \delta \tilde y \rangle^{-1}v_0:=v_{\delta, 0},
\end{cases}
\end{equation}
to establish a uniform bounded estimate (independent of $\delta$) for the approximate solution $v_\delta$ in $H^4(\mathbb{R}^2)$, as a preparation for proving the existence of solution of the Cauchy problem \eqref{2-1}. In this section, we allow all implied constants in the notation $\lesssim$ to be independent of $\delta$.

\begin{proposition}\label{prop2.1-0}
Let $0<\delta\ll1,\, T > 0$, $v_0 \in H^4(\mathbb{R}^2)$ and there exists a sufficiently small constant $\varepsilon_0>0$ such that 
\begin{align}\label{g small} 
     \left\|g\right\|_{L^{\infty}([0, T]; H^{4}(\mathbb R^{2}))}\le\varepsilon_0,\ \   \ \ \ \ 
     \|v_0\|_{H^4(\mathbb R^{2})}\le \varepsilon_0.
\end{align}   
Then the Cauchy problem \eqref{equ-m} admits a unique weak solution. Moreover, we have: 
\begin{align}\label{uniform bounded} 
     \left\|v_\delta(t)\right\|^2_{H^4(\mathbb{R}^2)} + \frac{1}{4}\int_0^t\left\|\partial_y\left(\langle \delta \tilde y \rangle^{-1}v_\delta\right)(s)\right\|^2_{H^4(\mathbb R^2)}ds&\le B_0\left\|v_0\right\|^2_{H^4(\mathbb{R}^2)},\ \ \ 0<t\le T,
\end{align}
where the constant $B_0>0$ is independent of $0 < \delta \ll 1$.
\end{proposition}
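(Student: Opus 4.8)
The plan is to establish \eqref{uniform bounded} by a standard energy method for the linearized-type equation \eqref{equ-m}, treating $g$ as a given small coefficient and $w_\delta := \langle \delta \tilde y\rangle^{-1} v_\delta$ as the natural unknown appearing under the $x$- and $y$-derivatives. Existence and uniqueness of the weak solution follows from a Galerkin (or Cauchy–Kovalevskaya / contraction) argument once the a priori estimate is in hand, so the heart of the matter is the bound. I would first work at the $L^2$ level: multiply the equation by $v_\delta$ and integrate over $\mathbb R^2$. The transport-type term $\langle\delta\tilde y\rangle^{-1} y\,\partial_x w_\delta$ is skew-symmetric up to a commutator with $\langle\delta\tilde y\rangle^{-1}$, and since $\partial_x \langle\delta\tilde y\rangle^{-1} = O(\delta)$ and $y\langle\delta\tilde y\rangle^{-1}$ is bounded, these commutator contributions are harmless and $\delta$-independent (in fact they gain a power of $\delta$). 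The key good term comes from the diffusion: integrating $-\langle\delta\tilde y\rangle^{-1}(1+g)^2\partial_y^2 w_\delta$ against $v_\delta = \langle\delta\tilde y\rangle w_\delta$ produces, after moving one $\partial_y$ by parts, a term $\int (1+g)^2 (\partial_y w_\delta)^2$ plus lower-order terms involving $\partial_y g$, $\partial_y\langle\delta\tilde y\rangle^{-1}$; since $\|g\|_{L^\infty}\le\varepsilon_0$ is small, $(1+g)^2 \ge 1/2$, which yields the coercive dissipation $\tfrac14\|\partial_y w_\delta\|_{L^2}^2$ after absorbing the small/bounded error terms.

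Next I would differentiate the equation by $\partial^\alpha$ for $|\alpha|\le 4$ and run the same computation. The new feature is the commutators $[\partial^\alpha, \langle\delta\tilde y\rangle^{-1}]$, $[\partial^\alpha, (1+g)^2]$, and $[\partial^\alpha, y\partial_x]$. The first is controlled because every derivative hitting $\langle\delta\tilde y\rangle^{-1}$ either costs a factor $\delta$ or keeps the symbol bounded, so these terms are estimated by $C\|w_\delta\|_{H^4}\|\partial_y w_\delta\|_{H^3}$ (or without the $\delta$, by $\|v_\delta\|_{H^4}^2$) and absorbed; the commutator $[\partial^\alpha, y\partial_x] = (\text{coefficient})\,\partial_x\partial^{\alpha'}$ with $|\alpha'| = |\alpha|-1$ is, after pairing with $\partial^\alpha v_\delta$ and using that $y\langle\delta\tilde y\rangle^{-1}$ is bounded, of lower order in the number of derivatives and hence controlled by $\|v_\delta\|_{H^4}^2$. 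The genuinely nonlinear commutator $[\partial^\alpha,(1+g)^2]\partial_y^2 w_\delta$ must be handled with the Moser-type / Kato–Ponce inequality: its $L^2$ norm is bounded by $\|\nabla g\|_{H^3}\|\partial_y^2 w_\delta\|_{H^3} + \|g\|_{H^4}\|\partial_y^2 w_\delta\|_{L^\infty}$-type terms, which one rewrites so that at most one factor carries a second $y$-derivative on $w_\delta$; pairing with $\partial^\alpha v_\delta$ and integrating the worst $\partial_y^2 w_\delta$ once by parts converts it to $\partial_y w_\delta$, at the price of producing $\|g\|_{H^4}\|\partial_y w_\delta\|_{H^4}\|v_\delta\|_{H^4}$, absorbed using Young's inequality and the smallness $\|g\|_{L^\infty([0,T];H^4)}\le\varepsilon_0$ into the dissipative term.

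Summing over $|\alpha|\le 4$, one arrives at a differential inequality of the form
\[
\frac{d}{dt}\|v_\delta(t)\|_{H^4}^2 + \tfrac12\|\partial_y w_\delta(t)\|_{H^4}^2 \;\le\; C\,\varepsilon_0\,\|\partial_y w_\delta(t)\|_{H^4}^2 + C\,\|v_\delta(t)\|_{H^4}^2,
\]
where the first error term on the right is absorbed by the dissipation once $\varepsilon_0$ is small enough to make $C\varepsilon_0 \le \tfrac14$. Grönwall's lemma then gives $\|v_\delta(t)\|_{H^4}^2 + \tfrac14\int_0^t\|\partial_y w_\delta(s)\|_{H^4}^2\,ds \le e^{CT}\|v_{\delta,0}\|_{H^4}^2$, and since $\|v_{\delta,0}\|_{H^4} = \|\langle\delta\tilde y\rangle^{-1}v_0\|_{H^4}\le C\|v_0\|_{H^4}$ uniformly in $\delta$ (again because all derivatives of $\langle\delta\tilde y\rangle^{-1}$ are bounded independently of $\delta$), we obtain \eqref{uniform bounded} with $B_0 := C e^{CT}$ independent of $\delta$. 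The main obstacle I anticipate is bookkeeping in the high-order nonlinear commutator $[\partial^\alpha,(1+g)^2]\partial_y^2 w_\delta$: one must carefully distribute derivatives so that the single second-order $y$-derivative on $w_\delta$ can always be integrated by parts into the dissipation term, rather than being left as a positive-order loss; the weighted structure with $\langle\delta\tilde y\rangle^{-1}$ inside the derivatives is designed precisely so that this is possible with $\delta$-uniform constants, and verifying that uniformity at every step is the second, more tedious, point requiring care.
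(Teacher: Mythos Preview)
Your energy-method approach for the uniform bound \eqref{uniform bounded} is essentially the same as the paper's: take the $H^4$ scalar product with $v_\delta$, exploit the skew-symmetry of the transport term modulo weight commutators (the paper expands $\langle\delta\tilde y\rangle^{-1}\partial^\alpha v_\delta$ via Leibniz back into derivatives of $w_\delta=\langle\delta\tilde y\rangle^{-1}v_\delta$ and uses that $y\langle\delta\tilde y\rangle^{-1}\partial_y^\beta\langle\delta\tilde y\rangle$ and $\langle\delta\tilde y\rangle^{-1}\partial_y^\beta\langle\delta\tilde y\rangle$ are bounded uniformly in $\delta$), integrate the diffusion once by parts to produce the coercive $\tfrac14\|\partial_y w_\delta\|_{H^4}^2$, estimate the Leibniz remainders in $[\partial^\alpha,(1+g)^2]$ by case-splitting with Sobolev embedding (your Kato--Ponce suggestion is an equivalent packaging of the same Moser-type bound), and close by Gr\"onwall. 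For the a priori estimate the two arguments coincide in structure.

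The one genuine methodological difference is the existence step: the paper does \emph{not} use Galerkin. Instead it runs the same $H^4$ energy computation on the adjoint operator $P=-\partial_t+(\cdots)^*$ applied to test functions $\phi\in C^\infty([0,T];C_c^\infty)$ with $\partial^\alpha\phi(T)=0$, obtaining $\|\phi\|_{L^\infty_t H^4}\le C\|P\phi\|_{L^1_t H^4}$; this makes $P$ injective, so the linear functional $P\phi\mapsto (v_{\delta,0},\phi(0))_{H^4}$ is well-defined and bounded on the range of $P$ in $L^1_t H^4$, and Hahn--Banach plus Riesz representation produce the weak solution $v_\delta\in L^\infty_t H^4$. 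Your Galerkin route would also work and is perhaps more constructive; the duality method has the advantage that it recycles verbatim the commutator computations already needed for the uniform bound, so no additional structure has to be checked.
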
 

\begin{proof}
\textbf{Existence of weak solution for \eqref{equ-m}:} \ Let 
$$P=-\partial_t+\left(\langle \delta \tilde y \rangle^{-1}y \partial_x(\langle \delta \tilde y \rangle^{-1}\cdot)^*-\langle \delta \tilde y \rangle^{-1}(1 + g)^2 \partial_y^2 \left(\langle \delta \tilde y \rangle^{-1}\cdot \right)\right)^*,$$ 
here the adjoint operator $(\cdot)^*$ is taken with respect to the scalar product in the Hilbert space $H^4(\mathbb R^2)$. For all $\phi\in C^\infty([0, T]; C_c^\infty(\mathbb R^2))$ with $\partial^\alpha\phi(T)=0, \forall \alpha\in\mathbb N^2$, one has
\begin{align*}
\begin{split}
    \left(\phi, P\phi\right)_{H^4(\mathbb R^2)}&=-\frac12\frac{d}{dt}\left\|\phi(t)\right\|^2_{H^4(\mathbb R^2)}+\left(\langle \delta \tilde y \rangle^{-1}y \partial_x(\langle \delta \tilde y \rangle^{-1}\phi), \phi\right)_{H^4(\mathbb R^2)}\\
    &\quad-\left(\langle \delta \tilde y \rangle^{-1}(1 + g)^2 \partial_y^2 \left(\langle \delta \tilde y \rangle^{-1}\phi \right), \phi\right)_{H^4(\mathbb R^2)}\\
    &=-\frac12\frac{d}{dt}\left\|\phi(t)\right\|^2_{H^4(\mathbb R^2)}+I_1+I_2.
\end{split}    
\end{align*}
We now compute both terms separately. From the Leibniz formula, the term $I_1$ can be rewritten as 
\begin{align*}
    I_1&=\sum_{|\alpha|\le4}\left(\partial^{\alpha_1}_x\partial^{\alpha_2}_y\left(\langle \delta \tilde y \rangle^{-1}y \partial_x(\langle \delta \tilde y \rangle^{-1}\phi)\right), \partial^\alpha\phi\right)_{L^2(\mathbb R^2)}\\
    &=\sum_{|\alpha|\le4}\left(\partial^\alpha\left(y \partial_x(\langle \delta \tilde y \rangle^{-1}\phi)\right), \langle \delta \tilde y \rangle^{-1}\partial^\alpha\phi\right)_{L^2(\mathbb R^2)}\\
    &\qquad+\sum_{|\alpha|\le4}\sum_{0<\beta\le\alpha_2}\frac{\alpha_2!}{\beta!(\alpha_2-\beta)!}\left(\partial^{\alpha_1}_x\partial^{\alpha_2-\beta}_y\left(y \partial_x(\langle \delta \tilde y \rangle^{-1}\phi)\right), \partial^\beta\langle \delta \tilde y \rangle^{-1}\partial^\alpha\phi \right)_{L^2(\mathbb R^2)}\\
    &=\sum_{|\alpha|\le4}\left(y \partial_x\partial^\alpha(\langle \delta \tilde y \rangle^{-1}\phi), \langle \delta \tilde y \rangle^{-1}\partial^\alpha\phi\right)_{L^2(\mathbb R^2)}+\sum_{|\alpha|\le4}\left([\partial^\alpha, y]\partial_x(\langle \delta \tilde y \rangle^{-1}\phi), \langle \delta \tilde y \rangle^{-1}\partial^\alpha\phi\right)_{L^2(\mathbb R^2)}\\
    &\qquad+\sum_{|\alpha|\le4}\sum_{0<\beta\le\alpha_2}\frac{\alpha_2!}{\beta!(\alpha_2-\beta)!}\left(\partial^{\alpha_1}_x\partial^{\alpha_2-\beta}_y\left(y \partial_x(\langle \delta \tilde y \rangle^{-1}\phi)\right), \partial^\beta\langle \delta \tilde y \rangle^{-1}\partial^\alpha\phi\right)_{L^2(\mathbb R^2)}=I_{1, 1}+I_{1, 2}+I_{1, 3}.
\end{align*}
For the term $I_{1, 1}$. By using the Leibniz formula, we have 
\begin{align*}
   I_{1, 1}&=\sum_{|\alpha|\le4}\left(y \partial_x\partial^\alpha(\langle \delta \tilde y \rangle^{-1}\phi), \langle \delta \tilde y \rangle^{-1}\partial^\alpha(\langle \delta \tilde y \rangle\langle \delta \tilde y \rangle^{-1}\phi)\right)_{L^2(\mathbb R^2)}\\
   &=\sum_{|\alpha|\le4}\left(y \partial_x\partial^\alpha(\langle \delta \tilde y \rangle^{-1}\phi), \langle \delta \tilde y \rangle^{-1}\partial^{\alpha_2}_y(\langle \delta \tilde y \rangle\partial^{\alpha_1}_x(\langle \delta \tilde y \rangle^{-1}\phi))\right)_{L^2(\mathbb R^2)}\\
   &=\sum_{|\alpha|\le4}\sum_{0<\beta\le\alpha_2}\frac{\alpha_2!}{\beta!(\alpha_2-\beta)!}\left(y \partial_x\partial^\alpha(\langle \delta \tilde y \rangle^{-1}\phi), \langle \delta \tilde y \rangle^{-1}\partial^\beta_y\langle \delta \tilde y \rangle \partial^{\alpha_1}_x\partial^{\alpha_2-\beta}_y(\langle \delta \tilde y \rangle^{-1}\phi)\right)_{L^2(\mathbb R^2)}\\
   &\qquad+\sum_{|\alpha|\le4}\left(y \partial_x\partial^\alpha(\langle \delta \tilde y \rangle^{-1}\phi), \partial^\alpha(\langle \delta \tilde y \rangle^{-1}\phi)\right)_{L^2(\mathbb R^2)}.
\end{align*}
From the integration by parts, we have 
$$\sum_{|\alpha|\le4}\left(y \partial_x\partial^\alpha(\langle \delta \tilde y \rangle^{-1}\phi), \partial^\alpha(\langle \delta \tilde y \rangle^{-1}\phi)\right)_{L^2(\mathbb R^2)}=0,$$
and 
\begin{align*}
   &\sum_{|\alpha|\le4}\sum_{0<\beta\le\alpha_2}\frac{\alpha_2!}{\beta!(\alpha_2-\beta)!}\left(y \partial_x\partial^\alpha(\langle \delta \tilde y \rangle^{-1}\phi), \langle \delta \tilde y \rangle^{-1}\partial^\beta_y\langle \delta \tilde y \rangle \partial^{\alpha_1}_x\partial^{\alpha_2-\beta}_y(\langle \delta \tilde y \rangle^{-1}\phi)\right)_{L^2(\mathbb R^2)}\\
    &=-\sum_{|\alpha|\le4}\sum_{0<\beta\le\alpha_2}\frac{\alpha_2!}{\beta!(\alpha_2-\beta)!}\left(\partial^\alpha(\langle \delta \tilde y \rangle^{-1}\phi), y\langle \delta \tilde y \rangle^{-1}\partial^\beta_y\langle \delta \tilde y \rangle \partial^{\alpha_1+1}_x\partial^{\alpha_2-\beta}_y(\langle \delta \tilde y \rangle^{-1}\phi)\right)_{L^2(\mathbb R^2)}.
\end{align*}
Since $\left|y \langle \delta \tilde y \rangle^{-1}\partial^\beta_y \langle \delta \tilde y \rangle\right|\lesssim 1$ for $\beta\ge1$, then using the Cauchy-Schwarz inequality
\begin{align*}
   |I_{1, 1}|&\le\sum_{|\alpha|\le4}\sum_{0<\beta\le\alpha_2}\frac{\alpha_2!}{\beta!(\alpha_2-\beta)!}\left|\left(\partial^\alpha(\langle \delta \tilde y \rangle^{-1}\phi), y\langle \delta \tilde y \rangle^{-1}\partial^\beta_y\langle \delta \tilde y \rangle \partial^{\alpha_1+1}_x\partial^{\alpha_2-\beta}_y(\langle \delta \tilde y \rangle^{-1}\phi)\right)_{L^2(\mathbb R^2)}\right|\\
    &\le C_1\sum_{|\alpha|\le4}\sum_{0<\beta\le\alpha_2}\frac{\alpha_2!}{\beta!(\alpha_2-\beta)!}\left\|\partial^\alpha(\langle \delta \tilde y \rangle^{-1}\phi)\right\|_{L^2(\mathbb R^2)}\left\| \partial^{\alpha_1+1}_x\partial^{\alpha_2-\beta}_y(\langle \delta \tilde y \rangle^{-1}\phi)\right\|_{L^2(\mathbb R^2)}\\
    &\le\tilde C_1\left\|\langle \delta \tilde y \rangle^{-1}\phi\right\|^2_{H^4(\mathbb R^2)}.
\end{align*}
For the term $I_{1, 2}$. Since 
\begin{align}\label{C-y}
[\partial^\alpha, y]=[\partial^{\alpha_1}_x\partial^{\alpha_2}_y, y]=[\partial^{\alpha_2}_y, y]\partial^{\alpha_1}_x= 
\begin{cases}
	0,&\quad \alpha_2=0,\\
	\alpha_2 \partial^{\alpha_1}_x\partial^{\alpha_2-1}_y, &\quad \alpha_2\ge1,
\end{cases}
\end{align}
we have
\begin{align*}
    I_{1, 2}=\sum_{|\alpha|\le4, \alpha_2\ne0} \alpha_2\left(\partial^{\alpha_1+1}_x\partial^{\alpha_2-1}_y(\langle \delta \tilde y \rangle^{-1}\phi), \langle \delta \tilde y \rangle^{-1}\partial^\alpha\phi\right)_{L^2(\mathbb R^2)}.
\end{align*}
By using the Leibniz formula, we have 
\begin{align*}
   I_{1, 2}&=\sum_{|\alpha|\le4, \alpha_2\ne0} \alpha_2\left(\partial^{\alpha_1+1}_x\partial^{\alpha_2-1}_y(\langle \delta \tilde y \rangle^{-1}\phi), \langle \delta \tilde y \rangle^{-1}\partial^\alpha(\langle \delta \tilde y \rangle\langle \delta \tilde y \rangle^{-1}\phi)\right)_{L^2(\mathbb R^2)}\\
   &=\sum_{|\alpha|\le4, \alpha_2\ne0}\sum_{\beta\le\alpha_2}\frac{\alpha_2 \cdot \alpha_2!}{\beta!(\alpha_2-\beta)!}\left(\partial^{\alpha_1+1}_x\partial^{\alpha_2-1}_y(\langle \delta \tilde y \rangle^{-1}\phi), \langle \delta \tilde y \rangle^{-1}\partial^\beta_y\langle \delta \tilde y \rangle\partial^{\alpha_1}_x\partial^{\alpha_2-\beta}_y(\langle \delta \tilde y \rangle^{-1}\phi)\right)_{L^2(\mathbb R^2)}.
\end{align*}
Since $\left| \langle \delta \tilde y \rangle^{-1}\partial^\beta\langle \delta \tilde y \rangle\right|\lesssim 1$ for $\beta\ge0$, from the Cauchy-Schwarz inequality we have
\begin{align*}
   |I_{1, 2}|& \leq C_2\sum_{|\alpha|\le4, \alpha_2\ne0}\sum_{\beta\le\alpha_2}\frac{\alpha_2 \cdot \alpha_2!}{\beta!(\alpha_2-\beta)!}\left\|\partial^{\alpha_1+1}_x\partial^{\alpha_2-1}_y(\langle \delta \tilde y \rangle^{-1}\phi)\right\|_{L^2(\mathbb R^2)} \left\|\partial^{\alpha_1}_x\partial^{\alpha_2-\beta}_y(\langle \delta \tilde y \rangle^{-1}\phi)\right\|_{L^2(\mathbb R^2)}\\
   &\leq\tilde C_2\left\|\langle \delta \tilde y \rangle^{-1}\phi\right\|^2_{H^4(\mathbb R^2)}.
\end{align*}
For the term $I_{1, 3}$. Since we can deduce the following result by using the Leibniz formula,  
\begin{align*}
    &\left(\partial^{\alpha_1}_x\partial^{\alpha_2-\beta}_y\left(y \partial_x(\langle \delta \tilde y \rangle^{-1}\phi)\right), \partial^\beta\langle \delta \tilde y \rangle^{-1}\partial^\alpha\phi\right)_{L^2(\mathbb R^2)}\\
    &=\left(\partial^{\alpha_1}_x\partial^{\alpha_2-\beta}_y\left(y \partial_x(\langle \delta \tilde y \rangle^{-1}\phi)\right), \partial^\beta\langle \delta \tilde y \rangle^{-1}\partial^\alpha(\langle \delta \tilde y \rangle\langle \delta \tilde y \rangle^{-1}\phi)\right)_{L^2(\mathbb R^2)}\\
    &=\sum_{\sigma\le\alpha_2}\frac{\alpha_2!}{\sigma!(\alpha_2-\sigma)!}\left(\partial^{\alpha_1}_x\partial^{\alpha_2-\beta}_y\left(y \partial_x(\langle \delta \tilde y \rangle^{-1}\phi)\right), \partial^\beta\langle \delta \tilde y \rangle^{-1}\partial^\sigma_y\langle \delta \tilde y \rangle \partial^{\alpha_1}_x\partial^{\alpha_2-\sigma}_y(\langle\delta \tilde y \rangle^{-1}\phi)\right)_{L^2(\mathbb R^2)}\\
    &=\sum_{\sigma\le\alpha_2}\frac{\alpha_2!}{\sigma!(\alpha_2-\sigma)!}\left(\partial^{\alpha_1+1}_x\partial^{\alpha_2-\beta}_y\left(\langle \delta \tilde y \rangle^{-1}\phi\right), y\partial^\beta\langle \delta \tilde y \rangle^{-1}\partial^\sigma_y\langle \delta \tilde y \rangle \partial^{\alpha_1}_x\partial^{\alpha_2-\sigma}_y(\langle\delta \tilde y \rangle^{-1}\phi)\right)_{L^2(\mathbb R^2)}\\
    &\qquad+\sum_{\sigma\le\alpha_2}\frac{\alpha_2!}{\sigma!(\alpha_2-\sigma)!}\left([\partial^{\alpha_1}_x\partial^{\alpha_2-\beta}_y, y]\partial_x(\langle \delta \tilde y \rangle^{-1}\phi), \partial^\beta\langle \delta \tilde y \rangle^{-1}\partial^\sigma_y\langle \delta \tilde y \rangle \partial^{\alpha_1}_x\partial^{\alpha_2-\sigma}_y(\langle\delta \tilde y \rangle^{-1}\phi)\right)_{L^2(\mathbb R^2)}, 
\end{align*}
together with the fact that $\left|y\partial^\beta\langle \delta \tilde y \rangle^{-1}\partial^\sigma_y\langle \delta \tilde y \rangle\right|\lesssim1$ for all $\beta,\sigma\ge1$ and \eqref{C-y}, then the Cauchy-Schwarz inequality gives that 
\begin{align*}
    |I_{1, 3}|&=\sum_{|\alpha|\le4}\sum_{0<\beta\le\alpha_2}\frac{\alpha_2!}{\beta!(\alpha_2-\beta)!}\left|\left(\partial^{\alpha_1}_x\partial^{\alpha_2-\beta}_y\left(y \partial_x(\langle \delta \tilde y \rangle^{-1}\phi)\right), \partial^\beta\langle \delta \tilde y \rangle^{-1}\partial^\alpha\phi\right)_{L^2(\mathbb R^2)}\right|\\ 
    &\le C_3\left\|\langle \delta \tilde y \rangle^{-1}\phi\right\|^2_{H^4(\mathbb R^2)}.
\end{align*}
For $I_2$, by using the Leibniz formula, we can write it as 
\begin{align*}
    I_2&=-\sum_{|\alpha|\le4}\left(\partial^{\alpha_1}_x\partial^{\alpha_2}_y\left(\langle \delta \tilde y \rangle^{-1}(1 + g)^2 \partial_y^2 \left(\langle \delta \tilde y \rangle^{-1}\phi \right)\right), \partial^\alpha\phi\right)_{L^2(\mathbb R^2)}\\
    &=-\sum_{|\alpha|\le4}\left(\langle \delta y \rangle^{-1}\partial^{\alpha}\left((1 + g)^2 \partial_y^2 \left(\langle \delta \tilde y \rangle^{-1}\phi \right)\right), \partial^\alpha\phi\right)_{L^2(\mathbb R^2)}\\
    &\qquad-\sum_{|\alpha|\le4}\sum_{0<\beta\le\alpha_2}\frac{\alpha_2!}{\beta!(\alpha_2-\beta)!}\left(\partial^\beta_y\langle \delta \tilde y \rangle^{-1}\partial^{\alpha_1}_x\partial^{\alpha_2-\beta}_y\left((1 + g)^2 \partial_y^2 \left(\langle \delta \tilde y \rangle^{-1}\phi \right)\right), \partial^\alpha\phi\right)_{L^2(\mathbb R^2)}\\
    &=I_{2, 1}+I_{2, 2}.
\end{align*}
For the term $I_{2, 1}$. By the Leibniz formula, one has
\begin{align*}
    I_{2, 1}&=-\sum_{|\alpha|\le4}\left(\partial^{\alpha}\left((1 + g)^2 \partial_y^2 \left(\langle \delta \tilde y \rangle^{-1}\phi \right)\right), \langle \delta \tilde y \rangle^{-1}\partial^{\alpha_1}_x\partial^{\alpha_2}_y(\langle \delta \tilde y \rangle\langle \delta \tilde y \rangle^{-1}\phi)\right)_{L^2(\mathbb R^2)}\\
    &=-\sum_{|\alpha|\le4}\left(\partial^{\alpha}\left((1 + g)^2 \partial_y^2 \left(\langle \delta \tilde y \rangle^{-1}\phi \right)\right), \partial^{\alpha}(\langle \delta \tilde y \rangle^{-1}\phi)\right)_{L^2(\mathbb R^2)}\\
    &\quad-\sum_{|\alpha|\le4}\sum_{0<\beta\le\alpha_2}\frac{\alpha_2!}{\beta!(\alpha_2-\beta)!}\left(\partial^{\alpha}\left((1 + g)^2 \partial_y^2 \left(\langle \delta \tilde y \rangle^{-1}\phi \right)\right), \langle \delta \tilde y \rangle^{-1}\partial^\beta_y\langle \delta \tilde y \rangle\partial^{\alpha_1}_x\partial^{\alpha_2-\beta}_y(\langle \delta \tilde y \rangle^{-1}\phi)\right)_{L^2(\mathbb R^2)} \\
    &=I_{2, 1, 1} + I_{2, 1, 2}.
\end{align*}
For the term $I_{2, 1, 1}$, we can divide it into 
\begin{align*}
    I_{2, 1, 1}&=-\sum_{|\alpha|\le4}\left(\partial^{\alpha}\partial_y\left((1 + g)^2 \partial_y \left(\langle \delta \tilde y \rangle^{-1}\phi \right)\right), \partial^{\alpha}(\langle \delta \tilde y \rangle^{-1}\phi)\right)_{L^2(\mathbb R^2)}\\
    &+\sum_{|\alpha|\le4}\left(\partial^{\alpha}\left(\partial_y(1 + g)^2 \partial_y \left(\langle \delta \tilde y \rangle^{-1}\phi \right)\right), \partial^{\alpha}(\langle \delta \tilde y \rangle^{-1}\phi)\right)_{L^2(\mathbb R^2)}\\
    &=\sum_{|\alpha|\le4}\left(\partial^{\alpha}\left((1 + g)^2 \partial_y \left(\langle \delta \tilde y \rangle^{-1}\phi \right)\right), \partial^{\alpha}\partial_y(\langle \delta \tilde y \rangle^{-1}\phi)\right)_{L^2(\mathbb R^2)}\\
    &+\sum_{|\alpha|\le4}\left(\partial^{\alpha}\left(\partial_y(1 + g)^2 \partial_y \left(\langle \delta \tilde y \rangle^{-1}\phi \right)\right), \partial^{\alpha}(\langle \delta \tilde y \rangle^{-1}\phi)\right)_{L^2(\mathbb R^2)},
\end{align*}
from Leibniz formula 
\begin{align*}
    &\sum_{|\alpha|\le4}\left(\partial^{\alpha}\left((1 + g)^2 \partial_y \left(\langle \delta \tilde y \rangle^{-1}\phi \right)\right), \partial^{\alpha}\partial_y(\langle \delta \tilde y \rangle^{-1}\phi)\right)_{L^2(\mathbb R^2)}\\
    &=\sum_{|\alpha|\le4}\left((1 + g)^2 \partial^{\alpha}\partial_y \left(\langle \delta \tilde y \rangle^{-1}\phi \right), \partial^{\alpha}\partial_y(\langle \delta \tilde y \rangle^{-1}\phi)\right)_{L^2(\mathbb R^2)}\\
    &\quad+\sum_{|\alpha|\le4}\sum_{0<\beta\le\alpha}\frac{\alpha!}{\beta!(\alpha-\beta)!}\left(\partial^{\beta}(1 + g)^2 \partial^{\alpha-\beta}\partial_y \left(\langle \delta \tilde y \rangle^{-1}\phi \right), \partial^{\alpha}\partial_y(\langle \delta \tilde y \rangle^{-1}\phi)\right)_{L^2(\mathbb R^2)}.
\end{align*}
Using the assumption of  $\|g\|_{L^\infty([0, T]; H^4(\mathbb{R}^2))}\leq \varepsilon_0 $, we have
$$
(1+g)^2 \ge \frac 14.
$$
Then
\begin{align*}
    \sum_{|\alpha|\le4}\left((1 + g)^2 \partial^{\alpha}\partial_y \left(\langle \delta \tilde y \rangle^{-1}\phi \right), \partial^{\alpha}\partial_y(\langle \delta \tilde y \rangle^{-1}\phi)\right)_{L^2(\mathbb R^2)}&\ge\frac14\sum_{|\alpha|\le4}\left( \partial^{\alpha}\partial_y \left(\langle \delta \tilde y \rangle^{-1}\phi \right), \partial^{\alpha}\partial_y(\langle \delta \tilde y \rangle^{-1}\phi)\right)_{L^2(\mathbb R^2)}\\
    &=\frac14\left\|\partial_y\left(\langle \delta \tilde y \rangle^{-1}\phi\right)\right\|^2_{H^4(\mathbb R^2)}.
\end{align*}
The Sobolev embedding $H^{1+}(\mathbb R^2)\subset L^\infty(\mathbb R^2)$ gives that 
\begin{align}\label{ineq1}
    \sum_{|\alpha|\le 2}\|\partial^\alpha (1+g)^2\| _{L^\infty([0, T]; L^\infty(\mathbb{R}^2))}\le C_0\|g\|^2_{L^\infty([0, T]; H^4(\mathbb{R}^2))}\le C_0\epsilon^2_0.
\end{align}
So that, we consider the following two cases separately
\begin{align*}
    &\left|\left(\partial^{\beta}(1 + g)^2 \partial^{\alpha-\beta}\partial_y \left(\langle \delta \tilde y \rangle^{-1}\phi \right), \partial^{\alpha}\partial_y(\langle \delta \tilde y \rangle^{-1}\phi)\right)_{L^2(\mathbb R^2)}\right|\\
    &\le\left\{
\begin{array}{ll}
   \left\|\partial^{\beta}(1 + g)^2 \right\|_{L^\infty(\mathbb R^2)}\left\|\partial^{\alpha-\beta}\partial_y \left(\langle \delta \tilde y \rangle^{-1}\phi\right)\right\|_{L^2(\mathbb R^2)} \left\|\partial^{\alpha}\partial_y(\langle \delta \tilde y \rangle^{-1}\phi)\right\|_{L^2(\mathbb R^2)}, \ 1\le|\beta|\le2,\\
   \\
   \left\|\partial^{\beta}(1 + g)^2 \right\|_{L^2(\mathbb R^2)}\left\|\partial^{\alpha-\beta}\partial_y \left(\langle \delta \tilde y \rangle^{-1}\phi\right)\right\|_{L^\infty(\mathbb R^2)} \left\|\partial^{\alpha}\partial_y(\langle \delta \tilde y \rangle^{-1}\phi)\right\|_{L^2(\mathbb R^2)}, \ 3\le|\beta|\le4,
\end{array}
\right.
\end{align*}
thus we can deduce
\begin{align*}
    &\left|\sum_{|\alpha|\le4}\sum_{0<\beta\le\alpha}\frac{\alpha!}{\beta!(\alpha-\beta)!}\left(\partial^{\beta}(1 + g)^2 \partial^{\alpha-\beta}\partial_y \left(\langle \delta \tilde y \rangle^{-1}\phi \right), \partial^{\alpha}\partial_y(\langle \delta \tilde y \rangle^{-1}\phi)\right)_{L^2(\mathbb R^2)}.\right|\\
    &\le\frac{1}{32}\left\|\partial_y\left(\langle \delta \tilde y \rangle^{-1}\phi\right)\right\|^2_{H^4(\mathbb R^2)}+\tilde C_0\|g\|^4_{L^\infty([0, T]; H^4(\mathbb{R}^2))}\left\|\langle \delta \tilde y \rangle^{-1}\phi\right\|^2_{H^4}.
\end{align*}
We now estimate the second term in $I_{2, 1, 1}$, we first divide it into following two terms
\begin{align*}
    &\sum_{|\alpha|\le4}\left(\partial^{\alpha}\left(\partial_y(1 + g)^2 \partial_y \left(\langle \delta \tilde y \rangle^{-1}\phi \right)\right), \partial^{\alpha}(\langle \delta \tilde y \rangle^{-1}\phi)\right)_{L^2(\mathbb R^2)}\\
    &=\left(\partial_y(1 + g)^2 \partial_y \left(\langle \delta \tilde y \rangle^{-1}\phi \right), \langle \delta \tilde y \rangle^{-1}\phi\right)_{L^2(\mathbb R^2)}\\
    &\quad+\sum_{0<|\alpha|\le4}\left(\partial^{\alpha}\left(\partial_y(1 + g)^2 \partial_y \left(\langle \delta \tilde y \rangle^{-1}\phi \right)\right), \partial^{\alpha}(\langle \delta \tilde y \rangle^{-1}\phi)\right)_{L^2(\mathbb R^2)}.
\end{align*}
Using \eqref{ineq1}, one has
\begin{align*}
    &\left|\left(\partial_y(1 + g)^2 \partial_y \left(\langle \delta \tilde y \rangle^{-1}\phi \right), \langle \delta \tilde y \rangle^{-1}\phi\right)_{L^2(\mathbb R^2)}\right|\\
    &\le\left\|\partial_y(1 + g)^2 \right\|_{L^\infty(\mathbb R^2)}\left\|\partial_y \left(\langle \delta \tilde y \rangle^{-1}\phi \right)\right\|_{L^2(\mathbb R^2)}\left\|\langle \delta \tilde y \rangle^{-1}\phi\right\|_{L^2(\mathbb R^2)}\\
    &\le\left\|g\right\|_{H^4(\mathbb R^2)}\left\|\partial_y \left(\langle \delta \tilde y \rangle^{-1}\phi \right)\right\|_{L^2(\mathbb R^2)}\left\|\langle \delta \tilde y \rangle^{-1}\phi\right\|_{L^2(\mathbb R^2)}.
\end{align*}
For the case of $|\alpha|\ne0$, the calculation directly yields
\begin{align*}
    &\sum_{0<|\alpha|\le4}\left(\partial^{\alpha}\left(\partial_y(1 + g)^2 \partial_y \left(\langle \delta \tilde y \rangle^{-1}\phi \right)\right), \partial^{\alpha}(\langle \delta \tilde y \rangle^{-1}\phi)\right)_{L^2(\mathbb R^2)}\\
    &=\sum_{0<|\alpha|\le4}\sum_{\beta\le\alpha}\frac{\alpha!}{\beta!(\alpha-\beta)!}\left(\partial^{\beta}\partial_y(1 + g)^2 \partial^{\alpha-\beta}\partial_y \left(\langle \delta \tilde y \rangle^{-1}\phi \right), \partial^{\alpha}(\langle \delta \tilde y \rangle^{-1}\phi)\right)_{L^2(\mathbb R^2)},
\end{align*}
due to the Sobolev embedding $H^{1+}(\mathbb R^2)\subset L^\infty(\mathbb R^2)$, we divide the scalar product in the above line into the following two cases
\begin{align*}
    &\left|\left(\partial^{\beta}\partial_y(1 + g)^2 \partial^{\alpha-\beta}\partial_y \left(\langle \delta \tilde y \rangle^{-1}\phi \right), \partial^{\alpha}(\langle \delta \tilde y \rangle^{-1}\phi)\right)_{L^2(\mathbb R^2)}\right|\\
    &\le\left\{
\begin{array}{ll}
   \left\|\partial^{\beta}\partial_y(1 + g)^2\right\|_{L^\infty(\mathbb R^2)} \left\|\partial^{\alpha-\beta}\partial_y \left(\langle \delta \tilde y \rangle^{-1}\phi \right)\right\|_{L^2(\mathbb R^2)}\left\| \partial^{\alpha}(\langle \delta \tilde y \rangle^{-1}\phi)\right\|_{L^2(\mathbb R^2)}, \ |\beta|\le1,\\
   \\
   \left\|\partial^{\beta}\partial_y(1 + g)^2\right\|_{L^2(\mathbb R^2)} \left\|\partial^{\alpha-\beta}\partial_y \left(\langle \delta \tilde y \rangle^{-1}\phi \right)\right\|_{L^\infty(\mathbb R^2)}\left\| \partial^{\alpha}(\langle \delta \tilde y \rangle^{-1}\phi)\right\|_{L^2(\mathbb R^2)}, \ 2
   \le|\beta|\le4,
\end{array}
\right.
\end{align*}
then follows from \eqref{ineq1} that 
\begin{align*}
    &\left|\sum_{0<|\alpha|\le4}\left(\partial^{\alpha}\left(\partial_y(1 + g)^2 \partial_y \left(\langle \delta \tilde y \rangle^{-1}\phi \right)\right), \partial^{\alpha}(\langle \delta \tilde y \rangle^{-1}\phi)\right)_{L^2(\mathbb R^2)}\right|\\
    &\le\frac{1}{32}\left\|\partial_y\left(\langle \delta \tilde y \rangle^{-1}\phi\right)\right\|^2_{H^4(\mathbb R^2)}+\tilde C_0\|g\|^4_{L^\infty([0, T]; H^4(\mathbb{R}^2))}\left\|\langle \delta \tilde y \rangle^{-1}\phi\right\|^2_{H^4}.
\end{align*}

For the term $I_{2, 1, 2}$, we first rewrite it as follows, 
\begin{align*}
    &\left(\partial^{\alpha}\left((1 + g)^2 \partial_y^2 \left(\langle \delta \tilde y \rangle^{-1}\phi \right)\right), \langle \delta \tilde y \rangle^{-1}\partial^\beta_y\langle \delta \tilde y \rangle\partial^{\alpha_1}_x\partial^{\alpha_2-\beta}_y(\langle \delta \tilde y \rangle^{-1}\phi)\right)_{L^2(\mathbb R^2)}\\
    &=-\left(\partial^{\alpha}\left((1 + g)^2 \partial_y \left(\langle \delta \tilde y \rangle^{-1}\phi \right)\right), \partial_y\left(\langle \delta \tilde y \rangle^{-1}\partial^\beta_y\langle \delta \tilde y \rangle\partial^{\alpha_1}_x\partial^{\alpha_2-\beta}_y(\langle \delta \tilde y \rangle^{-1}\phi)\right)\right)_{L^2(\mathbb R^2)}\\
    &\qquad -\left(\partial^{\alpha}\left(\partial_y(1 + g)^2 \partial_y \left(\langle \delta \tilde y \rangle^{-1}\phi \right)\right), \langle \delta \tilde y \rangle^{-1}\partial^\beta_y\langle \delta \tilde y \rangle\partial^{\alpha_1}_x\partial^{\alpha_2-\beta}_y(\langle \delta \tilde y \rangle^{-1}\phi)\right)_{L^2(\mathbb R^2)}.
\end{align*}
Since $\left\|g\right\|_{L^{\infty}([0, T]; H^{4}(\mathbb R^{2}))}\le\varepsilon_0$ and $\left|\partial_y\left(\langle \delta \tilde y \rangle^{-1}\partial^\beta_y\langle \delta \tilde y \rangle\right)\right|+\left|\langle \delta \tilde y \rangle^{-1}\partial^\beta_y\langle \delta \tilde y \rangle\right|\lesssim1$ for $\beta\ge1$, one has
\begin{align*}
    &\left|\left(\partial^{\alpha}\left((1 + g)^2 \partial_y \left(\langle \delta \tilde y \rangle^{-1}\phi \right)\right), \partial_y\left(\langle \delta \tilde y \rangle^{-1}\partial^\beta_y\langle \delta \tilde y \rangle\partial^{\alpha_1}_x\partial^{\alpha_2-\beta}_y(\langle \delta \tilde y \rangle^{-1}\phi)\right)\right)_{L^2(\mathbb R^2)}\right|\\
    &\le\sum_{\sigma\le\alpha}\frac{\alpha!}{\sigma!(\alpha-\sigma)!}\left|\left(\partial^{\sigma}(1 + g)^2 \partial_y \partial^{\alpha-\sigma}\left(\langle \delta \tilde y \rangle^{-1}\phi \right), \partial_y\left(\langle \delta \tilde y \rangle^{-1}\partial^\beta_y\langle \delta \tilde y \rangle\right)\partial^{\alpha_1}_x\partial^{\alpha_2-\beta}_y(\langle \delta \tilde y \rangle^{-1}\phi)\right)_{L^2(\mathbb R^2)}\right|\\
    &\qquad+\sum_{\sigma\le\alpha}\frac{\alpha!}{\sigma!(\alpha-\sigma)!}\left|\left(\partial^{\sigma}(1 + g)^2 \partial_y \partial^{\alpha-\sigma}\left(\langle \delta \tilde y \rangle^{-1}\phi \right), \langle \delta \tilde y \rangle^{-1}\partial^\beta_y\langle \delta \tilde y \rangle\partial^{\alpha_1}_x\partial^{\alpha_2-\beta+1}_y(\langle \delta \tilde y \rangle^{-1}\phi))\right)_{L^2(\mathbb R^2)}\right|\\
    &\le C_4\left\{
\begin{array}{ll}
	\sum_{\sigma\le\alpha}\left\|\partial^{\sigma}(1 + g)^2\right\|_{L^\infty(\mathbb R^2)}\|\partial_y \partial^{\alpha-\sigma}\left(\langle \delta \tilde y \rangle^{-1}\phi \right)\|_{L^2(\mathbb R^2)}\\
    \qquad\qquad\times\left(\|\partial^{\alpha_1}_x\partial^{\alpha_2-\beta}_y(\langle \delta \tilde y \rangle^{-1}\phi)\|_{L^2(\mathbb R^2)}+\|\partial^{\alpha_1}_x\partial^{\alpha_2-\beta+1}_y(\langle \delta \tilde y \rangle^{-1}\phi)\|_{L^2(\mathbb R^2)}\right),\quad |\sigma|\le2,\\
    \\
	\sum_{\sigma\le\alpha}\left\|\partial^{\sigma}(1 + g)^2\right\|_{L^2(\mathbb R^2)}\|\partial_y \partial^{\alpha-\sigma}\left(\langle \delta \tilde y \rangle^{-1}\phi \right)\|_{L^\infty(\mathbb R^2)}\\
    \qquad\qquad\times\left(\|\partial^{\alpha_1}_x\partial^{\alpha_2-\beta}_y(\langle \delta \tilde y \rangle^{-1}\phi)\|_{L^2(\mathbb R^2)}+\|\partial^{\alpha_1}_x\partial^{\alpha_2-\beta+1}_y(\langle \delta \tilde y \rangle^{-1}\phi)\|_{L^2(\mathbb R^2)}\right), \quad 3\le|\sigma|\le4.
\end{array}
\right.
\end{align*}
For $|\alpha|\ne0$, there exists there exist a $\gamma\in\mathbb N^2$ such that $|\gamma|=1$ and $\gamma\le\alpha$, so that we have
\begin{align*}
    &\left|\left(\partial^{\alpha}\left(\partial_y(1 + g)^2 \partial_y \left(\langle \delta \tilde y \rangle^{-1}\phi \right)\right), \langle \delta \tilde y \rangle^{-1}\partial^\beta_y\langle \delta \tilde y \rangle\partial^{\alpha_1}_x\partial^{\alpha_2-\beta}_y(\langle \delta \tilde y \rangle^{-1}\phi)\right)_{L^2(\mathbb R^2)}\right|\\
    &=\left|\left(\partial^{\alpha-\gamma}\left(\partial_y(1 + g)^2 \partial_y \left(\langle \delta \tilde y \rangle^{-1}\phi \right)\right), \partial^\gamma\left(\langle \delta \tilde y \rangle^{-1}\partial^\beta_y\langle \delta \tilde y \rangle\partial^{\alpha_1}_x\partial^{\alpha_2-\beta}_y(\langle \delta \tilde y \rangle^{-1}\phi)\right)\right)_{L^2(\mathbb R^2)}\right|\\
    &\le\sum_{\sigma\le\alpha-\gamma}\frac{(\alpha-\gamma)!}{\sigma!(\alpha-\gamma-\sigma)!}\left|\left(\partial^\sigma\partial_y(1 + g)^2\partial^{\alpha-\gamma-\sigma}\partial_y \left(\langle \delta \tilde y \rangle^{-1}\phi \right), \partial^\gamma\left(\langle \delta \tilde y \rangle^{-1}\partial^\beta_y\langle \delta \tilde y \rangle\partial^{\alpha_1}_x\partial^{\alpha_2-\beta}_y(\langle \delta \tilde y \rangle^{-1}\phi)\right)\right)_{L^2(\mathbb R^2)}\right|\\
    &\le C_4\left\{
\begin{array}{ll}
	\sum_{\sigma\le\alpha}\left\|\partial^{\sigma}(1 + g)^2\right\|_{L^\infty(\mathbb R^2)}\|\partial_y \partial^{\alpha-\sigma}\left(\langle \delta \tilde y \rangle^{-1}\phi \right)\|_{L^2(\mathbb R^2)}\\
    \qquad\qquad\times\left(\|\partial^{\alpha_1}_x\partial^{\alpha_2-\beta}_y(\langle \delta \tilde y \rangle^{-1}\phi)\|_{L^2(\mathbb R^2)}+\|\partial^{\alpha_1}_x\partial^{\alpha_2-\beta+1}_y(\langle \delta \tilde y \rangle^{-1}\phi)\|_{L^2(\mathbb R^2)}\right),\quad |\sigma|\le2,\\
    \\
	\sum_{\sigma\le\alpha}\left\|\partial^{\sigma}(1 + g)^2\right\|_{L^2(\mathbb R^2)}\|\partial_y \partial^{\alpha-\sigma}\left(\langle \delta \tilde y \rangle^{-1}\phi \right)\|_{L^\infty(\mathbb R^2)}\\
    \qquad\qquad\times\left(\|\partial^{\alpha_1}_x\partial^{\alpha_2-\beta}_y(\langle \delta \tilde y \rangle^{-1}\phi)\|_{L^2(\mathbb R^2)}+\|\partial^{\alpha_1}_x\partial^{\alpha_2-\beta+1}_y(\langle \delta \tilde y \rangle^{-1}\phi)\|_{L^2(\mathbb R^2)}\right), \quad 3\le|\sigma|\le4,
\end{array}
\right.
\end{align*}
here we also use $\left|\partial_y\left(\langle \delta \tilde y \rangle^{-1}\partial^\beta_y\langle \delta \tilde y \rangle\right)\right|+\left|\langle \delta \tilde y \rangle^{-1}\partial^\beta_y\langle \delta \tilde y \rangle\right|\lesssim1$ for all $|\beta|\ge1$. By Sobolev embedding theorem, we have
\begin{align*}
    |I_{2, 1, 2}|&\leq\left|\sum_{|\alpha|\le4}\sum_{0<\beta\le\alpha_2}\frac{\alpha_2!}{\beta!(\alpha_2-\beta)!}\left(\partial^{\alpha}\left((1 + g)^2 \partial_y^2 \left(\langle \delta \tilde y \rangle^{-1}\phi \right)\right), \langle \delta \tilde y \rangle^{-1}\partial^\beta_y\langle \delta \tilde y \rangle\partial^{\alpha_1}_x\partial^{\alpha_2-\beta}_y(\langle \delta \tilde y \rangle^{-1}\phi)\right)_{L^2(\mathbb R^2)}\right|\\
    &\le\tilde C_4\|g\|^2_{L^\infty([0, T]; H^4(\mathbb{R}^2))}\left\|\partial_y \left(\langle \delta \tilde y \rangle^{-1}\phi \right)\right\|_{H^4}\left\|\langle \delta \tilde y \rangle^{-1}\phi\right\|_{H^4}\\
    &\le\frac{1}{16}\left\|\partial_y\left(\langle \delta \tilde y \rangle^{-1}\phi\right)\right\|^2_{H^4(\mathbb R^2)}+4(\tilde C_4)^2\|g\|^4_{L^\infty([0, T]; H^4(\mathbb{R}^2))}\left\|\langle \delta \tilde y \rangle^{-1}\phi\right\|^2_{H^4}.
\end{align*}
Similarly, for the term $I_{2, 2}$, we can deduce that 
\begin{align*}
    \left|I_{2, 2}\right|\le \frac{1}{16}\left\|\partial_y\left(\langle \delta \tilde y \rangle^{-1}\phi\right)\right\|^2_{H^4(\mathbb R^2)}+4(C_5)^2\|g\|^4_{L^\infty([0, T]; H^4(\mathbb{R}^2))}\left\|\langle \delta \tilde y \rangle^{-1}\phi\right\|^2_{H^4}
\end{align*}
Combining all the estimates of $I_1$ and $I_2$, we have 
\begin{align*}
    &\left|\left(\phi, P\phi\right)_{H^4(\mathbb R^2)}\right|
    \ge-\frac12\frac{d}{dt}\left\|\phi(t)\right\|^2_{H^4(\mathbb R^2)}+\frac{1}{16}\left\|\partial_y\left(\langle \delta \tilde y \rangle^{-1}\phi\right)\right\|^2_{H^4(\mathbb R^2)}\\
    &\quad-\left\{4\left[\tilde C_0+(\tilde C_4)^2+(C_5)^2\right]\|g\|^4_{L^\infty([0, T]; H^4(\mathbb{R}^2))}+\tilde C_1+\tilde C_2+C_3\right\}\left\|\langle \delta \tilde y \rangle^{-1}\phi\right\|^2_{H^4},
\end{align*}
then
\begin{align*}
    &-\frac12\frac{d}{dt}\left\|\phi(t)\right\|^2_{H^4(\mathbb R^2)}+\frac{1}{16}\left\|\partial_y\left(\langle \delta \tilde y \rangle^{-1}\phi\right)\right\|^2_{H^4(\mathbb R^2)}\le\tilde C_5\left\|\phi\right\|^2_{H^4}+\left\|\phi\right\|_{H^4(\mathbb R^2)} \left\|P\phi\right\|_{H^4(\mathbb R^2)},
\end{align*}
where $\tilde C_5=4\left[\tilde C_0+(\tilde C_4)^2+(C_5)^2\right]+\tilde C_1+\tilde C_2+C_3$, this is equivalent to
\begin{align*}
    &-\frac{d}{dt}\left(e^{2\tilde C_5t}\left\|\phi(t)\right\|^2_{H^4(\mathbb R^2)}\right)+\frac18 e^{2\tilde C_5t}\left\|\partial_y\left(\langle \delta \tilde y \rangle^{-1}\phi\right)\right\|^2_{H^4(\mathbb R^2)}\le2e^{2\tilde C_5t}\left\|\phi\right\|_{H^4(\mathbb R^2)} \left\|P\phi\right\|_{H^4(\mathbb R^2)}.
\end{align*}
Integrating the above inequality from $t$ to $T$, since $\partial^\alpha\phi(T)=0, \forall \alpha\in\mathbb N^2$, we have
\begin{align*}
    &\left\|\phi(t)\right\|^2_{H^4(\mathbb R^2)}+\frac18\int_t^Te^{2\tilde C_5(s-t)}\left\|\partial_y\left(\langle \delta \tilde y \rangle^{-1}\phi\right)(s)\right\|^2_{H^4(\mathbb R^2)}ds\\
    &\le2e^{2\tilde C_5T}\int_t^T\left\|\phi(s)\right\|_{H^4(\mathbb R^2)} \left\|P\phi(s)\right\|_{H^4(\mathbb R^2)}ds,
\end{align*}
which implies 
\begin{align}\label{bound1}
    &\left\|\phi\right\|_{L^\infty([0, T]; H^4(\mathbb R^2))}\le2e^{2\tilde C_5T}\left\|P\phi\right\|_{L^1([0, T]; H^4(\mathbb R^2))}.
\end{align}
Let 
$$E=\left\{P\phi: \ \phi\in C^\infty([0, T]; C_c^\infty(\mathbb R^2)),\ \partial^\alpha\phi(T)=0, \forall \alpha\in\mathbb N^2 \right\}\subset L^1([0, T]; H^4(\mathbb R^2)).$$
Define a linear functioinal 
\begin{align*}
    &\mathcal T: E\mapsto \mathbb R\\
    &F=P\phi\to \left(v_{\delta}(0), \phi(0)\right)_{H^4(\mathbb R^2)}.
\end{align*}
If $P\phi_1=P\phi_2$ with $\phi_1, \phi_2\in C^\infty([0, T]; C_c^\infty(\mathbb R^2))$ and $\partial^\alpha\phi_1(T)=\partial^\alpha\phi_2(T)=0$, then from \eqref{bound1}
$$\left\|\phi_1-\phi_2\right\|_{L^\infty([0, T]; H^4(\mathbb R^2))}\le2e^{2\tilde C_5T}\left\|P(\phi_1-\phi_2)\right\|_{L^1([0, T]; H^4(\mathbb R^2))}=0.$$
Hence, the operator $P$ is injective. Therefore,
the linear functional $\mathcal T$ is well-defined,
\begin{align*}
   \left|\mathcal T(P\phi)\right|\le\|v_\delta(0)\|_{H^4}\left\|\phi\right\|_{L^\infty([0, T]; H^4(\mathbb R^2))}\le2e^{2\tilde C_5T}\|v_\delta(0)\|_{H^4}\left\|P\phi\right\|_{L^1([0, T]; H^4(\mathbb R^2))}.
\end{align*}
Thus, $\mathcal T$ is bounded on $\left(E, L^1([0, T]; H^4(\mathbb R^2))\right)$. From the Hahn-Banach extension Theorem, the operator $\mathcal T$ can be extended to a linear functional $\tilde{\mathcal T}$ on $L^1([0, T]; H^4(\mathbb R^2))$ such that
\begin{align*}
    \left|\tilde{\mathcal T}(F)\right|\le2e^{2\tilde C_5T}\|v_\delta(0)\|_{H^4}\left\|F\right\|_{L^1([0, T]; H^4(\mathbb R^2))}, \quad \forall F\in L^1([0, T]; H^4(\mathbb R^2)).
\end{align*}
So that in the end, it follows from the Riesz representation theorem that there exists a unique weak function $v_\delta\in L^\infty([0, T]; H^4(\mathbb R^2))$ of the Cauchy problem \eqref{equ-m}.

\bigskip

\textbf{Uniform bounded:} \ Taking scalar product in $H^4(\mathbb{R}^2)$ with $v_\delta$ on both side of \eqref{equ-m}, we have
\begin{align*}
    \frac{1}{2}\frac{d}{dt}\|v_\delta\|^2_{H^4(\mathbb{R}^2)}+\left( \langle \delta \tilde y \rangle^{-1}y \partial_x \left(\langle \delta \tilde y \rangle^{-1}v_\delta\right), v_\delta\right)_{H^4(\mathbb{R}^2)} - \left(\langle \delta \tilde y \rangle^{-1}(1 + g)^2 \partial_y^2 \left(\langle \delta \tilde y \rangle^{-1}v_\delta \right), v_\delta\right)_{H^4(\mathbb{R}^2)}= 0.
\end{align*}
We observe that $\langle\delta \tilde y \rangle^{-1}v_\delta\to0$ as $|x|, |y|\to\infty$, from this point, the proof is essentially the same as in the previous term $I_1$ and $I_2$, and we can deduce 
\begin{align*}
    \left( \langle \delta \tilde y \rangle^{-1}y \partial_x \left(\langle \delta \tilde y \rangle^{-1}v_\delta\right), v_\delta\right)_{H^4(\mathbb{R}^2)}\le C_6\left\|\langle \delta \tilde y \rangle^{-1}v_\delta\right\|^2_{H^4(\mathbb R^2)},
\end{align*}
and
\begin{align*}
    -\left(\langle \delta \tilde y \rangle^{-1}(1 + g)^2 \partial_y^2 \left(\langle \delta \tilde y \rangle^{-1}v_\delta \right), v_\delta\right)_{H^4(\mathbb{R}^2)}\ge\frac{1}{8}\left\|\partial_y\left(\langle \delta \tilde y \rangle^{-1}v_\delta\right)\right\|^2_{H^4(\mathbb R^2)}-C_7\left\|\langle \delta \tilde y \rangle^{-1}v_\delta\right\|^2_{H^4(\mathbb R^2)},
\end{align*}
thus
\begin{align*}
    \frac{d}{dt}\|v_\delta\|^2_{H^4(\mathbb{R}^2)} + \frac{1}{4}\left\|\partial_y\left(\langle \delta \tilde y \rangle^{-1}v_\delta\right)\right\|^2_{H^4(\mathbb R^2)} &\le 2(C_6+C_7)\left\|\langle \delta \tilde y \rangle^{-1}v_\delta\right\|^2_{H^4(\mathbb R^2)}\\
    &\le 2(C_6+C_7)\left\|v_\delta\right\|^2_{H^4(\mathbb R^2)}, 
\end{align*}
We now integrate the above inequality from 0 to $t$ to obtain: 
\begin{align*}
    \left\|v_\delta(t)\right\|^2_{H^4(\mathbb{R}^2)} + \frac{1}{4}\int_0^t\left\|\partial_y\left(\langle \delta \tilde y \rangle^{-1}v_\delta\right)(s)\right\|^2_{H^4(\mathbb R^2)}ds\le 2(C_6+C_7)\int_0^t\left\|v_\delta(s)\right\|^2_{H^4(\mathbb R^2)}ds+\left\|v_\delta(0)\right\|^2_{H^4(\mathbb{R}^2)}.
\end{align*}
By Gronwall's inequality, we have 
\begin{align*}
    \left\|v_\delta(t)\right\|^2_{H^4(\mathbb{R}^2)} \le 
    \left(1+2(C_6+C_7)Te^{2(C_6+C_7)T}\right)\left\|v_\delta(0)\right\|^2_{H^4(\mathbb{R}^2)}, \qquad \forall \ t\in[0, T],
\end{align*}
which leads to for all $0<t\le T$
\begin{align*}
    \left\|v_\delta(t)\right\|^2_{H^4(\mathbb{R}^2)} + \frac{1}{4}\int_0^t\left\|\partial_y\left(\langle \delta \tilde y \rangle^{-1}v_\delta\right)(s)\right\|^2_{H^4(\mathbb R^2)}ds&\le \left(1+2(C_6+C_7)Te^{2(C_6+C_7)T}\right)^2\left\|v_\delta(0)\right\|^2_{H^4(\mathbb{R}^2)}\\
    &\le B_0\left\|v_0\right\|^2_{H^4(\mathbb{R}^2)},
\end{align*}
where the constant $B_0=\left(1+2(C_6+C_7)Te^{2(C_6+C_7)T}\right)^2$ is independent of $0<\delta\ll1$.
\end{proof}

\begin{remark}\label{C-infty}
If there exists a sufficiently small constant $\varepsilon_0>0$ such that 
\begin{align*}
     \left\|g\right\|_{C^{\infty}([0, T]; H^{4}(\mathbb R^{2}))}\le\varepsilon_0,\ \   \ \ \ \ 
     \|v_0\|_{H^4(\mathbb R^{2})}\le \varepsilon_0.
\end{align*}
We can show the $C^k$ in time by using induction on the index $k \in \mathbb{N}$. 
\end{remark}

\section{Local existence and energy estimate}\label{existence}

To obtain the existence of the original system \eqref{2-1}, we first consider the following linearized equation
\begin{equation}\label{2-2}
\begin{cases}
\partial_t v + y\partial_x v - (1+g)^2 \partial_{y}^2 v = 0, \ \  (x, y) \in \mathbb{R}^2,\ \  t > 0, \\
v \big|_{t = 0} = v_0(x, y). 
\end{cases}
\end{equation}
Due to the uniform bounded of modified equation, we may assume the initial datum $v_0$ and $g$ satisfy \eqref{g small}. We will establish the existence of \eqref{2-2}, by taking the limit as $\delta\to0+$, utilizing the uniform estimates established in Proposition \ref{prop2.1-0}. 
We first give a basic property of reflexive spaces.
\begin{lemma}(\cite{Brezis})(Kakutani Theorem)\label{Kakutani}
    Let $X$ be a Banach space. Then $X$ is reflexive if and only if 
    $$B_E=\{x\in X: \|x\|_X\le1\},$$
    is compact in the weak topology $(X, X^*)$.
\end{lemma}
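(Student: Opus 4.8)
The plan is to deduce the statement from the Banach--Alaoglu theorem together with Goldstine's density lemma. Throughout, write $J\colon X\to X^{**}$ for the canonical isometric embedding, so that $\langle Jx,f\rangle=\langle f,x\rangle$ for $x\in X$ and $f\in X^*$, and recall that by definition $X$ is reflexive precisely when $J$ is surjective. The preliminary observation, needed in both directions and valid with no hypothesis on $X$, is that $J$ is a homeomorphism from $(X,\sigma(X,X^*))$ onto $J(X)$ equipped with the subspace topology inherited from $\sigma(X^{**},X^*)$: a net $x_\alpha$ converges to $x$ weakly in $X$ iff $\langle f,x_\alpha\rangle\to\langle f,x\rangle$ for every $f\in X^*$, which is exactly $\sigma(X^{**},X^*)$-convergence of $Jx_\alpha$ to $Jx$. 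This identification is what lets one transport compactness back and forth between $X$ and $X^{**}$.

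For the implication ``$X$ reflexive $\Rightarrow$ $B_E$ weakly compact'': since $X^{**}=(X^*)^*$, the Banach--Alaoglu theorem gives that the closed unit ball $B_{X^{**}}$ is compact for $\sigma(X^{**},X^*)$. Because $J$ is a surjective isometry we have $J(B_E)=B_{X^{**}}$, hence $B_E=J^{-1}(B_{X^{**}})$; as $J^{-1}$ is $\sigma(X^{**},X^*)$-to-$\sigma(X,X^*)$ continuous by the previous paragraph, $B_E$ is the continuous image of a compact set and therefore $\sigma(X,X^*)$-compact.

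For the converse, the key input is Goldstine's lemma: $J(B_E)$ is $\sigma(X^{**},X^*)$-dense in $B_{X^{**}}$. I would prove it by a finite-dimensional separation argument. Given $\xi\in B_{X^{**}}$ and a basic neighbourhood of $\xi$ determined by $f_1,\dots,f_n\in X^*$ and $\varepsilon>0$, set $\alpha_i=\langle\xi,f_i\rangle$ and consider the linear map $\varphi\colon X\to\mathbb R^n$, $\varphi(x)=(\langle f_1,x\rangle,\dots,\langle f_n,x\rangle)$. If $(\alpha_i)_{i}$ did not lie in $\overline{\varphi(B_E)}$, strict separation in $\mathbb R^n$ would produce $\beta\in\mathbb R^n$ with $\sup_{x\in B_E}\sum_i\beta_i\langle f_i,x\rangle<\sum_i\beta_i\alpha_i$; but the left-hand side equals $\bigl\|\sum_i\beta_if_i\bigr\|_{X^*}$ while the right-hand side is $\langle\xi,\sum_i\beta_if_i\rangle\le\bigl\|\sum_i\beta_if_i\bigr\|_{X^*}$, a contradiction. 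Hence there is $x\in B_E$ with $|\langle f_i,x\rangle-\alpha_i|<\varepsilon$ for all $i$, i.e. $Jx$ lies in the prescribed neighbourhood of $\xi$. Now assume $B_E$ is weakly compact: then $J(B_E)$ is $\sigma(X^{**},X^*)$-compact, hence closed in this (Hausdorff) topology, and being also dense in $B_{X^{**}}$ it must equal $B_{X^{**}}$; rescaling yields $J(X)=X^{**}$, so $X$ is reflexive.

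I expect the main obstacle to be the converse direction, and within it the careful bookkeeping of the two distinct weak topologies: one must work with $\sigma(X^{**},X^*)$, not $\sigma(X^{**},X^{***})$, so that Banach--Alaoglu applies and so that $J$ is a homeomorphism onto its image, and then Goldstine's lemma is needed to pin $J(B_E)$ down as exactly $B_{X^{**}}$ rather than merely a weakly compact subset of it. The separation step in $\mathbb R^n$ is the place where the norm of $X^*$ actually enters. By comparison the forward direction is routine once the homeomorphism property of $J$ is recorded.
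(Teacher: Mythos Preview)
Your argument is correct and is essentially the classical proof of Kakutani's theorem via Banach--Alaoglu and Goldstine's lemma. Note, however, that the paper does not give its own proof of this lemma: it is simply quoted from Brezis's textbook as a known result and then applied, so there is no in-paper proof to compare against. Your write-up matches the standard treatment in that reference.
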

We now state the following convergence result.
\begin{proposition}\label{prop2.1}
For any $T > 0$ and $v_0, g \in H^4(\mathbb{R}^2)$ satisfies \eqref{g small}, then the Cauchy problem \eqref{2-2} admits a unique weak solution $v\in L^\infty(]0, T]; H^4(\mathbb R^2))$. Moreover, we have
\begin{align}\label{2-0} 
     &\left\|v(t)\right\|^{2}_{H^{4}(\mathbb R^{2})}+\frac14\int_{0}^{t}\left\| \partial_y v(s)\right\|_{H^4(\mathbb R^{2})}^2ds\le {B_0} \left\|v_{0}\right\|^{2}_{H^{4}(\mathbb R^{2})},\ \ \ 0<t\le T.
\end{align}
\end{proposition}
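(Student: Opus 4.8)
The plan is to construct $v$ by passing to the limit $\delta\to0+$ in the modified system \eqref{equ-m}, exploiting the $\delta$-independent bound \eqref{uniform bounded}. Since $v_0$ and $g$ satisfy \eqref{g small}, Proposition \ref{prop2.1-0} applies for every $0<\delta\ll1$ and yields a weak solution $v_\delta\in L^\infty([0,T];H^4(\mathbb R^2))$ of \eqref{equ-m} with
\begin{align*}
\left\|v_\delta(t)\right\|^2_{H^4(\mathbb R^2)}+\frac14\int_0^t\left\|\partial_y\left(\langle\delta\tilde y\rangle^{-1}v_\delta\right)(s)\right\|^2_{H^4(\mathbb R^2)}\,ds\le B_0\left\|v_0\right\|^2_{H^4(\mathbb R^2)},
\end{align*}
$B_0$ independent of $\delta$. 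Because $\left|\partial^\beta\langle\delta\tilde y\rangle^{-1}\right|\lesssim1$ uniformly for $0<\delta\le1$ (and $\lesssim\delta$ when $|\beta|\ge1$), the Leibniz rule gives $\left\|\langle\delta\tilde y\rangle^{-1}h\right\|_{H^4}\le C\left\|h\right\|_{H^4}$ with $C$ independent of $\delta$. Hence, on the bounded interval $[0,T]$, the three families $\{v_\delta\}$, $\{\langle\delta\tilde y\rangle^{-1}v_\delta\}$ and $\{\partial_y(\langle\delta\tilde y\rangle^{-1}v_\delta)\}$ are all bounded in $L^2([0,T];H^4(\mathbb R^2))$.

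Since $L^2([0,T];H^4(\mathbb R^2))$ is a Hilbert space, hence reflexive, Lemma \ref{Kakutani} gives a subsequence $\delta_k\to0$ and elements $v,w,z$ with $v_{\delta_k}\rightharpoonup v$, $\langle\delta_k\tilde y\rangle^{-1}v_{\delta_k}\rightharpoonup w$ and $\partial_y(\langle\delta_k\tilde y\rangle^{-1}v_{\delta_k})\rightharpoonup z$ weakly in $L^2([0,T];H^4(\mathbb R^2))$; testing against $C_c^\infty$ functions shows $z=\partial_y w$. To identify $w=v$ I would use a weak--strong argument: for $\psi\in C_c^\infty(]0,T[\times\mathbb R^2)$,
\begin{align*}
\big(\langle\delta_k\tilde y\rangle^{-1}v_{\delta_k},\psi\big)_{L^2}=\big(v_{\delta_k},(\langle\delta_k\tilde y\rangle^{-1}-1)\psi\big)_{L^2}+\big(v_{\delta_k},\psi\big)_{L^2},
\end{align*}
and since $(\langle\delta_k\tilde y\rangle^{-1}-1)\psi\to0$ strongly in $L^2$ (dominated convergence: the coefficient tends to $0$ pointwise and is bounded by $2|\psi|$) while $v_{\delta_k}\rightharpoonup v$ weakly, the right-hand side tends to $(v,\psi)_{L^2}$. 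Hence $w=v$, and consequently $\langle\delta_k\tilde y\rangle^{-1}v_{\delta_k}\rightharpoonup v$ and $\partial_y(\langle\delta_k\tilde y\rangle^{-1}v_{\delta_k})\rightharpoonup\partial_y v$ weakly in $L^2([0,T];H^4(\mathbb R^2))$.

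Next I would pass to the limit in the weak formulation of \eqref{equ-m}. For a fixed admissible test function $\phi$, in each term one moves the factors $\langle\delta_k\tilde y\rangle^{-1}$ and the $y$-derivatives off $v_{\delta_k}$ and onto $\phi$ by integration by parts, so that every term becomes the pairing of a weakly convergent quantity ($v_{\delta_k}$ or $\langle\delta_k\tilde y\rangle^{-1}v_{\delta_k}$) against a function converging strongly in $L^2$; the $\delta_k$-corrections always carry at least one derivative of $\langle\delta_k\tilde y\rangle^{-1}$, hence are $O(\delta_k)$. Together with $\langle\delta_k\tilde y\rangle^{-1}v_0\to v_0$ in $H^4$, this identifies $v$ as a weak solution of \eqref{2-2}. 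The bound \eqref{2-0} then follows from \eqref{uniform bounded} by weak lower semicontinuity of the $H^4$ and $L^2_tH^4_x$ norms (testing against a nonnegative time cut-off before passing to the limit if one wants the pointwise-in-$t$ form); in particular $v\in L^\infty(]0,T];H^4(\mathbb R^2))$. Finally, uniqueness is a standard linear energy estimate: if $v_1,v_2$ are two solutions in this class, then $w=v_1-v_2$ solves $\partial_t w+y\partial_x w-(1+g)^2\partial_y^2 w=0$ with $w|_{t=0}=0$; taking the $H^4$ scalar product with $w$, the transport term reduces after integration by parts to commutator terms bounded by $C\|w\|_{H^4}^2$ (the leading terms $(y\partial_x\partial^\alpha w,\partial^\alpha w)_{L^2}$ vanish because $y\partial_x$ is divergence free), while $-((1+g)^2\partial_y^2 w,w)_{H^4}\ge\frac18\|\partial_y w\|_{H^4}^2-C\|w\|_{H^4}^2$ using $(1+g)^2\ge\frac14$ and \eqref{ineq1}; Gronwall's inequality then forces $w\equiv0$.

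The step I expect to be the main obstacle is the limit $\delta\to0+$ in the \emph{variable, $\delta$-dependent} coefficients $\langle\delta\tilde y\rangle^{-1}$: one only controls $v_{\delta_k}$ weakly, so the equation must be rearranged so that the weakly convergent factor is always tested against a strongly convergent one, and the identification $w=v$ together with the vanishing of the $O(\delta_k)$ commutator corrections has to be carried out term by term. A secondary technical point, shared with Section \ref{section2}, is that the unbounded coefficient $y$ forces the use of compactly supported test functions and a justification of $\int y\,\partial_x(\cdot)\,dx\,dy=0$, either via the $x$-decay implied by $H^4$ regularity or via a further $\langle\delta\tilde y\rangle^{-1}$-truncation.
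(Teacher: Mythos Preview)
Your approach is essentially the same as the paper's: use the $\delta$-uniform bound \eqref{uniform bounded}, extract weak limits by reflexivity (Lemma \ref{Kakutani}), identify the limit as a distributional solution of \eqref{2-2}, and obtain \eqref{2-0} by weak lower semicontinuity. The paper carries out the limit in the equation term by term against $\varphi\in C_c^\infty([0,T]\times\mathbb R^2)$ in exactly the weak--strong spirit you describe.

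Two minor differences are worth recording. First, the paper applies Kakutani to $H^4(\mathbb R^2)$ at each fixed $t$ (so $v_\delta(t)\rightharpoonup v(t)$ in $H^4$), whereas you work globally in the Hilbert space $L^2([0,T];H^4)$; your choice has the advantage that a \emph{single} subsequence $\delta_k$ suffices, while the paper's pointwise formulation leaves the $t$-dependence of the extracted subsequence implicit. Second, you supply a uniqueness argument via a linear $H^4$ energy estimate and Gronwall, which the paper states in the proposition but does not actually prove; your sketch is correct, with the caveat you already flag, namely that pairing $y\partial_x w$ with $w$ in $H^4$ requires the same $\langle\delta\tilde y\rangle^{-1}$-truncation (or the later $\Delta_\sigma^{-1}$ device) to be made rigorous.
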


\begin{proof}
We use the uniform bounded estimates \eqref{uniform bounded} to obtain local strong (smooth) solutions to the original system by taking the limit as $\delta\to0+$. 

Since the constant 
$B_0$ is independent of $0<\delta\ll1$, from \eqref{uniform bounded}, since $H^4(\mathbb R^2)$ is reflexive, together with Lemma \ref{Kakutani} (Kakutani theorem), then $v_\delta$ is compact in the weak topology on $H^4(\mathbb R^2)$, thus for any $0< t \le T$, there exists a $v(t)\in H^4(\mathbb R^2)$ such that 
$$v_{\delta}(t)\to v(t) \quad \text{weakly in} \quad  H^4(\mathbb R^2).$$
Similarly, we have
$$\partial_y\left(\langle \delta \tilde y \rangle^{-1}v_\delta\right) \to \partial_y v \quad \text{weakly in} \quad L^2(]0, t]; H^4(\mathbb R^2)).$$
Moreover, it follows from \eqref{uniform bounded}  that
\begin{align*}
    &\left\|v(t)\right\|^{2}_{H^{4}(\mathbb R^{2})}+\frac14\int_{0}^{t}\left\| \partial_y v(s)\right\|_{H^4(\mathbb R^{2})}^2ds\\
    &\le\liminf_{\delta\to0+}\left(\left\|v_\delta(t)\right\|^2_{H^4(\mathbb{R}^2)} + \frac{1}{4}\int_0^t\left\|\partial_y\left(\langle \delta \tilde y \rangle^{-1}v_\delta\right)(s)\right\|^2_{H^4(\mathbb R^2)}ds\right)\\
    &\le B_0\left\|v_0\right\|^2_{H^4(\mathbb{R}^2)}, \quad 0<t\le T.
\end{align*}

We now show that the limit $v$ satisfies the equation \eqref{2-2} in the sense of distributions. For any $\varphi\in C^\infty_c([0, T]\times\mathbb R^2)$, we estimate
\begin{align*}
    0&=\int_{\mathbb R^2}v(t)\varphi(t)dxdy-\int_{\mathbb R^2}v_0\varphi(0)dxdy+\int_0^t\int_{\mathbb R^2}[P(v)](s, x, y)\varphi(s, x, y)dxdyds\\
    &=\lim_{\delta\to0+}\left\{\int_{\mathbb R^2}v_\delta(t)\varphi(t)dxdy-\int_{\mathbb R^2}v_\delta(0)\varphi(0)dxdy+\int_0^t\int_{\mathbb R^2}[P^\delta(v_\delta)](s, x, y)\varphi(s, x, y)dxdyds\right\},
\end{align*}
where 
$$P(v)=y\partial_xv-(1+g)^2\partial^2_yv,$$ 
and 
$$P^\delta(v_\delta)=\langle \delta \tilde y \rangle^{-1}y \partial_x \left(\langle \delta \tilde y \rangle^{-1}v_\delta\right) - \langle \delta \tilde y \rangle^{-1}(1 + g)^2 \partial_y^2 \left(\langle \delta \tilde y \rangle^{-1}v_\delta \right).$$

By using the fact that $v_{\delta}(t)\to v(t)$ weakly in $ H^4(\mathbb R^2)$, it is obvious that
\begin{align*}
    (v, \varphi)_{L^2}=\lim_{\delta\to0+}(v_\delta, \varphi)_{L^2}, \quad (v_0, \varphi)_{L^2}=\lim_{\delta\to0+}(v_\delta(0), \varphi)_{L^2}.
\end{align*}
From integration by parts, one has
\begin{align*}
    &\int_0^t\int_{\mathbb R^2}\langle \delta \tilde y \rangle^{-1}y \partial_x \left(\langle \delta \tilde y \rangle^{-1}v_\delta\right)\varphi dxdyds-\int_0^t\int_{\mathbb R^2}y \partial_x v \varphi dxdyds\\
    &=\int_0^t\int_{\mathbb R^2}y \partial_x v_\delta \langle \delta \tilde y \rangle^{-2}\varphi dxdyds-\int_0^t\int_{\mathbb R^2}y \partial_x v \varphi dxdyds\\
    &=\int_0^t\int_{\mathbb R^2}y \partial_x v_\delta\left(\langle \delta \tilde y \rangle^{-2}\varphi - \varphi\right)dxdyds + \int_0^t\int_{\mathbb R^2}y \partial_x\left(v_\delta - v \right) \varphi dxdyds\\
    &=-\int_0^t\int_{\mathbb R^2} v_\delta y \partial_x\left(\langle \delta \tilde y \rangle^{-2}\varphi - \varphi\right)dxdyds - \int_0^t\int_{\mathbb R^2}\left(v_\delta - v \right) y \partial_x\varphi dxdyds.
\end{align*}
By using the uniform estimate \eqref{uniform bounded}, we have
\begin{align*}
    \left|\int_0^t\int_{\mathbb R^2} v_\delta y \partial_x\left(\langle \delta \tilde y \rangle^{-2}\varphi - \varphi\right)dxdyds\right|\le \|v_\delta\|_{L^\infty([0, T]; L^2)}\int_0^t\|y \partial_x\left(\langle \delta \tilde y \rangle^{-2}\varphi - \varphi\right)\|_{L^2}ds\to0, \quad \delta\to0+,
\end{align*}
and since $v_{\delta}(s)\to v(s)$ weakly in $ H^4(\mathbb R^2)$ for $0\le s\le t$ and $y \partial_x\varphi\in H^4(\mathbb R^2)$, we have
\begin{align*}
    \left|\int_0^t\int_{\mathbb R^2}\left(v_\delta - v \right) y \partial_x\varphi dxdyds\right|\to0, \quad \delta\to0+.
\end{align*}

It remains to show that 
\begin{align*}
    \lim_{\delta\to0+}\int_0^t\int_{\mathbb R^2}\langle \delta \tilde y \rangle^{-1}(1 + g)^2 \partial_y^2 \left(\langle \delta \tilde y \rangle^{-1}v_\delta \right)\varphi dxdyds=\int_0^t\int_{\mathbb R^2}(1 + g)^2 \partial_y^2 v \varphi dxdyds.
\end{align*}
We will consider it in the following two terms
\begin{align*}
    &\int_0^t\int_{\mathbb R^2}\langle \delta \tilde y \rangle^{-1}(1 + g)^2 \partial_y^2 \left(\langle \delta \tilde y \rangle^{-1}v_\delta \right)\varphi dxdyds-\int_0^t\int_{\mathbb R^2}(1 + g)^2 \partial_y^2 v \varphi dxdyds\\
    &=\int_0^t\int_{\mathbb R^2} (1 + g)^2 \partial_y^2 \left(\langle \delta \tilde y \rangle^{-1}v_\delta \right)\langle \delta \tilde y \rangle^{-1}\varphi dxdyds-\int_0^t\int_{\mathbb R^2}(1 + g)^2 \partial_y^2 v \varphi dxdyds\\
    &=\int_0^t\int_{\mathbb R^2} \partial_y^2 \left(\langle \delta \tilde y \rangle^{-1}v_\delta \right)(1 + g)^2\left(\langle \delta \tilde y \rangle^{-1}\varphi - \varphi\right)dxdyds + \int_0^t\int_{\mathbb R^2} \partial_y^2\left(\langle \delta \tilde y \rangle^{-1}v_\delta - v \right) (1+g)^2 \varphi dxdyds.
\end{align*}
From \eqref{uniform bounded} and \eqref{g small}, by taking the limit, we have
\begin{align*}
    &\left|\int_0^t\int_{\mathbb R^2} \partial_y^2 \left(\langle \delta \tilde y \rangle^{-1}v_\delta \right)(1 + g)^2\left(\langle \delta \tilde y \rangle^{-1}\varphi - \varphi\right)dxdyds\right|\\
    &\le\frac14\int_0^t\|\partial^2_y\left(\langle \delta \tilde y \rangle^{-1}v_\delta \right)\|^2_{L^2(\mathbb R^2)}ds+\int_0^t\|(1 + g)^2\left(\langle \delta \tilde y \rangle^{-1}\varphi - \varphi\right)\|^2_{L^2(\mathbb R^2)}\\
    &\le\frac14\int_0^t\|\partial_y\left(\langle \delta \tilde y \rangle^{-1}v_\delta \right)\|^2_{H^4(\mathbb R^2)}ds+\int_0^t\|(1 + g)^2\left(\langle \delta \tilde y \rangle^{-1}\varphi - \varphi\right)\|^2_{L^2(\mathbb R^2)}\to0, \quad \delta\to0+.
\end{align*}
Since $\varphi\in C^\infty_c([0,T]\times\mathbb R^2)$, one has $\partial_y\left[(1+g)^2 \varphi\right]\in L^2([0, T]; H^4(\mathbb R^2))$, then by applying the fact that $\partial_y\left(\langle \delta \tilde y \rangle^{-1}v_\delta\right) \to \partial_y v$ weakly in $L^2(]0, t]; H^4(\mathbb R^2))$, we have 
\begin{align*}
    &\int_0^t\int_{\mathbb R^2} \partial_y^2\left(\langle \delta \tilde y \rangle^{-1}v_\delta - v \right) (1+g)^2 \varphi dxdyds\\
    &=-\int_0^t\int_{\mathbb R^2} \partial_y\left(\langle \delta \tilde y \rangle^{-1}v_\delta - v \right) \partial_y\left[(1+g)^2 \varphi\right] dxdyds\to0, \quad \delta\to0+.
\end{align*}

We finally combine the convergence we have acquired so far, it follows that the limit $v$ satisfies the equation \eqref{2-2} in the sense of distributions.
\end{proof}

Based on the construction in Proposition \ref{prop2.1}, we know the existence of the Cauchy problem \eqref{2-1} when $v_0 \in H^4(\mathbb R^{2})$. Specifically, we have the following result.

\begin{proposition}\label{existence111}
 Let $v_0 \in H^4(\mathbb R^{2})$, and there exists a sufficiently small constant $\varepsilon > 0$ such that
\begin{align*}
     \left\|v_{0}\right\|_{H^{4}(\mathbb R^{2})}\le\varepsilon,
\end{align*}   
then, there exists $T>0$, such that the Cauchy problem \eqref{2-1} admits a weak solution ${v\in L^{\infty}(]0, T]; H^{4}(\mathbb R^{2}))}$. Moreover, it satisfies 
\begin{align}\label{2-0-1} 
     &\left\|v(t)\right\|^{2}_{H^{4}(\mathbb R^{2})}+\frac14\int_{0}^{t}\left\| \partial_y v(s)\right\|_{H^4(\mathbb R^{2})}^2ds\le \tilde B_0 \varepsilon^{2},\ \ \ \ \ 0<t\le T.
\end{align}
\end{proposition}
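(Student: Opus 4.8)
The plan is to construct the solution of the nonlinear problem \eqref{2-1} by a Picard-type iteration built on the linear theory of Proposition \ref{prop2.1}. First I would set $v^{(0)}\equiv 0$ and, given $v^{(k)}\in L^\infty(]0,T];H^4(\mathbb R^2))$, define $v^{(k+1)}$ as the weak solution furnished by Proposition \ref{prop2.1} of the linearized equation \eqref{2-2} with $g=v^{(k)}$ and initial datum $v_0$; the point is that Proposition \ref{prop2.1} applies as soon as $g$ obeys the smallness hypothesis \eqref{g small}. So the first step is to choose $\varepsilon>0$ (and hence the bound $\varepsilon_0$ of \eqref{g small}) small enough, and possibly shrink $T$, so that the uniform estimate \eqref{2-0}, namely $\|v^{(k+1)}(t)\|_{H^4}^2+\tfrac14\int_0^t\|\partial_y v^{(k+1)}\|_{H^4}^2\,ds\le B_0\varepsilon^2$, propagates along the iteration: since $B_0$ in Proposition \ref{prop2.1-0} is of the form $(1+2(C_6+C_7)Te^{2(C_6+C_7)T})^2\to1$ as $T\to0$, one can make $B_0\le \varepsilon_0^2/\varepsilon^2$ hold, i.e. keep every $v^{(k)}$ inside the ball on which \eqref{2-2} is solvable. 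This yields a sequence uniformly bounded in $L^\infty(]0,T];H^4)$ with $\partial_y v^{(k)}$ uniformly bounded in $L^2(]0,T];H^4)$.

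Next I would prove convergence of the iterates. Subtracting the equations for $v^{(k+1)}$ and $v^{(k)}$, the difference $w^{(k+1)}:=v^{(k+1)}-v^{(k)}$ solves $\partial_t w^{(k+1)}+y\partial_x w^{(k+1)}-(1+v^{(k)})^2\partial_y^2 w^{(k+1)}=\big[(1+v^{(k)})^2-(1+v^{(k-1)})^2\big]\partial_y^2 v^{(k)}$ with zero initial datum. Here I would work only at the level of an $H^3$ (or even $L^2$) energy estimate — deliberately one derivative below the $H^4$ regularity that is being propagated — so that the forcing term can be controlled: $(1+v^{(k)})^2-(1+v^{(k-1)})^2=(2+v^{(k)}+v^{(k-1)})w^{(k)}$, and $\partial_y^2 v^{(k)}$ is available because $\partial_y v^{(k)}\in L^2_tH^4_{xy}\hookrightarrow L^2_tH^3(\text{with one more }y)$, while $w^{(k)}$ and its low-order derivatives are bounded in $L^\infty_t L^2$-type norms; the coefficient $(1+v^{(k)})^2\ge\tfrac14$ again gives the good $\partial_y$ dissipation. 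Mimicking the $I_1,I_2$ computations of Proposition \ref{prop2.1-0} but in $H^3$, and absorbing the dissipation, I expect to reach a Gronwall inequality of the form $\frac{d}{dt}\|w^{(k+1)}\|_{H^3}^2\lesssim \|w^{(k+1)}\|_{H^3}^2+\|w^{(k)}\|_{H^3}^2\,\Phi^{(k)}(t)$ with $\Phi^{(k)}\in L^1(0,T)$ coming from $\|\partial_y v^{(k)}\|_{H^4}^2$; after time integration and choosing $T$ (and $\varepsilon$) small, this gives $\sup_{[0,T]}\|w^{(k+1)}\|_{H^3}^2\le \tfrac12\sup_{[0,T]}\|w^{(k)}\|_{H^3}^2$, so $(v^{(k)})$ is Cauchy in $C([0,T];H^3)$.

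The limit $v$ then lies in $C([0,T];H^3)\cap L^\infty(]0,T];H^4)$ with $\partial_y v\in L^2(]0,T];H^4)$ by lower semicontinuity of the norm under weak-$*$ limits, exactly as in the passage $\delta\to0$ in Proposition \ref{prop2.1}; and the strong $H^3$ convergence is more than enough to pass to the limit in every term of \eqref{2-2} with $g=v^{(k)}$ — in particular in the quadratic term $(1+v^{(k)})^2\partial_y^2 v^{(k)}$, pairing against a test function and integrating $\partial_y$ by parts onto $v^{(k)}$ — so that $v$ solves \eqref{2-1} in the sense of distributions. The estimate \eqref{2-0-1} with $\tilde B_0:=B_0$ is just the inherited bound. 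I expect the main obstacle to be the convergence step: one must carry it out in a norm strictly weaker than the one propagated (to make room for the two $y$-derivatives hitting $v^{(k)}$ in the forcing term), verify that the resulting time-weight $\Phi^{(k)}$ is genuinely in $L^1$ uniformly in $k$ — which is precisely why the $\tfrac14\int_0^t\|\partial_y(\cdots)\|_{H^4}^2$ term in \eqref{uniform bounded}/\eqref{2-0} was retained — and keep track that the smallness conditions on $\varepsilon$ and $T$ needed for (a) solvability of each linear step, (b) propagation of the $H^4$ ball, and (c) contraction in $H^3$ are mutually compatible. Uniqueness of the weak solution at this regularity level follows from the same $H^3$ difference estimate applied to two solutions.
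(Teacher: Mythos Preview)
Your overall strategy---Picard iteration on the linearized problem followed by a contraction estimate on the differences---is correct and matches the paper's architecture. The genuine divergence is in how the contraction is run. You drop to $H^3$ (one derivative below the propagated $H^4$ regularity) so that the forcing $(2+v^{(k)}+v^{(k-1)})\,w^{(k)}\,\partial_y^2 v^{(k)}$ can be bounded by a direct product estimate, using $\partial_y v^{(k)}\in L^2_tH^4\Rightarrow\partial_y^2 v^{(k)}\in L^2_tH^3$. The paper stays in $H^4$ throughout: it multiplies the difference equation by $\langle\delta\tilde y\rangle^{-1}$, sets $\vartheta^n=\langle\delta\tilde y\rangle^{-1}\zeta^n$, takes the $H^4$ inner product, and integrates one $\partial_y$ off of $\partial_y^2 v^n$ so that the forcing splits into pieces of the type $\|\partial_y v^n\|_{H^4}\|\partial_y\vartheta^n\|_{H^4}$ and $\|\partial_y\vartheta^{n-1}\|_{H^4}\|\partial_y v^n\|_{H^4}\|\vartheta^n\|_{H^4}$, which are absorbed by the $\partial_y$-dissipation on both sides of the iteration; only at the very end is $\delta\to0$ taken. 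What the paper's route buys is strong convergence of $\{v^n\}$ directly in $L^\infty_tH^4$ (and of $\partial_y v^n$ in $L^2_tH^4$), so the $H^4$ bound on the limit is immediate rather than recovered by weak-$*$ lower semicontinuity; your route is the classical quasilinear maneuver and is equally valid, but yields strong convergence only in the weaker topology.

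One point you should make explicit: dropping to $H^3$ does \emph{not} by itself dispose of the transport term. The identity $(y\partial_x w,w)_{H^s}\lesssim\|w\|_{H^s}^2$ still requires justification, since $w\in H^4$ does not place $y\partial_x\partial^\alpha w$ in $L^2$. The paper handles this via the same $\langle\delta\tilde y\rangle^{-1}$ localization already used in Proposition~\ref{prop2.1-0}; your phrase ``mimicking the $I_1,I_2$ computations'' should be read as committing to that device in the difference estimate as well, otherwise the argument has a gap at exactly this point.
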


\begin{remark}
    From the Remark \ref{C-infty}, we can obtain that ${v\in C^{\infty}(]0, T]; H^{4}(\mathbb R^{2}))}$.
\end{remark}

\begin{proof}
Let $v^{0}=v_{0}\in H^4(\mathbb R^{2})$, consider the following approximate equations  
$$
\partial_t v^{n} + y\partial_x v^{n} - (1 + v^{n-1})^2 \partial_{y}^2 v^{n} = 0,  \quad v^{n} \big|_{t = 0} = v_0, \quad n\ge1.
$$     
From iterating and Proposition \ref{prop2.1}, one can obtain that for all $n\ge1$, the approximate equations admit a solution $\{ v^{n}\}$ satisfying for all $0<t\le T$
\begin{align}\label{n small}
     &\left\|v^{n}(t)\right\|^{2}_{H^{4}(\mathbb R^{2})}+\frac14\int_{0}^{t}\left\| \partial_y v^{n}(s)\right\|_{H^4(\mathbb R^{2})}^2ds\le \tilde B_{0}\left\|v_{0}\right\|^{2}_{H^{4}(\mathbb R^{2})}\le \tilde B_{0} \varepsilon^{2}.
\end{align}

We need to prove the convergence of sequence $\{ v^{n}\}$ in $L^\infty (]0, T]; H^4(\mathbb{R}^2))$ and 
$\{\partial_y v^{n}\}$ in  $L^2([0, T]; H^4(\mathbb{R}^2))$.  
Setting 
$${
\zeta^{n}(t)=v^{n+1}(t)-v^{n}(t)\in H^4(\mathbb R^2), \quad n\ge1, \quad t \in ]0, T], }
$$ 
then $\zeta$ satisfies the following equation
\begin{equation*}
\left\{
\begin{aligned}
 &\partial_t \zeta^{n} + y\partial_x \zeta^{n} - (1 + v^{n})^2 \partial_{y}^2 \zeta^{n} -  \left((1+v^{n-1})^2-(1+v^n)^2\right) \partial_{y}^2 v^{n}  = 0, \\
 &\zeta^{n} \big|_{t = 0} = 0, \  n\ge1.
\end{aligned}
\right.
\end{equation*}
Multiplying both sides of the above equation by $\langle \delta \tilde y\rangle^{-1}$ with $0<\delta\ll1$, then
\begin{align*}
    &\partial_t \left(\langle \delta \tilde y\rangle^{-1}\zeta^{n}\right) + y\partial_x \left(\langle \delta \tilde y\rangle^{-1}\zeta^{n}\right) - (1 + v^{n})^2 \partial_{y}^2 \left(\langle \delta \tilde y\rangle^{-1}\zeta^{n}\right) \\
    &\quad - \langle \delta \tilde y\rangle^{-1}\left((1+v^{n-1})^2-(1+v^n)^2\right) \partial_{y}^2 v^{n}  \\
    &= -(1 + v^{n})^2 \left[\partial_{y}^2, \langle \delta \tilde y\rangle^{-1}\right] \zeta^{n},
\end{align*}
taking inner product with $\langle \delta \tilde y\rangle^{-1}\zeta^{n}=\vartheta^n$ in $H^4(\mathbb R^2)$, then
\begin{align*} 
     &\frac12\frac{d}{dt}\left\|\vartheta^n(t)\right\|^{2}_{H^{4}(\mathbb R^{2})}+\left(y\partial_x \vartheta^n, \vartheta^n\right)_{H^{4}(\mathbb R^{2})}-\left((1+v^n)^2\partial_y^{2}\vartheta^n, \vartheta^n\right)_{H^{4}(\mathbb R^{2})}\\
     &\le\left|\left(\vartheta^{n-1}(2+v^{n-1}+v^n)\partial_y^{2}v^{n}, \vartheta^n\right)_{H^{4}(\mathbb R^{2})}\right|\\
     &\quad+\left|\left((1 + v^{n})^2 \left[\partial_{y}^2, \langle \delta \tilde y\rangle^{-1}\right] \zeta^{n}, \vartheta^n\right)_{H^4(\mathbb R^2)}\right|=|\Theta_1|+|\Theta_2|.
\end{align*}

We first consider $\left(y\partial_x \vartheta^n, \vartheta^n\right)_{H^{4}(\mathbb R^{2})}$. Using Leibniz formula, one has
\begin{align*}
    \left(y\partial_x \vartheta^n, \vartheta^n\right)_{H^{4}(\mathbb R^{2})}&=\sum_{|\alpha|\le4}\left(\partial^{\alpha_2}_y\left(y\partial^{\alpha_1+1}_x \vartheta^n\right), \partial^\alpha\vartheta^n\right)_{L^{2}(\mathbb R^{2})}\\
    &=\sum_{|\alpha|\le4}\left(y\partial_x\partial^{\alpha} \vartheta^n, \partial^\alpha\vartheta^n\right)_{L^{2}(\mathbb R^{2})}+\sum_{|\alpha|\le4, \alpha_2\ne0}\left(\partial^{\alpha_1+1}_x\partial^{\alpha_2-1}_y \vartheta^n, \partial^\alpha\vartheta^n\right)_{L^{2}(\mathbb R^{2})},
\end{align*}
since $\vartheta^n\to0$ as $|x|, |y|\to\infty$, together with integration by parts we can gain 
$$\sum_{|\alpha|\le4}\left(y\partial_x\partial^{\alpha} \vartheta^n, \partial^\alpha\vartheta^n\right)_{L^{2}(\mathbb R^{2})}=0.$$
Thus, we directly calculated that
$$\left(y\partial_x \vartheta^n, \vartheta^n\right)_{H^{4}(\mathbb R^{2})}\lesssim\|\vartheta^n(t)\|^2_{H^4(\mathbb R^2)}.$$

Moving the negative integral term on the left hand side
of in \eqref{n small}, we observe that 
\begin{align}\label{v-n small}
    \|v^n\|_{L^\infty(]0, T]; H^4(\mathbb R^2))}\le\sqrt{\tilde B_0}\varepsilon, \quad n\ge1,
\end{align}
as discussed in $I_{2}$, we can get 
\begin{align*}
    -\left((1+v^n)^2\partial_y^{2}\vartheta^n, \vartheta^n\right)_{H^{4}(\mathbb R^{2})}\ge\frac{1}{8}\left\|\partial_y\vartheta^n(t)\right\|^2_{H^4(\mathbb R^2)}-C_8\left\|\vartheta^n(t)\right\|^2_{H^4(\mathbb R^2)}.
\end{align*}
For the term $\Theta_1$, direct calculation yields
\begin{align*}
    \Theta_1&=-\left(\vartheta^{n-1}(2+v^{n-1}+v^n)\partial_yv^{n}, \partial_y\vartheta^n\right)_{H^{4}(\mathbb R^{2})}-\left(\partial_y\vartheta^{n-1}(2+v^{n-1}+v^n)\partial_yv^{n}, \vartheta^n\right)_{H^{4}(\mathbb R^{2})}\\
    &\quad-\left(\vartheta^{n-1}\partial_y(v^{n-1}+v^n)\partial_yv^{n}, \vartheta^n\right)_{H^{4}(\mathbb R^{2})}.
\end{align*}
Using the Cauchy-Schwarz inequality and \eqref{v-n small}, together with
\begin{align}\label{com}
    \|f g\|_{H^4(\mathbb R^2)}\le\|f\|_{H^4(\mathbb R^2)}\|g\|_{H^4(\mathbb R^2)}, \quad f, \ g\in H^4(\mathbb R^2),
\end{align}
we can get that 
\begin{align*}
    \left|\Theta_1\right|&\lesssim\left\|\vartheta^{n-1}\right\|_{H^4(\mathbb R^2)}\left\|\partial_yv^{n}\right\|_{H^4(\mathbb R^2)} \left\|\partial_y\vartheta^n\right\|_{H^{4}(\mathbb R^{2})}+\left\|\partial_y\vartheta^{n-1}\right\|_{H^4(\mathbb R^2)}\left\|\partial_yv^{n}\right\|_{H^4(\mathbb R^2)} \left\|\vartheta^n\right\|_{H^{4}(\mathbb R^{2})}\\
    &\quad+\left\|\vartheta^{n-1}\right\|_{H^4(\mathbb R^2)}\left\|\vartheta^n\right\|_{H^{4}(\mathbb R^{2})}\left(\left\|\partial_yv^{n-1}\right\|_{H^4(\mathbb R^2)}\left\|\partial_yv^{n}\right\|_{H^4(\mathbb R^2)} +\left\|\partial_yv^{n}\right\|^2_{H^4(\mathbb R^2)}\right).
\end{align*}
We now turn to consider the term $\Theta_2$. Since 
\begin{align*}
    \left[\partial_{y}^2, \langle \delta \tilde y\rangle^{-1}\right] \zeta^{n}=\partial_{y}^2\langle \delta \tilde y\rangle^{-1} \zeta^{n}+2\partial_{y}\langle \delta \tilde y\rangle^{-1} \partial_{y}\zeta^{n}=\langle \delta \tilde y\rangle\partial_{y}^2\langle \delta \tilde y\rangle^{-1} \vartheta^{n}+2\partial_{y}\langle \delta \tilde y\rangle^{-1} \partial_{y}\left(\langle \delta \tilde y\rangle\vartheta^{n}\right),
\end{align*}
then follows from \eqref{v-n small}, \eqref{com} and the Cauchy-Schwarz inequality that
\begin{align*}
    \left|\Theta_2\right|&\lesssim\left\| \left[\partial_{y}^2, \langle \delta \tilde y\rangle^{-1}\right] \zeta^{n}\right\|_{H^4(\mathbb R^2)}\left\| \vartheta^n\right\|_{H^4(\mathbb R^2)}\\
    &\lesssim\left\|\langle \delta \tilde y\rangle\partial_{y}^2\langle \delta \tilde y\rangle^{-1} \vartheta^{n}\right\|_{H^4(\mathbb R^2)}+\left\|\partial_{y}\langle \delta \tilde y\rangle^{-1} \partial_{y}\langle \delta \tilde y\rangle\vartheta^{n}\right\|_{H^4(\mathbb R^2)}+\left\|\langle \delta \tilde y\rangle\partial_{y}\langle \delta \tilde y\rangle^{-1} \partial_{y}\vartheta^{n}\right\|_{H^4(\mathbb R^2)}\\
    &\lesssim \delta\left(\left\| \vartheta^{n}\right\|_{H^4(\mathbb R^2)}+\left\| \partial_{y}\vartheta^{n}\right\|_{H^4(\mathbb R^2)}\right),
\end{align*}
the last line above we use the fact that $\left|\partial^j_y\langle \delta \tilde y\rangle^{-1}\right|\lesssim \delta^j\langle \delta \tilde y\rangle^{-1-j}$ for $j=1, 2$. We now combine all estimate into one that 
\begin{align*}
    &\frac12\frac{d}{dt}\left\|\vartheta^n(t)\right\|^{2}_{H^{4}(\mathbb R^{2})}+\frac{1}{8}\left\|\partial_y\vartheta^n(t)\right\|^2_{H^4(\mathbb R^2)}\le\tilde C_8\left\|\vartheta^n(t)\right\|^2_{H^4(\mathbb R^2)}+\tilde C_8\delta\left\| \partial_{y}\vartheta^{n}(t)\right\|_{H^4(\mathbb R^2)}\\
     &\quad+\tilde C_8\left\|\vartheta^{n-1}\right\|_{H^4(\mathbb R^2)}\left\|\partial_yv^{n}\right\|_{H^4(\mathbb R^2)} \left\|\partial_y\vartheta^n\right\|_{H^{4}(\mathbb R^{2})}+\tilde C_8\left\|\partial_y\vartheta^{n-1}\right\|_{H^4(\mathbb R^2)}\left\|\partial_yv^{n}\right\|_{H^4(\mathbb R^2)} \left\|\vartheta^n\right\|_{H^{4}(\mathbb R^{2})}\\
    &\quad+\tilde C_8\left\|\vartheta^{n-1}\right\|_{H^4(\mathbb R^2)}\left\|\vartheta^n\right\|_{H^{4}(\mathbb R^{2})}\left(\left\|\partial_yv^{n-1}\right\|_{H^4(\mathbb R^2)}\left\|\partial_yv^{n}\right\|_{H^4(\mathbb R^2)} +\left\|\partial_yv^{n}\right\|^2_{H^4(\mathbb R^2)}\right).
\end{align*}
Integrating from $0$ to $t$, it follows from H$\rm\ddot o$lder's inequality and \eqref{n small} that 
\begin{align*}
    &\frac12\left\|\vartheta^n(t)\right\|^{2}_{H^{4}(\mathbb R^{2})}+\frac{1}{8}\int_0^t\left\|\partial_y\vartheta^n(s)\right\|^2_{H^4(\mathbb R^2)}ds\\
    &\le\frac12\left\|\vartheta^n(0)\right\|^{2}_{H^{4}(\mathbb R^{2})}+\tilde C_8\varepsilon\left\|\vartheta^{n-1}\right\|_{L^\infty(]0, t]; H^4(\mathbb R^2))} \left\|\partial_y\vartheta^n\right\|_{L^2(]0, t]; H^{4}(\mathbb R^{2}))}\\
     &\quad+\tilde C_8\int_0^t\left\|\vartheta^n(s)\right\|^2_{H^4(\mathbb R^2)}ds+\tilde C_8\varepsilon\left\|\vartheta^{n}\right\|_{L^\infty(]0, t]; H^4(\mathbb R^2))}\left\|\partial_y\vartheta^{n-1}(s)\right\|_{L^2(]0, t]; H^4(\mathbb R^2))}\\
    &\quad+\tilde C_8\delta\int_0^t\left\| \partial_{y}\vartheta^{n}(s)\right\|_{H^4(\mathbb R^2)}ds+\tilde C_8\varepsilon^2\left\|\vartheta^{n-1}\right\|_{L^\infty(]0, t]; H^4(\mathbb R^2))}\left\|\vartheta^{n}\right\|_{L^\infty(]0, t]; H^4(\mathbb R^2))}\\
    &\le\frac12\left\|\vartheta^n(0)\right\|^{2}_{H^{4}(\mathbb R^{2})}+\tilde C_8\int_0^t\left\|\vartheta^n(s)\right\|^2_{H^4(\mathbb R^2)}ds+\left(\tilde C_8\delta+\frac{1}{32}\right)\int_0^t\left\| \partial_{y}\vartheta^{n}(s)\right\|_{H^4(\mathbb R^2)}ds\\
     &\quad+\left(4\left(\tilde C_8\varepsilon\right)^2+2\left(\tilde C_8\varepsilon^2\right)^2\right)\left\|\vartheta^{n-1}\right\|^2_{L^\infty(]0, t]; H^4(\mathbb R^2))}+\frac14\left\|\vartheta^{n}\right\|^2_{L^\infty(]0, t]; H^4(\mathbb R^2))}\\
     &\quad+2\left(\tilde C_8\varepsilon\right)^2\int_0^t\left\|\partial_y\vartheta^{n-1}(s)\right\|^2_{H^4(\mathbb R^2)}ds,
\end{align*}
taking $\tilde C_8\delta\le\frac{1}{32}$, then we have that for all $0<t\le T$
\begin{align*}
    &\left\|\vartheta^n(t)\right\|^{2}_{H^{4}(\mathbb R^{2})}+\frac{1}{8}\int_0^t\left\|\partial_y\vartheta^n(s)\right\|^2_{H^4(\mathbb R^2)}ds\\
    &\le\left\|\vartheta^n(0)\right\|^{2}_{H^{4}(\mathbb R^{2})}+2\tilde C_8\int_0^t\left\|\vartheta^n(s)\right\|^2_{H^4(\mathbb R^2)}ds+\frac12\left\|\vartheta^{n}\right\|^2_{L^\infty(]0, t]; H^4(\mathbb R^2))}\\
     &\quad+16\left(\tilde C_8\varepsilon\right)^2\left(\left\|\vartheta^{n-1}\right\|^2_{L^\infty(]0, t]; H^4(\mathbb R^2))}+\frac18\int_0^t\left\|\partial_y\vartheta^{n-1}(s)\right\|^2_{H^4(\mathbb R^2)}ds\right)\\
     &\le\left\|\vartheta^n(0)\right\|^{2}_{H^{4}(\mathbb R^{2})}+2\tilde C_8\int_0^T\left\|\vartheta^n(s)\right\|^2_{H^4(\mathbb R^2)}ds+\frac12\left\|\vartheta^{n}\right\|^2_{L^\infty(]0, T]; H^4(\mathbb R^2))}\\
     &\quad+16\left(\tilde C_8\varepsilon\right)^2\left(\left\|\vartheta^{n-1}\right\|^2_{L^\infty(]0, T]; H^4(\mathbb R^2))}+\frac18\int_0^T\left\|\partial_y\vartheta^{n-1}(s)\right\|^2_{H^4(\mathbb R^2)}ds\right)
\end{align*}
moving the negative integral term on the left hand side of above inequality, one can conclude 
\begin{align*}
    &\frac12\left\|\vartheta^{n}\right\|^2_{L^\infty(]0, T]; H^4(\mathbb R^2))}
    \le\left\|\vartheta^n(0)\right\|^{2}_{H^{4}(\mathbb R^{2})}+2\tilde C_8\int_0^T\left\|\vartheta^n(s)\right\|^2_{H^4(\mathbb R^2)}ds\\
     &\qquad+16\left(\tilde C_8\varepsilon\right)^2\left(\left\|\vartheta^{n-1}\right\|^2_{L^\infty(]0, T]; H^4(\mathbb R^2))}+\frac18\int_0^T\left\|\partial_y\vartheta^{n-1}(s)\right\|^2_{H^4(\mathbb R^2)}ds\right),
\end{align*}
due to the arbitrary of $T>0$, the above inequality also holds if we replace $T$ with $t$. Thus, together with 
\begin{align*}
    \left\|\vartheta^n(0)\right\|^{2}_{H^{4}(\mathbb R^{2})}=\left\|\langle \delta y\rangle^{-1}\zeta^n(0)\right\|^{2}_{H^{4}(\mathbb R^{2})}\le\left\|\zeta^n(0)\right\|^{2}_{H^{4}(\mathbb R^{2})}\le2\tilde B_0\varepsilon^2,
\end{align*}
we can get that for all $0<t\le T$
\begin{align*}
    &\left\|\vartheta^n(t)\right\|^{2}_{H^{4}(\mathbb R^{2})}+\frac{1}{8}\int_0^t\left\|\partial_y\vartheta^n(s)\right\|^2_{H^4(\mathbb R^2)}ds\le4\tilde B_0\varepsilon^2+4\tilde C_8\int_0^t\left\|\vartheta^n(s)\right\|^2_{H^4(\mathbb R^2)}ds\\
     &\qquad+32\left(\tilde C_8\varepsilon\right)^2\left(\left\|\vartheta^{n-1}\right\|^2_{L^\infty(]0, t]; H^4(\mathbb R^2))}+\frac18\int_0^t\left\|\partial_y\vartheta^{n-1}(s)\right\|^2_{H^4(\mathbb R^2)}ds\right).
\end{align*}
Applying the Gronwall's inequality to get
\begin{align*}
    &\left\|\vartheta^n(t)\right\|^{2}_{H^{4}(\mathbb R^{2})}\le16\varepsilon^2\tilde C_8e^{4\tilde C_8T}\left(\tilde B_0+8\left(\tilde C_8\right)^2\left(\left\|\vartheta^{n-1}\right\|^2_{L^\infty(]0, t]; H^4(\mathbb R^2))}+\frac18\int_0^t\left\|\partial_y\vartheta^{n-1}(s)\right\|^2_{H^4(\mathbb R^2)}ds\right)\right),
\end{align*}
this yields that for all $0<t\le T$
\begin{align*}
    &\left\|\vartheta^n(t)\right\|^{2}_{H^{4}(\mathbb R^{2})}+\frac{1}{8}\int_0^t\left\|\partial_y\vartheta^n(s)\right\|^2_{H^4(\mathbb R^2)}ds\\
    &\le4\varepsilon^2\left(4\tilde C_8Te^{4\tilde C_8T}+1\right)^2\left(\tilde B_0+8\left(\tilde C_8\right)^2\left(\left\|\vartheta^{n-1}\right\|^2_{L^\infty(]0, t]; H^4(\mathbb R^2))}+\frac18\int_0^t\left\|\partial_y\vartheta^{n-1}(s)\right\|^2_{H^4(\mathbb R^2)}ds\right)\right)\\
    &\le\frac12\left(\tilde B_0\varepsilon+\left\|\vartheta^{n-1}\right\|^2_{L^\infty(]0, t]; H^4(\mathbb R^2))}+\frac18\int_0^t\left\|\partial_y\vartheta^{n-1}(s)\right\|^2_{H^4(\mathbb R^2)}ds\right)\\
    &\le\frac12\left(\tilde B_0\varepsilon+\left\|\vartheta^{n-1}\right\|^2_{L^\infty(]0, T]; H^4(\mathbb R^2))}+\frac18\int_0^T\left\|\partial_y\vartheta^{n-1}(s)\right\|^2_{H^4(\mathbb R^2)}ds\right),
\end{align*}
if we choose $32\varepsilon\left(4\tilde C_8Te^{4\tilde C_8T}+1\right)^2\left(\tilde C_8\right)^2\le\frac12$. Then using the arbitrary of $T>0$, one can obtain that for all $0<t\le T$
\begin{align*}
    &\left\|\vartheta^n(t)\right\|^{2}_{L^\infty(]0, t]; H^4(\mathbb R^2))}+\frac{1}{8}\int_0^t\left\|\partial_y\vartheta^n(s)\right\|^2_{H^4(\mathbb R^2)}ds\\
    &\le\frac12\left(\tilde B_0\varepsilon+\left\|\vartheta^{n-1}\right\|^2_{L^\infty(]0, t]; H^4(\mathbb R^2))}+\frac18\int_0^t\left\|\partial_y\vartheta^{n-1}(s)\right\|^2_{H^4(\mathbb R^2)}ds\right).
\end{align*}
By taking the limit as $\delta\to0$ on the both side of above inequality, then one immediately has 
\begin{align*}
    &\left\|\zeta^n(t)\right\|^{2}_{L^\infty(]0, t]; H^4(\mathbb R^2))}+\frac{1}{8}\int_0^t\left\|\partial_y\zeta^n(s)\right\|^2_{H^4(\mathbb R^2)}ds\\
    &\le\frac12\left(\tilde B_0\varepsilon+\left\|\zeta^{n-1}\right\|^2_{L^\infty(]0, t]; H^4(\mathbb R^2))}+\frac18\int_0^t\left\|\partial_y\zeta^{n-1}(s)\right\|^2_{H^4(\mathbb R^2)}ds\right),
\end{align*}
from iterating, one can obtain that
\begin{align*}
    &\left\|\zeta^n(t)\right\|^{2}_{L^\infty(]0, t]; H^4(\mathbb R^2))}+\frac{1}{8}\int_0^t\left\|\partial_y\zeta^n(s)\right\|^2_{H^4(\mathbb R^2)}ds\\
    &\le\tilde B_0\varepsilon\sum_{0\le j\le n}2^{-j}+2^{-n}\left(\left\|\zeta^{0}\right\|^2_{L^\infty(]0, t]; H^4(\mathbb R^2))}+\frac18\int_0^t\left\|\partial_y\zeta^{0}(s)\right\|^2_{H^4(\mathbb R^2)}ds\right)\to 0, \ n\to\infty.
\end{align*}
Hence, we gain that $\left\{\zeta^n\right\}\subset H^{4}(\mathbb R^{2})$ is a Cauchy sequence, which provides that there exists a solution $v\in L^{\infty}(]0, T]; H^{4}(\mathbb R^{3}))$ to the Cauchy problem \eqref{2-1} such that 
$$\left\|v(t)\right\|^{2}_{H^{4}(\mathbb R^{2})}+\frac14\int_{0}^{t}\left\| \partial_y v(s)\right\|_{H^4(\mathbb R^{2})}^2ds\le \tilde B_0 \varepsilon^{2},\ \ \ \ \ 0<t\le T.$$
\end{proof}

\section{Energy Estimate for Derivations}\label{section4}

This section gives the energy estimate for high-order derivatives, as a  preparation for proving Theorem \ref{Theorem}. In order to state our result, we define the following auxiliary vector fields $\mathcal{H}_\eta$, 
\begin{equation}\label{auxiliary vector fields}
\mathcal{H}_\eta = \frac{t^{\eta + 1}}{\eta + 1} \partial_x + t^\eta \partial_y,
\end{equation}
where $\eta > 1$. To simplify the notation, we  show the operator $\mathcal{H} = \mathcal{H}_\eta$ has the following two properties. 
\begin{lemma}(\cite{u-1})\label{uu-1}
For any suitable functions $f$ and $g$, we have following Leibiniz-type formula
\begin{align}\label{Leibiniz-type formula}
     \mathcal{H}^{k}\left(f \cdot g\right)=\sum_{j=0}^{k} \binom{k}{j} \left( \mathcal{H}^{j} f \right) \left( \mathcal{H}^{k-j} g \right), \quad \forall \ k\in\mathbb N.
\end{align}
\end{lemma}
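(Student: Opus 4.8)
The plan is to prove the identity \eqref{Leibiniz-type formula} by induction on $k$, using the single structural feature of $\mathcal{H}=\mathcal{H}_\eta$ that matters here: by \eqref{auxiliary vector fields} it is a first-order differential operator in the spatial variables $(x,y)$ alone, with coefficients $\tfrac{t^{\eta+1}}{\eta+1}$ and $t^{\eta}$ depending only on $t$. Consequently $\mathcal{H}$ acts as a derivation on products, i.e.\ for any sufficiently smooth $f,g$,
\[
\mathcal{H}(f\cdot g)=(\mathcal{H}f)\,g+f\,(\mathcal{H}g),
\]
which is just the Leibniz rule for $\partial_x$ and $\partial_y$ combined with linearity; the explicit $t$-dependence of the coefficients is harmless since $\mathcal{H}$ contains no $\partial_t$. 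This disposes of the case $k=1$, and $k=0$ is trivial.

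For the inductive step I would assume \eqref{Leibiniz-type formula} holds for some $k\ge 1$, apply $\mathcal{H}$ to both sides, and move $\mathcal{H}$ inside the finite sum by linearity. On each summand $\binom{k}{j}(\mathcal{H}^{j}f)(\mathcal{H}^{k-j}g)$ the derivation identity above produces $\binom{k}{j}(\mathcal{H}^{j+1}f)(\mathcal{H}^{k-j}g)+\binom{k}{j}(\mathcal{H}^{j}f)(\mathcal{H}^{k-j+1}g)$. Shifting the index $j\mapsto j-1$ in the first resulting sum and adding it to the second, the coefficients recombine through Pascal's identity $\binom{k}{j-1}+\binom{k}{j}=\binom{k+1}{j}$, which yields exactly \eqref{Leibiniz-type formula} with $k$ replaced by $k+1$. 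This is word for word the classical proof of the higher-order Leibniz formula for $\partial^{\alpha}$; the only hypothesis actually used is that $f$ and $g$ are regular enough for all iterated spatial derivatives up to the relevant order to exist, which is what ``suitable functions'' encodes.

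I do not expect a genuine obstacle: the statement is a purely algebraic identity for iterated applications of a derivation, and the proof is formal. The only point deserving a sentence of care is verifying that the $t$-dependent coefficients of $\mathcal{H}$ do not interfere with the Leibniz manipulations --- but since no $\partial_t$ appears, they play the role of constants throughout, so nothing new happens compared with the constant-coefficient case. Alternatively, one may simply invoke the general fact that the $k$-th power of any derivation on a commutative algebra obeys the binomial Leibniz rule, and merely check that $\mathcal{H}$ is a derivation.
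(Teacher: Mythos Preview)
Your argument is correct and complete: since $\mathcal{H}$ is a first-order spatial differential operator with $t$-dependent (but $x,y$-independent in the sense of differentiation) coefficients and no $\partial_t$, it is a derivation on products, and the iterated Leibniz formula then follows by the standard induction via Pascal's identity exactly as you outline. There is nothing to add.

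As for comparison with the paper: the paper does not supply its own proof of this lemma. It is stated with a citation to \cite{u-1} and used as a known tool. Your self-contained argument is the natural one and is presumably what appears in the cited reference.
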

Hereafter, the notation $[T_{1}, T_{2}]=T_{1}T_{2}-T_{2}T_{1}$ is the commutator of the operators $T_1, T_2$.
\begin{lemma}(\cite{ref1})\label{lemma2.1}
Assume the operator $H$ defined as \eqref{auxiliary vector fields}, then for any $k\in\mathbb N_{+}$
\begin{equation}\label{1-5}
\left[ \partial_t + y \partial_x, \mathcal{H}^k \right] = k \delta t^{\delta - 1} \partial_y \mathcal{H}^{k - 1}.
\end{equation}
\end{lemma}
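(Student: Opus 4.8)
The plan is to prove \eqref{1-5} by induction on $k$, after first disposing of the base case $k=1$ by a direct computation and recording the elementary observation that $\mathcal{H}=\mathcal{H}_\eta$ commutes with $\partial_x$, with $\partial_y$, and with multiplication by any function depending on $t$ alone. All three facts are immediate from \eqref{auxiliary vector fields}, since the coefficients of $\mathcal{H}$ depend only on $t$ and the operator contains no $t$-derivative. (I also note that the parameter written $\delta$ in \eqref{1-5} is the parameter $\eta$ appearing in the definition \eqref{auxiliary vector fields} of $\mathcal{H}_\eta$; I would state \eqref{1-5} with $\eta$ throughout.)

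For the base case, I would compute the two commutators separately. Since $\partial_t$ differentiates the coefficients $\tfrac{t^{\eta+1}}{\eta+1}$ and $t^\eta$ of $\mathcal{H}$, one gets $[\partial_t,\mathcal{H}] = t^\eta\partial_x + \eta t^{\eta-1}\partial_y$. On the other hand, $y\partial_x$ commutes with $\partial_x$ and with the $t$-coefficients, so the only contribution comes from $\partial_y(y\,\cdot\,)=1+y\partial_y$, giving $[y\partial_x,\mathcal{H}]=[y\partial_x,t^\eta\partial_y]=-t^\eta\partial_x$. Adding the two, the $t^\eta\partial_x$ terms cancel and
\[
\left[\partial_t+y\partial_x,\ \mathcal{H}\right]=\eta\, t^{\eta-1}\partial_y,
\]
which is exactly \eqref{1-5} for $k=1$ (with $\mathcal{H}^0=\mathrm{Id}$).

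For the inductive step, assume \eqref{1-5} holds for $k-1$. Applying the Leibniz rule for commutators $[A,BC]=[A,B]C+B[A,C]$ with $A=\partial_t+y\partial_x$, $B=\mathcal{H}$, $C=\mathcal{H}^{k-1}$, and using the base case for $[A,\mathcal{H}]$ together with the induction hypothesis for $[A,\mathcal{H}^{k-1}]$, I obtain
\[
\left[\partial_t+y\partial_x,\ \mathcal{H}^k\right]
=\eta\, t^{\eta-1}\partial_y\,\mathcal{H}^{k-1}
+\mathcal{H}\bigl((k-1)\,\eta\, t^{\eta-1}\partial_y\,\mathcal{H}^{k-2}\bigr).
\]
Since $\mathcal{H}$ commutes with $\partial_y$, with $\mathcal{H}^{k-2}$, and with the scalar factor $\eta\, t^{\eta-1}$, the second term equals $(k-1)\,\eta\, t^{\eta-1}\partial_y\,\mathcal{H}^{k-1}$, and summing gives $k\,\eta\, t^{\eta-1}\partial_y\,\mathcal{H}^{k-1}$, which closes the induction.

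There is no serious obstacle here: the argument is a short computation plus a one-line induction. The only points demanding care are bookkeeping in the base case — keeping track of the fact that $\partial_t$ hits the $t$-dependent coefficients of $\mathcal{H}$ while $y\partial_x$ does not, and that the $t^\eta\partial_x$ contributions from the two commutators cancel — and, in the inductive step, using the commutation properties of $\mathcal{H}$ with $\partial_y$ and with functions of $t$ to move $\mathcal{H}$ past the factor $\eta\, t^{\eta-1}\partial_y$.
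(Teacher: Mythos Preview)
Your argument is correct: the base case computation and the induction via the commutator Leibniz rule $[A,BC]=[A,B]C+B[A,C]$ are both sound, and your remark that the $\delta$ in \eqref{1-5} should be read as the parameter $\eta$ from \eqref{auxiliary vector fields} is well taken. The paper itself does not supply a proof of this lemma---it is quoted from \cite{ref1}---so there is nothing to compare against; your induction is the standard and natural proof.
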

To establish the energy estimate for high-order derivatives stated in Proposition \ref{Proposition 3.1.}, we first present the following two lemmas.
\begin{lemma}(\cite{K-P})\label{KP}(Kato-Ponce type commutator estimate)
    For any $s\in\mathbb{R}$  and $1<p<\infty$, denote $\Lambda=\sqrt{1-\Delta}$, we have
    \begin{align}\label{Kato-Ponce-c}
        \left\|\Lambda^s(fg)-f \Lambda^s g\right\|_{L^p}\lesssim\|\nabla f\|_{L^\infty}\|\Lambda^{s-1} g\|_{L^p}+\|\Lambda^s f\|_{L^p}\|g\|_{L^\infty}.
    \end{align}
\end{lemma}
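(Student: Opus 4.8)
The plan is to prove \eqref{Kato-Ponce-c} by a Littlewood--Paley decomposition in the spirit of Bony's paraproduct calculus; this is the classical route, and since \eqref{Kato-Ponce-c} is a by-now standard estimate one may also simply invoke \cite{K-P}. Introduce an inhomogeneous dyadic partition of unity, with $\Delta_j$ ($j\ge-1$) the frequency-$\sim 2^j$ projections, $S_j=\sum_{\ell<j}\Delta_\ell$ the low-frequency cut-offs, and $\widetilde\Delta_j=\Delta_{j-1}+\Delta_j+\Delta_{j+1}$. Expanding $f=\sum_j\Delta_jf$, $g=\sum_k\Delta_kg$ and using $\sum_{j,k}\Lambda^s(\Delta_jf\,\Delta_kg)=\Lambda^s(fg)$ together with $\sum_{j,k}\Delta_jf\,\Lambda^s\Delta_kg=f\Lambda^sg$, one groups the index pairs into $j\le k-2$ (low--high), $k\le j-2$ (high--low) and $|j-k|\le1$ (diagonal), obtaining
\begin{equation*}
[\Lambda^s,f]g=\sum_k\bigl[\Lambda^s,S_{k-1}f\bigr]\Delta_kg\;+\;\sum_j\bigl[\Lambda^s,\Delta_jf\bigr]S_{j-1}g\;+\;\sum_j\bigl[\Lambda^s,\Delta_jf\bigr]\widetilde\Delta_jg\;=:\;A+B+C.
\end{equation*}

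The term $A$ is where the commutator produces the derivative gain, and it is the heart of the argument. Each summand is spectrally supported in an annulus $\{|\xi|\sim 2^k\}$, so in it $\Lambda^s$ can be replaced by $\Lambda^s\widetilde\Delta_k$; the latter is convolution with a kernel $\mathcal K_k$ which, by the usual Schwartz-tail estimates for the oscillatory integral defining it, satisfies $\int|\mathcal K_k(z)|\,dz\lesssim 2^{sk}$ and $\int|\mathcal K_k(z)|\,|z|\,dz\lesssim 2^{(s-1)k}$ (I would record these kernel bounds carefully). Writing
\begin{equation*}
\bigl[\Lambda^s,S_{k-1}f\bigr]\Delta_kg(x)=\int\mathcal K_k(z)\,\bigl(S_{k-1}f(x-z)-S_{k-1}f(x)\bigr)\,\Delta_kg(x-z)\,dz,
\end{equation*}
bounding the difference by $|z|\,\|\nabla S_{k-1}f\|_{L^\infty}\lesssim|z|\,\|\nabla f\|_{L^\infty}$, and applying Minkowski's integral inequality gives $\|[\Lambda^s,S_{k-1}f]\Delta_kg\|_{L^p}\lesssim\|\nabla f\|_{L^\infty}\,2^{(s-1)k}\|\Delta_kg\|_{L^p}$. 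Since these summands are supported in essentially disjoint annuli, the vector-valued Littlewood--Paley inequality (one place where $1<p<\infty$ enters) yields
\begin{equation*}
\|A\|_{L^p}\lesssim\|\nabla f\|_{L^\infty}\Bigl\|\bigl(\textstyle\sum_k2^{2(s-1)k}|\Delta_kg|^2\bigr)^{1/2}\Bigr\|_{L^p}\lesssim\|\nabla f\|_{L^\infty}\|\Lambda^{s-1}g\|_{L^p},
\end{equation*}
which is the first term on the right of \eqref{Kato-Ponce-c}.

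For $B$ and $C$ there is no cancellation to exploit, and none is needed: the $g$-factor enters only at low frequency, hence is controlled by $\|g\|_{L^\infty}$, while $\Lambda^s$ falls on the $f$-factor. For $B$, each summand is spectrally localized near $2^j$, and one bounds it crudely by $\|\Lambda^s(\Delta_jf\,S_{j-1}g)\|_{L^p}+\|\Delta_jf\,\Lambda^sS_{j-1}g\|_{L^p}$, controlling $S_{j-1}g$ and $\Lambda^sS_{j-1}g$ in $L^\infty$ by Bernstein's inequality; summing via the square function (with a Fefferman--Stein maximal-function step, again using $1<p<\infty$) gives $\|B\|_{L^p}\lesssim\|\Lambda^sf\|_{L^p}\|g\|_{L^\infty}$. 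For the diagonal term $C$ the two factors may combine to a low output frequency, so one first reorganizes by output frequency, $\Delta_\ell C=\Delta_\ell\sum_{j\gtrsim\ell}[\Lambda^s,\Delta_jf]\widetilde\Delta_jg$, estimates $\|\Delta_\ell C\|_{L^p}\lesssim\sum_{j\gtrsim\ell}2^{sj}\|\Delta_jf\|_{L^p}\|g\|_{L^\infty}$, and closes the sum in $\ell$ by a Young-type (convolution) estimate; collecting, $\|C\|_{L^p}\lesssim\|\Lambda^sf\|_{L^p}\|g\|_{L^\infty}$. Adding the bounds for $A$, $B$, $C$ proves \eqref{Kato-Ponce-c}.

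I expect the main obstacle to be the derivative-gain estimate for $A$: making rigorous the heuristic that $\Lambda^s$ ``sees only'' the high-frequency factor, so that its commutator with the smooth low-frequency factor $S_{k-1}f$ gains the full $2^{-k}$, requires an honest kernel estimate for $\Lambda^s\widetilde\Delta_k$ together with a clean mean-value bound; once that is in place, $B$ and $C$ are bookkeeping (the diagonal term needing the output-frequency reorganization), and the whole scheme rests on the Littlewood--Paley and maximal-function machinery, which is exactly why the restriction $1<p<\infty$ appears.
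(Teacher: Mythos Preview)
The paper does not give its own proof of this lemma: it is stated with a citation to \cite{K-P} and then used as a black box in Section~\ref{section4}. So there is nothing to compare against; your proposal supplies an argument where the paper simply invokes the literature.

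That said, your paraproduct sketch is the standard route and is essentially sound. One point worth tightening: you assert the estimate for \emph{all} $s\in\mathbb{R}$, but the diagonal piece $C$ as you have written it (reorganize by output frequency $\ell$, bound $\|\Delta_\ell C\|_{L^p}\lesssim\sum_{j\gtrsim\ell}2^{sj}\|\Delta_jf\|_{L^p}\|g\|_{L^\infty}$, then close by a discrete Young inequality) only converges cleanly when $s>0$, since the geometric factor $2^{-s(j-\ell)}$ must be summable in $j-\ell\ge0$. For $s\le0$ the diagonal term needs a different treatment (or one observes that in the inhomogeneous setting $\Lambda^s$ with $s\le0$ is $L^p$-bounded, which trivializes several pieces but changes the bookkeeping). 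Since the paper only ever applies \eqref{Kato-Ponce-c} with $s=4$, this is harmless for the application, but if you want to match the stated generality you should flag the restriction or handle the $s\le0$ case separately.
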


\begin{lemma}\label{lem4.4}
    For any $0<\sigma<1$ and $s \in \mathbb{R}$, define $\Delta_\sigma=1-\sigma\Delta_{x, y}$, then the operator $\Delta_\sigma^{-1}(\sigma\partial_x\partial_y)$ and $\sigma\Lambda^2\Delta_\sigma^{-1}$ are bounded on $H^s(\mathbb R^2)$.
\end{lemma}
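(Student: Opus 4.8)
The plan is to recognize both operators as Fourier multipliers on $\mathbb{R}^2$ and to bound their symbols uniformly in $\sigma\in(0,1)$. Writing $\xi=(\xi_1,\xi_2)$ for the Fourier variable, the operator $\Delta_\sigma=1-\sigma\Delta_{x,y}$ has symbol $1+\sigma|\xi|^2\ge1$, which never vanishes, so $\Delta_\sigma^{-1}$ is the Fourier multiplier with symbol $(1+\sigma|\xi|^2)^{-1}$. Hence $\Delta_\sigma^{-1}(\sigma\partial_x\partial_y)$ is the Fourier multiplier with symbol
\[
m_1(\xi)=\frac{-\sigma\xi_1\xi_2}{1+\sigma|\xi|^2},
\]
and, since $\Lambda^2=1-\Delta$ has symbol $1+|\xi|^2$, the operator $\sigma\Lambda^2\Delta_\sigma^{-1}$ is the Fourier multiplier with symbol
\[
m_2(\xi)=\frac{\sigma(1+|\xi|^2)}{1+\sigma|\xi|^2}.
\]

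Next I would establish the pointwise bounds $\|m_1\|_{L^\infty(\mathbb{R}^2)}\le\tfrac12$ and $\|m_2\|_{L^\infty(\mathbb{R}^2)}\le1$, both independent of $\sigma$. For $m_1$ this follows from Young's inequality, since $\sigma|\xi_1\xi_2|\le\tfrac{\sigma}{2}|\xi|^2\le\tfrac12(1+\sigma|\xi|^2)$. For $m_2$ it suffices to use $0<\sigma<1$, which gives $\sigma(1+|\xi|^2)=\sigma+\sigma|\xi|^2\le1+\sigma|\xi|^2$.

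Finally, the $H^s$ boundedness is immediate from these bounds: every Fourier multiplier commutes with $\Lambda^s=(1-\Delta)^{s/2}$, so by Plancherel's theorem, for any $f\in H^s(\mathbb{R}^2)$,
\[
\|\Delta_\sigma^{-1}(\sigma\partial_x\partial_y)f\|_{H^s}=\big\|m_1(\xi)\,(1+|\xi|^2)^{s/2}\widehat f\big\|_{L^2}\le\|m_1\|_{L^\infty}\|f\|_{H^s}\le\tfrac12\|f\|_{H^s},
\]
and likewise $\|\sigma\Lambda^2\Delta_\sigma^{-1}f\|_{H^s}\le\|m_2\|_{L^\infty}\|f\|_{H^s}\le\|f\|_{H^s}$. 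I do not expect any genuine obstacle in this lemma; the only points worth flagging are that the operator norms so obtained are uniform in $\sigma\in(0,1)$ — which is precisely what is needed when these estimates are applied to a regularization parameter later on — and that the argument does not use the value or sign of $s$ at all.
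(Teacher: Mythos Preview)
Your proof is correct and follows essentially the same approach as the paper: compute the Fourier symbols of the two operators and bound them pointwise (the paper uses $|\xi_1\xi_2|\le|\xi|^2$ to get a bound of $1$ where you use $|\xi_1\xi_2|\le\tfrac12|\xi|^2$ to get $\tfrac12$), then apply Plancherel to conclude the $H^s$ boundedness with constants independent of $\sigma$.
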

\begin{proof}
    Let $f\in H^s(\mathbb R^2)$, consider $\Delta_\sigma^{-1}(\sigma\partial_x\partial_y f)$ and $\sigma\Lambda^2\Delta_\sigma^{-1}f$ on $H^s(\mathbb R^2)$ separately. Then 
    \begin{align*}
        \|\Delta_\sigma^{-1}(\sigma\partial_x\partial_y f)\|^2_{H^s(\mathbb R^2)}&=\int_{\mathbb R^2}(1+|\xi|^2)^s\left|\mathcal{F}[\Delta_\sigma^{-1}(\sigma\partial_x\partial_y f)](\xi)\right|^2d\xi\\
        &=\int_{\mathbb R^2}(1+|\xi|^2)^s\left|(1+\sigma|\xi|^2)^{-1}\sigma\xi_1\xi_2\mathcal{F}[f](\xi)\right|^2d\xi\\
        &\le\int_{\mathbb R^2}(1+|\xi|^2)^s\left|(1+\sigma|\xi|^2)^{-1}\sigma|\xi|^2\mathcal{F}[f](\xi)\right|^2d\xi\le\|f\|^2_{H^s(\mathbb R^2)}.
    \end{align*}
    and 
    \begin{align*}
        \|\sigma\Lambda^2\Delta_\sigma^{-1}f\|^2_{H^s(\mathbb R^2)}&=\int_{\mathbb R^2}(1+|\xi|^2)^s\left|\mathcal{F}[\sigma\Lambda^2\Delta_\sigma^{-1}f](\xi)\right|^2d\xi\\
        &=\int_{\mathbb R^2}(1+|\xi|^2)^s\left|\sigma(1+|\xi|^2)(1+\sigma|\xi|^2)^{-1}\mathcal{F}[f](\xi)\right|^2d\xi\le\|f\|^2_{H^s(\mathbb R^2)}.
    \end{align*}
\end{proof}

We then derive the energy estimates for high-order derivatives, where all constants implied in the notation $\lesssim$ are uniformly independent of the induction index.

\begin{proposition}\label{Proposition 3.1.}
Let $T > 0$, $v_0 \in H^4(\mathbb R^{2})$, and there exists a sufficiently small constant $\varepsilon > 0$ such that
\begin{align*}
     \left\|v_{0}\right\|_{H^{4}(\mathbb R^{2})}\le\varepsilon.
\end{align*}  
Let $v$ be the solution of the Cauchy problem \eqref{2-1}, then there exists a constant $A > 1$ which independent of $k$, such that for any $t \in \left] 0, T \right]$,  $k \in \mathbb{N}$, 
\begin{align}\label{H-k}
\left\| \mathcal{H}^k v(t) \right\|^2_{H^4(\mathbb{R}^2)} + \frac14 \int_0^t \left\| \partial_y \mathcal{H}^k v(s) \right\|^2_{H^4(\mathbb{R}^2)} ds \leq \left( \frac{A^{k -1} k!}{(k+1)^2} \right)^2.
\end{align}
\end{proposition}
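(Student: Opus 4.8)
The plan is to prove \eqref{H-k} by induction on $k$, the inductive step being a weighted $H^4(\mathbb R^2)$ energy estimate for $w_k:=\mathcal H^kv$. First I would commute $\mathcal H^k$ through \eqref{2-1}. By Lemma \ref{lemma2.1} one has $\mathcal H^k(\partial_t+y\partial_x)v=(\partial_t+y\partial_x)w_k-k\eta\,t^{\eta-1}\partial_yw_{k-1}$, and since $\mathcal H$ commutes with $\partial_y$ and with multiplication by functions of $t$ alone, the Leibniz formula \eqref{Leibiniz-type formula} gives $\mathcal H^k\big((1+v)^2\partial_y^2v\big)=(1+v)^2\partial_y^2w_k+\sum_{j=1}^k\binom kj\big(\mathcal H^j(1+v)^2\big)\partial_y^2w_{k-j}$, where $\mathcal H^j(1+v)^2=2w_j+\sum_{i=0}^j\binom ji w_iw_{j-i}$ for $j\ge1$. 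Hence $w_k$ solves a parabolic equation with the same principal part as \eqref{2-1},
\begin{equation*}
\partial_tw_k+y\partial_xw_k-(1+v)^2\partial_y^2w_k=k\eta\,t^{\eta-1}\partial_yw_{k-1}+\sum_{j=1}^k\binom kj\big(\mathcal H^j(1+v)^2\big)\partial_y^2w_{k-j}=:R_k^{\mathrm c}+R_k^{\mathrm n},
\end{equation*}
with $(1+v)^2\ge\frac14$ because $\|v\|_{L^\infty_tH^4}\le\sqrt{\tilde B_0}\,\varepsilon$ is small by Proposition \ref{existence111}; moreover $\mathcal H^kv|_{t=0}=0$ for $k\ge1$ since the coefficients of $\mathcal H$ vanish at $t=0$, while $w_0|_{t=0}=v_0$.

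Next I would take the $H^4$ scalar product of the $w_k$-equation with $w_k$. The transport term is handled by integration by parts exactly as in the proof of Proposition \ref{existence111}, contributing only $\lesssim\|w_k\|_{H^4}^2$; the parabolic term, treated by the same Leibniz bookkeeping as in Section \ref{section2} together with the algebra property \eqref{com} and the smallness of $v$, yields $-\big((1+v)^2\partial_y^2w_k,w_k\big)_{H^4}\ge\frac14\|\partial_yw_k\|_{H^4}^2-C\|w_k\|_{H^4}^2$. (These formal identities are made rigorous by the regularization procedure of Section \ref{section2} --- mollifying by $\Delta_\sigma^{-1}=(1-\sigma\Delta)^{-1}$, controlling the commutators via Lemma \ref{lem4.4}, and letting $\sigma\to0$ --- so one may assume all quantities below are finite.) Altogether,
\begin{equation*}
\frac{d}{dt}\|w_k\|_{H^4}^2+\frac14\|\partial_yw_k\|_{H^4}^2\le C\|w_k\|_{H^4}^2+\big|(R_k^{\mathrm c},w_k)_{H^4}\big|+\big|(R_k^{\mathrm n},w_k)_{H^4}\big|.
\end{equation*}

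The heart of the argument is the estimation of the two source terms through the inductive data: by the induction hypothesis $\|w_m(s)\|_{H^4}^2\le E_m$ and $\int_0^t\|\partial_yw_m\|_{H^4}^2\,ds\le4E_m$ for all $m<k$, where $E_m:=\big(A^{m-1}m!/(m+1)^2\big)^2$. Set $\tilde N_k(t):=\sup_{0<s\le t}\|w_k(s)\|_{H^4}^2+\frac14\int_0^t\|\partial_yw_k\|_{H^4}^2\,ds$. For the commutator term, $t^{\eta-1}\le T^{\eta-1}$ and Cauchy--Schwarz in time give $\int_0^t|(R_k^{\mathrm c},w_k)_{H^4}|\,ds\lesssim k\sqrt{E_{k-1}}\,\sqrt{\tilde N_k(t)}$. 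For $R_k^{\mathrm n}$ I would, in each summand of the $H^4$ pairing, expand $\partial^\alpha\big((\mathcal H^j(1+v)^2)\partial_y^2w_{k-j}\big)$ by Leibniz and integrate by parts once in $y$ so as to transfer a $\partial_y$ onto $w_k$, being careful never to place more than four derivatives on the coefficient $\mathcal H^j(1+v)^2$ (exactly as in the treatment of $I_{2,1,1}$ in Section \ref{section2}) and putting the low-order factors in $L^\infty$ via $H^2\hookrightarrow L^\infty$; with the Kato--Ponce estimate (Lemma \ref{KP}) and \eqref{com} this yields
\begin{equation*}
\big|(R_k^{\mathrm n},w_k)_{H^4}\big|\lesssim\sum_{j=1}^k\binom kj\big\|\mathcal H^j(1+v)^2\big\|_{H^4}\,\|\partial_yw_{k-j}\|_{H^4}\,\big(\|\partial_yw_k\|_{H^4}+\|w_k\|_{H^4}\big),
\end{equation*}
where $\|\mathcal H^j(1+v)^2\|_{H^4}\le2\|w_j\|_{H^4}+\sum_{i=0}^j\binom ji\|w_i\|_{H^4}\|w_{j-i}\|_{H^4}$. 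The delicate index is $j=k$, where the coefficient still contains $w_k$; since it is then multiplied by $\|\partial_yw_0\|_{H^4}=\|\partial_yv\|_{H^4}$, time integration and Cauchy--Schwarz (using $\int_0^t\|\partial_yv\|_{H^4}^2\,ds\lesssim\varepsilon^2$) turn that term into $\lesssim\varepsilon\,\tilde N_k(t)+\varepsilon^2A^{-2}E_k$, absorbed by the smallness of $\varepsilon$; for $1\le j\le k-1$ every factor carries an index $<k$, so the induction hypothesis applies directly.

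Finally I would integrate the differential inequality from $0$ to $t$ and apply Gronwall. After the Young's inequalities (which absorb the $\|\partial_yw_k\|_{H^4}$-factors into the dissipation and the $\varepsilon$-small diagonal contribution into $\tilde N_k$) one is left, since $\|w_k(0)\|_{H^4}=0$ for $k\ge1$, with $\tilde N_k(t)\le C\int_0^t\tilde N_k(s)\,ds+C\big(A^{-2}E_k+k^2E_{k-1}+\big(\sum_{j=1}^{k-1}\binom kj\sqrt{E_jE_{k-j}}\big)^2\big)$, and a weighted Cauchy--Schwarz together with the convolution bound $\sum_{j\ge1}(j+1)^{-2}(k-j+1)^{-2}\lesssim(k+1)^{-2}$ gives
\begin{equation*}
\Big(\sum_{j=1}^{k-1}\binom kj\sqrt{E_jE_{k-j}}\Big)^2\lesssim\frac{E_k}{A^2},\qquad \frac{k^2E_{k-1}}{E_k}=\frac{(k+1)^4}{A^2k^4}\lesssim\frac1{A^2},
\end{equation*}
the cubic contributions from $(1+v)^2=1+2v+v^2$ being estimated the same way with a further spare factor $A^{-2}$. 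Hence $\tilde N_k(t)\le C'e^{CT}A^{-2}E_k\le E_k$ once $A>1$ is chosen large enough depending only on $T$, $\eta$ and the fixed constants of Sections \ref{section2}--\ref{existence}, the base case $k=0$ being exactly \eqref{2-0-1} (which gives $\tilde N_0(t)\le E_0$ for $\varepsilon$ small). This closes the induction. The main difficulty is precisely this last step: arranging that the Leibniz sums produced by the quadratic nonlinearity collapse back to the exact shape $A^{k-1}k!/(k+1)^2$ uniformly in $k$, which forces one to track simultaneously the cancellation of the binomial coefficients against the factorials, a uniform spare factor $A^{-1}$ (so a large $A$ dominates every fixed constant and the Gronwall exponential $e^{CT}$), and the quadratic weights $(m+1)^{-2}$, designed exactly so that their discrete convolution stays comparable to $(k+1)^{-2}$ --- and this bookkeeping must also be carried out for the cubic terms and, most delicately, for the borderline index $j=k$, where the absorbed quantity has to be genuinely small rather than merely bounded. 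Everything else is a reprise of the $H^4$ estimates already performed in Sections \ref{section2} and \ref{existence}.
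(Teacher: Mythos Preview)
Your proposal is correct and follows essentially the same route as the paper: induction on $k$ with base case \eqref{2-0-1}, commuting $\mathcal H^k$ through the equation via Lemma \ref{lemma2.1}, $\Delta_\sigma^{-1}$-regularized $H^4$ energy estimate with coercivity of the parabolic term, Leibniz expansion of the nonlinearity followed by one integration by parts in $y$ and the Kato--Ponce estimate (Lemma \ref{KP}), separate handling of the borderline index $j=k$ by smallness of $\varepsilon$, the convolution bound $\sum_j(j+1)^{-2}(k-j+1)^{-2}\lesssim(k+1)^{-2}$, and Gronwall with $A$ chosen large. The only visible difference is cosmetic: you expand $\mathcal H^j(1+v)^2=2w_j+\sum_i\binom ji w_iw_{j-i}$ and track the linear piece explicitly, whereas the paper bounds $\|\mathcal H^j(1+v)^2\|_{H^4}$ directly---both lead to the same spare factor $A^{-2}$ in the final comparison with $E_k$.
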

\begin{proof}
We prove this proposition by induction on the index $k$. For $k=0$, it has been already proved in \eqref{2-0-1}. Assume that $k\ge1$ and \eqref{H-k} holds for all $0<t\le T$ and $q\in\mathbb N$ with $0\le q\le k-1$
\begin{align}\label{hypothesis}
\left\| \mathcal{H}^q v(t) \right\|^2_{H^4(\mathbb{R}^2)} +  \frac14 \int_0^t \left\| \partial_y \mathcal{H}^q v(s) \right\|^2_{H^4(\mathbb{R}^2)} ds \leq \left( \frac{A^{q -1} q!}{(q+1)^2} \right)^2.
\end{align}
Now, we shall prove \eqref{H-k} for $q = k$. Using the equation  the commutator \eqref{1-5}, we have
\begin{align*}
    \partial_t \mathcal{H}^{k} v + y\partial_x \mathcal{H}^{k} v - \mathcal{H}^{k} \left((1 + v)^2 \partial_{y}^2 v\right)=  k\delta t^{\delta-1}\partial_{y}\mathcal{H}^{k-1}v.
\end{align*}

However, since integration by parts does not work when taking scalar product in $H^4(\mathbb{R}^2)$ with respect to $\mathcal{H}^k v$, we would use the following strategy: We first define 
$$
v^k_\sigma=\triangle^{-1}_\sigma \mathcal{H}^k v\in L^\infty([0, T]; H^5(\mathbb{R}^2)), \ \ \ \ 
\partial_y v^k_\sigma
\in  L^2([0, T]; H^5(\mathbb{R}^2)),
$$
where
$$
\triangle_\sigma=1-\sigma\triangle_{x, y}, \quad 0<\sigma\ll 1.
$$
Then we would take the scalar product in $H^4(\mathbb{R}^2)$ with $ \triangle^{-1}_\sigma v^k_\sigma$, 
\begin{equation}\label{inden}
\begin{aligned}
\left( \triangle^{-1}_\sigma (\partial_t + y\partial_x) \mathcal{H}^{k} v,   v^k_\sigma\right)_{H^4(\mathbb{R}^2)} & - 
\left( \triangle^{-1}_\sigma \mathcal{H}^{k} \left((1 + v)^2 \partial_{y}^2 v\right) ,   v^k_\sigma\right)_{H^4(\mathbb{R}^2)}\\
& = \left(\triangle^{-1}_\sigma k\delta t^{\delta-1}\partial_{y}\mathcal{H}^{k-1}v,   v^k_\sigma\right)_{H^4(\mathbb{R}^2)}.
\end{aligned}
\end{equation}
{\em And we would give an estimate of the above equality, and then let $\sigma \rightarrow 0$ to obtain the desired inequality.}

We now give an estimate of the identity \eqref{inden}. First, it is clear that
\begin{align*}
&\triangle^{-1}_\sigma (\partial_t + y\partial_x) \mathcal{H}^{k} v= \triangle^{-1}_\sigma (\partial_t + y\partial_x) \triangle_\sigma\triangle^{-1}_\sigma \mathcal{H}^{k} v\\
& \qquad = (\partial_t + y\partial_x) \triangle^{-1}_\sigma \mathcal{H}^{k} v +
	\triangle^{-1}_\sigma [(\partial_t + y\partial_x),  \triangle_\sigma]\triangle^{-1}_\sigma \mathcal{H}^{k} v\\
	& \qquad = (\partial_t + y\partial_x) \triangle^{-1}_\sigma \mathcal{H}^{k} v +
	\triangle^{-1}_\sigma  (\sigma \partial_x\partial_y)\triangle^{-1}_\sigma \mathcal{H}^{k} v\\
	& \qquad = (\partial_t + y\partial_x) v^k_\sigma+T_\sigma v^k_\sigma ,
\end{align*}
where $T_\sigma=\triangle^{-1}_\sigma  (\sigma \partial_x\partial_y)$ is a bounded operator on Sobolev space, using the Cauchy-Schwarz inequality
\begin{align*}
    \left|(T_\sigma v^k_\sigma, v^k_\sigma)_{H^4(\mathbb R^2)}\right|\lesssim\left\|v^k_\sigma\right\|^2_{H^4(\mathbb R^2)}.
\end{align*}
As demonstrated in the proof of Proposition \ref{existence111}, notice that $\Delta_\sigma$ is a Fourier multipiler with the symbol $(1+\sigma|\xi|^2)^{-1}$, we can obtain the following estimate: 
$$
\left| \left( y\partial_x v_\sigma^k, v_\sigma^k \right)_{H^4(\mathbb{R}^2)} \right| \lesssim \left\| v_\sigma^k \right\|_{H^4(\mathbb{R}^2)}.
$$

We observe a few simple facts about $\left(\Delta_\sigma^{-1}\mathcal{H}^{k} \left((1 + v)^2 \partial_{y}^2 v\right), v^k_\sigma\right)_{H^4(\mathbb R^2)}$, by using the Leibniz-type formula \eqref{Leibiniz-type formula}, it follows that 
\begin{align*}
&\left(\Delta_\sigma^{-1}\mathcal{H}^{k} \left((1 + v)^2 \partial_{y}^2 v\right), v^k_\sigma\right)_{H^4(\mathbb R^2)}\\
&=\left(\Delta_\sigma^{-1}\partial_y \mathcal{H}^{k} \left((1 + v)^2 \partial_{y} v\right), v^k_\sigma\right)_{H^4(\mathbb R^2)}-\left(\Delta_\sigma^{-1}\mathcal{H}^{k} \left(\partial_y(1 + v)^2 \partial_{y} v\right), v^k_\sigma\right)_{H^4(\mathbb R^2)}\\
&=-\left(\Delta_\sigma^{-1}\mathcal{H}^{k} \left((1 + v)^2 \partial_{y} v\right), \partial_yv^k_\sigma\right)_{H^4(\mathbb R^2)}-\left(\Delta_\sigma^{-1}\mathcal{H}^{k} \left(\partial_y(1 + v)^2 \partial_{y} v\right), v^k_\sigma\right)_{H^4(\mathbb R^2)}\\
&=-\sum_{j=0}^{k} \binom{k}{j} \left(\Delta_\sigma^{-1}\left[( \mathcal{H}^j (1 + v)^2) ( \mathcal{H}^{k-j}  \partial_{y} v)\right], \partial_yv^k_\sigma\right)_{H^4(\mathbb R^2)}\\
&\qquad-\sum_{j=0}^{k} \binom{k}{j}\left(\Delta_\sigma^{-1}\left[\mathcal{H}^{j} \left(\partial_y(1 + v)^2\right)\left( \mathcal{H}^{k-j}\partial_{y} v\right)\right], v^k_\sigma\right)_{H^4(\mathbb R^2)}\\
&=-\sum_{j=0}^{k}S_{1, j}-\sum_{j=0}^{k}S_{2, j}.
\end{align*}
We shall compute both terms separately. For the $S_{1, j}$ with $j=0, 1, 2, \cdots, k$, we can write it as
\begin{align*}
    S_{1, j}&=\left(\Delta_\sigma^{-1}\left[( \mathcal{H}^j (1 + v)^2)\Delta_\sigma \Delta_\sigma^{-1} \mathcal{H}^{k-j}  \partial_{y} v\right], \partial_yv^k_\sigma\right)_{H^4(\mathbb R^2)}\\
    &=\left(\Delta_\sigma^{-1}\left[( \mathcal{H}^j (1 + v)^2)\Delta_\sigma \partial_y v_\sigma^{k-j}\right], \partial_yv^k_\sigma\right)_{H^4(\mathbb R^2)}\\
    &=\left( \mathcal{H}^j (1 + v)^2 \partial_y v_\sigma^{k-j}, \partial_yv^k_\sigma\right)_{H^4(\mathbb R^2)}+\left(\Delta_\sigma^{-1}\left[ \mathcal{H}^j (1 + v)^2, \Delta_\sigma\right] \partial_y v_\sigma^{k-j}, \partial_yv^k_\sigma\right)_{H^4(\mathbb R^2)}\\
    &=S_{1, j, 1}+S_{1, j, 2}.
\end{align*}
If $1\le j\le k$, then from the Cauchy-Schwarz inequality
\begin{align*}
    \left|S_{1, j, 1}\right|&\le\left\| \mathcal{H}^j (1 + v)^2 \partial_y v_\sigma^{k-j}\right\|_{H^4(\mathbb R^2)} \left\|\partial_yv^k_\sigma\right\|_{H^4(\mathbb R^2)}\\
    &\lesssim\left\| \mathcal{H}^j (1 + v)^2 \right\|_{H^4(\mathbb R^2)} \left\|\partial_y v_\sigma^{k-j}\right\|_{H^4(\mathbb R^2)} \left\|\partial_yv^k_\sigma\right\|_{H^4(\mathbb R^2)},
\end{align*}
here we use $\|fg\|_{H^s}\le C_s\|f\|_{H^s}\|g\|_{H^s}$ for any $f, g\in H^s$.

We observe that the term $S_{1, 0, 1}$ can be written it as
\begin{align*}
    S_{1, 0, 1}&=\left( (1 + v)^2 \partial_y v_\sigma^{k}, \partial_yv^k_\sigma\right)_{H^4(\mathbb R^2)}=\left(\Lambda^4\left((1 + v)^2 \partial_y v_\sigma^{k}\right), \Lambda^4\partial_yv^k_\sigma\right)_{L^2(\mathbb R^2)}\\
    &=\left((1 + v)^2 \Lambda^4\partial_y v_\sigma^{k}, \Lambda^4\partial_yv^k_\sigma\right)_{L^2(\mathbb R^2)}+\left(\left[\Lambda^4, (1 + v)^2\right] \partial_y v_\sigma^{k}, \Lambda^4\partial_y v^k_\sigma\right)_{L^2(\mathbb R^2)}.
\end{align*}
Since $\|v\|_{L^\infty([0, T]; H^4(\mathbb R^2))}\le\varepsilon\ll1$, then 
\begin{align*}
    \left((1 + v)^2 \Lambda^4\partial_y v_\sigma^{k}, \Lambda^4\partial_yv^k_\sigma\right)_{L^2(\mathbb R^2)}\ge\frac12\|\Lambda^4\partial_y v_\sigma^{k}\|^2_{L^2(\mathbb R^2)}=\frac12\|\partial_y v_\sigma^{k}\|^2_{H^4(\mathbb R^2)}.
\end{align*}
We apply the the second term via the commutator estimate \eqref{Kato-Ponce-c}, 
\begin{align*}
    &\left|\left(\left[\Lambda^4, (1 + v)^2\right] \partial_y v_\sigma^{k}, \Lambda^4\partial_y v^k_\sigma\right)_{L^2(\mathbb R^2)}\right|\\
    &\le\left\|\left[\Lambda^4, (1 + v)^2\right] \partial_y v_\sigma^{k}\right\|_{L^2(\mathbb R^4)}\|\Lambda^4\partial_y v^k_\sigma\|_{L^2(\mathbb R^2)}\\
    &\lesssim\left(\|\nabla_{x, y}(1+v)^2\|_{L^\infty(\mathbb R^2)}\|\Lambda^3 \partial_y v^k_\sigma\|_{L^2(\mathbb R^2)}+\|\Lambda ^4(1+v)^2\|_{L^2(\mathbb R^2)}\|\partial_y v^k_\sigma\|_{L^\infty(\mathbb R^2)}\right)\|\Lambda^4\partial_y v^k_\sigma\|_{L^2(\mathbb R^2)}\\
    &\lesssim\left(\|\nabla_{x, y}(1+v)^2\|_{L^\infty(\mathbb R^2)}\|\Lambda^3 \partial_y v^k_\sigma\|_{L^2(\mathbb R^2)}+\|\Lambda ^4(1+v)\|^2_{L^2(\mathbb R^2)}\|\partial_y v^k_\sigma\|_{L^\infty(\mathbb R^2)}\right)\|\Lambda^4\partial_y v^k_\sigma\|_{L^2(\mathbb R^2)},
\end{align*}
where $\Lambda=(1-\Delta_{x, y})^{\frac12}$. Using Sobolev embedding $\|\cdot\|_{L^\infty(\mathbb R^2)}\lesssim\|\cdot\|_{H^{1+}(\mathbb R^2)}$, one has
\begin{align*}
    \|\nabla_{x, y}(1+v)^2\|_{L^\infty(\mathbb R^2)}\lesssim\|(1+v)^2\|_{H^3(\mathbb R^2)}\lesssim\|v\|^2_{H^4(\mathbb R^2)}, \qquad \|\partial_y v^k_\sigma\|_{L^\infty(\mathbb R^2)}\lesssim\|\partial_y v^k_\sigma\|_{H^4(\mathbb R^2)},
\end{align*}
this implies 
\begin{align*}
    &\left|\left(\left[\Lambda^4, (1 + v)^2\right] \partial_y v_\sigma^{k}, \Lambda^4\partial_y v^k_\sigma\right)_{L^2(\mathbb R^2)}\right| \lesssim \|v\|^2_{H^4(\mathbb R^2)}\|\partial_y v^k_\sigma\|^2_{H^4(\mathbb R^2)}.
\end{align*}
Hence, choosing suitable $\varepsilon$, we can deduce that 
\begin{align*}
    S_{1, 0, 1}\ge \frac14\|\partial_y v^k_\sigma\|_{H^4(\mathbb R^2)}.
\end{align*}
Noting that 
\begin{align*}
    \left[ \mathcal{H}^j (1 + v)^2, \Delta_\sigma\right]=-\sigma\left[ \mathcal{H}^j (1 + v)^2, \Delta_{x, y}\right]=-\sigma\left(\Delta_{x, y}\mathcal{H}^j (1 + v)^2+\nabla_{x, y} \mathcal{H}^j (1 + v)^2 \nabla_{x, y}\right),
\end{align*}
then $S_{1, j, 2}$ can be written as
\begin{align*}
    S_{1, j, 2}&=-\sigma\left(\Delta_\sigma^{-1}\left(\Delta_{x, y}\mathcal{H}^j (1 + v)^2 \partial_y v_\sigma^{k-j}\right), \partial_yv^k_\sigma\right)_{H^4(\mathbb R^2)}\\
    &\quad-\sigma\left(\Delta_\sigma^{-1}\left( \nabla_{x, y} \mathcal{H}^j (1 + v)^2 \nabla_{x, y}\partial_y v_\sigma^{k-j}\right), \partial_yv^k_\sigma\right)_{H^4(\mathbb R^2)}\\
    &=-\sigma\left(\Lambda^4\Delta_\sigma^{-1}\left(\Delta_{x, y}\mathcal{H}^j (1 + v)^2 \partial_y v_\sigma^{k-j}\right), \Lambda^4\partial_yv^k_\sigma\right)_{L^2(\mathbb R^2)}\\
    &\quad-\sigma\left(\Lambda^4\Delta_\sigma^{-1}\left( \nabla_{x, y} \mathcal{H}^j (1 + v)^2 \nabla_{x, y}\partial_y v_\sigma^{k-j}\right), \Lambda^4\partial_yv^k_\sigma\right)_{L^2(\mathbb R^2)}.
\end{align*}
In view of the correspondence for Lemma \ref{lem4.4} that $\sigma\Lambda^2\Delta_\sigma^{-1}$ is a bounded operator on $L^2(\mathbb R^2)$, by applying the Cauchy-Schwarz inequality,
\begin{align*}
    &\left|-\sigma\left(\Lambda^4\Delta_\sigma^{-1}\left(\Delta_{x, y}\mathcal{H}^j (1 + v)^2 \partial_y v_\sigma^{k-j}\right), \Lambda^4\partial_yv^k_\sigma\right)_{L^2(\mathbb R^2)}\right|\\
    &\le\left\|\Lambda^2\left(\Delta_{x, y}\mathcal{H}^j (1 + v)^2 \partial_y v_\sigma^{k-j}\right)\right\|_{L^2(\mathbb R^2)}\left\|\Lambda^4\partial_yv^k_\sigma\right\|_{L^2(\mathbb R^2)}\\
    &\lesssim\left\|\Lambda^2\Delta_{x, y}\mathcal{H}^j (1 + v)^2 \right\|_{L^2(\mathbb R^2)}\left\|\Lambda^2 \partial_y v_\sigma^{k-j}\right\|_{L^2(\mathbb R^2)}\left\|\Lambda^4\partial_yv^k_\sigma\right\|_{L^2(\mathbb R^2)},
\end{align*}
and
\begin{align*}
    &\left|-\sigma\left(\Lambda^4\Delta_\sigma^{-1}\left( \nabla_{x, y} \mathcal{H}^j (1 + v)^2 \nabla_{x, y}\partial_y v_\sigma^{k-j}\right), \Lambda^4\partial_yv^k_\sigma\right)_{L^2(\mathbb R^2)}\right|\\
    &\le\left\|\Lambda^2\left( \nabla_{x, y} \mathcal{H}^j (1 + v)^2 \nabla_{x, y}\partial_y v_\sigma^{k-j}\right)\right\|_{L^2(\mathbb R^2)}
    \left\| \Lambda^4\partial_yv^k_\sigma\right\|_{L^2(\mathbb R^2)}\\
    &\lesssim\left\|\Lambda^2\nabla_{x, y} \mathcal{H}^j (1 + v)^2 \right\|_{L^2(\mathbb R^2)}\left\|\Lambda^2 \nabla_{x, y}\partial_y v_\sigma^{k-j}\right\|_{L^2(\mathbb R^2)}
    \left\| \Lambda^4\partial_yv^k_\sigma\right\|_{L^2(\mathbb R^2)},
\end{align*}
then using Sobolev embedding, in this case of $1\le j\le k$, we have
\begin{align*}
    \left|S_{1, j, 2}\right|&\le\left|-\sigma\left(\Lambda^4\Delta_\sigma^{-1}\left(\Delta_{x, y}\mathcal{H}^j (1 + v)^2 \partial_y v_\sigma^{k-j}\right), \Lambda^4\partial_yv^k_\sigma\right)_{L^2(\mathbb R^2)}\right|\\
    &\quad+\left|-\sigma\left(\Lambda^4\Delta_\sigma^{-1}\left( \nabla_{x, y} \mathcal{H}^j (1 + v)^2 \nabla_{x, y}\partial_y v_\sigma^{k-j}\right), \Lambda^4\partial_yv^k_\sigma\right)_{L^2(\mathbb R^2)}\right|\\
    &\lesssim\left\|\mathcal{H}^j(1+v)^2\right\|_{H^4(\mathbb R^2)}\left\|\partial_yv^{k-j}_\sigma\right\|_{H^4(\mathbb R^2)}\left\|\partial_yv^k_\sigma\right\|_{H^4(\mathbb R^2)},
\end{align*}
and 
\begin{align*}
    \left|S_{1, 0, 2}\right|&\le\left|-\sigma\left(\Lambda^4\Delta_\sigma^{-1}\left(\Delta_{x, y}(1 + v)^2 \partial_y v_\sigma^{k}\right), \Lambda^4\partial_yv^k_\sigma\right)_{L^2(\mathbb R^2)}\right|\\
    &\quad+\left|-\sigma\left(\Lambda^4\Delta_\sigma^{-1}\left( \nabla_{x, y} (1 + v)^2 \nabla_{x, y}\partial_y v_\sigma^{k}\right), \Lambda^4\partial_yv^k_\sigma\right)_{L^2(\mathbb R^2)}\right|\\
    &\lesssim\left\|v\right\|^2_{H^4(\mathbb R^2)}\left\|\partial_yv^k_\sigma\right\|^2_{H^4(\mathbb R^2)},
\end{align*}

We now estimate $S_{2, j}$ for $j=0, 1, \cdots, k$. We can rewrite it as 
\begin{align*}
    S_{2, j}&=\left(\Delta_\sigma^{-1}\left[\mathcal{H}^{j} \left(\partial_y(1 + v)^2\right)\Delta_\sigma\Delta_\sigma^{-1}\left( \mathcal{H}^{k-j}\partial_{y} v\right)\right], v^k_\sigma\right)_{H^4(\mathbb R^2)}\\
    &=\left(\Delta_\sigma^{-1}\left[\mathcal{H}^{j} \left(\partial_y(1 + v)^2\right)\Delta_\sigma\partial_y v^{k-j}_\sigma\right], v^k_\sigma\right)_{H^4(\mathbb R^2)}\\
    &=\left(\mathcal{H}^{j} \left(\partial_y(1 + v)^2\right)\partial_y v^{k-j}_\sigma, v^k_\sigma\right)_{H^4(\mathbb R^2)}+\left(\Delta_\sigma^{-1}\left[\mathcal{H}^{j} \left(\partial_y(1 + v)^2\right), \Delta_\sigma\right]\partial_y v^{k-j}_\sigma, v^k_\sigma\right)_{H^4(\mathbb R^2)}.
\end{align*}
Using the Cauchy-Schwarz inequality, we can get that 
\begin{align*}
    &\left|\left(\mathcal{H}^{j} \left(\partial_y(1 + v)^2\right)\partial_y v^{k-j}_\sigma, v^k_\sigma\right)_{H^4(\mathbb R^2)}\right|\\
    &\le\left\|\Lambda^4\left(\mathcal{H}^{j} \left(\partial_y(1 + v)^2\right)\partial_y v^{k-j}_\sigma\right)\right\|_{L^2(\mathbb R^2)}\left\|v^k_\sigma\right\|_{H^4(\mathbb R^2)}\\
    &\lesssim\left\|\mathcal{H}^{j} \left(\partial_y(1 + v)^2\right)\right\|_{H^4(\mathbb R^2)}\left\|\partial_y v^{k-j}_\sigma\right\|_{H^4(\mathbb R^2)}\left\|v^k_\sigma\right\|_{H^4(\mathbb R^2)}
\end{align*}
For the second term in $S_{2, j}$, noting that 
\begin{align*}
    \left[ \mathcal{H}^j \partial_y(1 + v)^2, \Delta_\sigma\right]=-\sigma\left[ \mathcal{H}^j \partial_y(1 + v)^2, \Delta_{x, y}\right]=-\sigma\left(\Delta_{x, y}\mathcal{H}^j \partial_y(1 + v)^2+\nabla_{x, y} \mathcal{H}^j \partial_y(1 + v)^2 \nabla_{x, y}\right),
\end{align*}
then we have
\begin{align*}
    &\left(\Delta_\sigma^{-1}\left[\mathcal{H}^{j} \left(\partial_y(1 + v)^2\right), \Delta_\sigma\right]\partial_y v^{k-j}_\sigma, v^k_\sigma\right)_{H^4(\mathbb R^2)}\\
    &=-\sigma\left(\Delta_\sigma^{-1}\left[\Delta_{x, y}\mathcal{H}^j \partial_y(1 + v)^2\partial_y v^{k-j}_\sigma\right], v^k_\sigma\right)_{H^4(\mathbb R^2)}\\
    &\quad-\sigma\left(\Delta_\sigma^{-1}\left[\nabla_{x, y} \mathcal{H}^j \partial_y(1 + v)^2 \nabla_{x, y}\partial_y v^{k-j}_\sigma\right], v^k_\sigma\right)_{H^4(\mathbb R^2)},
\end{align*}
as discussed in $S_{1, j, 2}$, we can obtain that 
\begin{align*}
    &\left|\left(\Delta_\sigma^{-1}\left[\mathcal{H}^{j} \left(\partial_y(1 + v)^2\right), \Delta_\sigma\right]\partial_y v^{k-j}_\sigma, v^k_\sigma\right)_{H^4(\mathbb R^2)}\right|\\
    &\le\left|-\sigma\left(\Lambda^4\Delta_\sigma^{-1}\left(\Delta_{x, y}\mathcal{H}^j \partial_y(1 + v)^2 \partial_y v_\sigma^{k-j}\right), \Lambda^4v^k_\sigma\right)_{L^2(\mathbb R^2)}\right|\\
    &\quad+\left|-\sigma\left(\Lambda^4\Delta_\sigma^{-1}\left( \nabla_{x, y} \mathcal{H}^j \partial_y(1 + v)^2 \nabla_{x, y}\partial_y v_\sigma^{k-j}\right), \Lambda^4v^k_\sigma\right)_{L^2(\mathbb R^2)}\right|\\
    &\lesssim\left\|\mathcal{H}^j\partial_y(1+v)^2\right\|_{H^4(\mathbb R^2)}\left\|\partial_yv^{k-j}_\sigma\right\|_{H^4(\mathbb R^2)}\left\|v^k_\sigma\right\|_{H^4(\mathbb R^2)}.
\end{align*}
As for the right term of the equality \eqref{inden}, by using the Cauchy-Schwarz inequality, we compute directly as follows: 
$$
\left| \left( \Delta_\sigma^{-1} k \eta t^{\eta - 1} \partial_y \mathcal{H}^{k - 1} v, v_\sigma^k \right)_{H^4(\mathbb{R}^2)} \right| \leq 2 \left( k\eta t^{\eta - 1} \right)^2 \left\| \partial_y v_\sigma^{k - 1} \right\|_{H^4(\mathbb{R}^2)}^2 + \frac18 \left\| v_\sigma^k \right\|_{H^4(\mathbb{R}^2)}^2.
$$

Combining these results, we have 
\begin{align*}
   &\frac12\frac{d}{dt}\|v^k_\sigma(t)\|^2_{H^4(\mathbb R^2)}+\frac14\|\partial_y v^k_\sigma\|_{H^4(\mathbb R^2)}
   \le B_1\left\|v_\sigma^k\right\|^2_{H^4(\mathbb R^2)}\left\|\partial_yv^k_\sigma\right\|^2_{H^4(\mathbb R^2)}+B_1\left\|v^k_\sigma\right\|^2_{H^4(\mathbb R^2)}\\
   &\qquad\qquad+B_1\sum_{j=1}^k\left\| \mathcal{H}^j (1 + v)^2 \right\|_{H^4(\mathbb R^2)} \left\|\partial_y v_\sigma^{k-j}\right\|_{H^4(\mathbb R^2)} \left\|\partial_yv^k_\sigma\right\|_{H^4(\mathbb R^2)}\\
   &\qquad\qquad+B_1\sum_{j=0}^k\left\| \mathcal{H}^j \partial_y(1 + v)^2 \right\|_{H^4(\mathbb R^2)} \left\|\partial_y v_\sigma^{k-j}\right\|_{H^4(\mathbb R^2)} \left\|v^k_\sigma\right\|_{H^4(\mathbb R^2)} \\
   &\qquad\qquad+2(k\eta t^{\eta - 1}) \left\| \partial_y v_\sigma^{k - 1} \right\|_{H^4(\mathbb{R}^2)}^2,
\end{align*}
where $B_1>0$ is a constant, independent of $k$. Taking $\sigma\to0$,
\begin{align*}
   &\frac12\frac{d}{dt}\|\mathcal{H}^k v(t)\|^2_{H^4(\mathbb R^2)}+\frac14\|\partial_y \mathcal{H}^k v\|_{H^4(\mathbb R^2)}
   \le B_1\left\|v\right\|^2_{H^4(\mathbb R^2)}\left\|\partial_y \mathcal{H}^k v\right\|^2_{H^4(\mathbb R^2)}+B_1\left\|\mathcal{H}^k v\right\|^2_{H^4(\mathbb R^2)}\\
   &\qquad\qquad+B_1\sum_{j=1}^k\left\| \mathcal{H}^j (1 + v)^2 \right\|_{H^4(\mathbb R^2)} \left\|\partial_y \mathcal{H}^{k-j}v\right\|_{H^4(\mathbb R^2)} \left\|\partial_y \mathcal{H}^k v\right\|_{H^4(\mathbb R^2)}\\
   &\qquad\qquad+B_1\sum_{j=0}^k\left\| \mathcal{H}^j \partial_y(1 + v)^2 \right\|_{H^4(\mathbb R^2)} \left\|\partial_y \mathcal{H}^{k-j}v\right\|_{H^4(\mathbb R^2)} \left\|\mathcal{H}^k v\right\|_{H^4(\mathbb R^2)} \\
   &\qquad\qquad+2(k\eta t^{\eta - 1})^2 \left\| \partial_y \mathcal{H}^{k - 1} v \right\|_{H^4(\mathbb{R}^2)}^2.
\end{align*}
For any $0<t\le T$, integrating from 0 to $t$
\begin{align*}
   &\frac12\|\mathcal{H}^k v(t)\|^2_{H^4(\mathbb R^2)}+\frac14\int_0^t\|\partial_y \mathcal{H}^k v(s)\|_{H^4(\mathbb R^2)}ds\\
   &\le\frac12\|\mathcal{H}^k v(t)\|^2_{H^4(\mathbb R^2)}\Big|_{t=0} + B_1\int_0^t\left\|\mathcal{H}^k v(s)\right\|^2_{H^4(\mathbb R^2)}ds + B_1\int_0^t\left\|v(s)\right\|^2_{H^4(\mathbb R^2)}\left\|\partial_y \mathcal{H}^k v(s)\right\|^2_{H^4(\mathbb R^2)}ds\\
   &\quad+B_1\sum_{j=1}^k\int_0^t\left\| \mathcal{H}^j (1 + v)^2 (s)\right\|_{H^4(\mathbb R^2)} \left\|\partial_y \mathcal{H}^{k-j}v(s)\right\|_{H^4(\mathbb R^2)} \left\|\partial_y \mathcal{H}^k v(s)\right\|_{H^4(\mathbb R^2)}ds\\
   &\quad+B_1\sum_{j=0}^k\int_0^t\left\| \mathcal{H}^j \partial_y(1 + v)^2(s) \right\|_{H^4(\mathbb R^2)} \left\|\partial_y \mathcal{H}^{k-j}v(s)\right\|_{H^4(\mathbb R^2)} \left\|\mathcal{H}^k v(s)\right\|_{H^4(\mathbb R^2)}ds\\
   &\quad+2\int_0^t (k\eta t^{\eta - 1})^2 \left\| \partial_y \mathcal{H}^{k - 1} v(s) \right\|_{H^4(\mathbb{R}^2)}^2 ds \\
   &=B_1\int_0^t\left\|\mathcal{H}^k v(s)\right\|^2_{H^4(\mathbb R^2)}ds+Q_1+Q_2+Q_3+Q_4.
\end{align*}
where
\begin{align*}
\|\mathcal{H}^k v(t)\|^2_{H^4(\mathbb R^2)}\Big|_{t=0}=0.
\end{align*}
Using Proposition \ref{existence111}, the term $Q_1$ can be bounded as
\begin{align*}
    Q_1&\le B_1\left\|v\right\|^2_{L^\infty([0, T]; H^4(\mathbb R^2))}\int_0^t\left\|\partial_y \mathcal{H}^k v(s)\right\|^2_{H^4(\mathbb R^2)}ds\\
    &\le B_2\varepsilon\int_0^t\left\|\partial_y \mathcal{H}^k v(s)\right\|^2_{H^4(\mathbb R^2)}ds.
\end{align*}
For $Q_2$, we can directly compute
\begin{align*}
    Q_2&\le B_1\sum_{j=1}^k\int_0^t\left\| \mathcal{H}^j (1 + v)^2 (s)\right\|_{H^4(\mathbb R^2)} \left\|\partial_y \mathcal{H}^{k-j}v(s)\right\|_{H^4(\mathbb R^2)} \left\|\partial_y \mathcal{H}^k v(s)\right\|_{H^4(\mathbb R^2)}ds\\
    &\le B_1\sum_{j=1}^k\left\| \mathcal{H}^j (1 + v)^2 \right\|_{L^\infty([0, t]; H^4(\mathbb R^2))}\int_0^t \left\|\partial_y \mathcal{H}^{k-j}v(s)\right\|_{H^4(\mathbb R^2)} \left\|\partial_y \mathcal{H}^k v(s)\right\|_{H^4(\mathbb R^2)}ds\\
    &\le B_1\sum_{j=1}^k\left\| \mathcal{H}^j (1 + v)^2 \right\|_{L^\infty([0, t]; H^4(\mathbb R^2))}\left(\int_0^t \left\|\partial_y \mathcal{H}^{k-j}v(s)\right\|^2_{H^4(\mathbb R^2)}ds \right)^{\frac12}\left(\int_0^t \left\|\partial_y \mathcal{H}^k v(s)\right\|^2_{H^4(\mathbb R^2)}ds\right)^{\frac12}.
\end{align*}
To estimate $\left\| \mathcal{H}^j (1 + v)^2 \right\|_{L^\infty([0, t]; H^4(\mathbb R^2))}$, we use the Leibniz-type formula \eqref{Leibiniz-type formula} 
\begin{align*}
    \left\| \mathcal{H}^j (1 + v)^2 \right\|_{L^\infty([0, t]; H^4(\mathbb R^2))}&\le \sum_{i=0}^j\binom{j}{i}\left\|\mathcal{H}^iv\mathcal{H}^{j-i}v\right\|_{L^\infty([0, t]; H^4(\mathbb R^2))}\\ 
    &\lesssim \sum_{i=0}^j\binom{j}{i}\left\|\mathcal{H}^iv\right\|_{L^\infty([0, t]; H^4(\mathbb R^2))}\left\|\mathcal{H}^{j-i}v\right\|_{L^\infty([0, t]; H^4(\mathbb R^2))},
\end{align*}
for $j\le k-1$, applying \eqref{hypothesis}, one has
\begin{align*}
    \left\| \mathcal{H}^j (1 + v)^2 \right\|_{L^\infty([0, t]; H^4(\mathbb R^2))}&\lesssim \sum_{i=0}^j\frac{j!}{i!(j-i)!}\cdot\frac{A^{i-1}i!}{(i+1)^2}\cdot\frac{A^{j-i-1}(j-i)!}{(j-i+1)^2}\\
    &\lesssim\frac{A^{j-2}j!}{(j+1)^2}\sum_{i=0}^j\frac{(j+1)^2}{(i+1)^2(j-i+1)^2}\lesssim\frac{A^{j-2}j!}{(j+1)^2},
\end{align*}
here, in the preceding sequence of estimates, we use
\begin{align}\label{sum}
\begin{split}
    \sum_{i=0}^j\frac{(j+1)^2}{(i+1)^2(j-i+1)^2}
    &=\sum_{0\le i\le j/2}\frac{(j+1)^2}{(i+1)^2(j-i+1)^2}+\sum_{j/2< i\le j}\frac{(j+1)^2}{(i+1)^2(j-i+1)^2}\\
    &\le\sum_{0\le i\le j/2}\frac{(j+1)^2}{(i+1)^2(j/2+1)^2}+\sum_{j/2< i\le j}\frac{(j+1)^2}{(j/2+1)^2(j-i+1)^2}\\
    &\le\sum_{0\le i\le j/2}\frac{4(j+1)^2}{(i+1)^2(j+1)^2}+\sum_{j/2< i\le j}\frac{4(j+1)^2}{(j+1)^2(j-i+1)^2}\\
    &\le8+\sum_{1\le i\le j/2}\frac{4}{i(i+1)}+\sum_{j/2< i\le j-1}\frac{4}{(j-i)(j-i+1)}\le24.
\end{split}
\end{align}
For $j=k$, applying \eqref{hypothesis}, we have 
\begin{align*}
    \left\| \mathcal{H}^k (1 + v)^2 \right\|_{L^\infty([0, t]; H^4(\mathbb R^2))}&\lesssim\left\|v\right\|_{L^\infty([0, t]; H^4(\mathbb R^2))}\left\|\mathcal{H}^{k}v\right\|_{L^\infty([0, t]; H^4(\mathbb R^2))}\\
    &\quad + \sum_{i=1}^{k-1}\binom{k}{i}\left\|\mathcal{H}^iv\right\|_{L^\infty([0, t]; H^4(\mathbb R^2))}\left\|\mathcal{H}^{k-i}v\right\|_{L^\infty([0, t]; H^4(\mathbb R^2))}\\
    &\lesssim\left\|v\right\|_{L^\infty([0, t]; H^4(\mathbb R^2))}\left\|\mathcal{H}^{k}v\right\|_{L^\infty([0, t]; H^4(\mathbb R^2))}+\frac{A^{k-2}k!}{(k+1)^2}.
\end{align*}
Then, applying Proposition \ref{existence111} and \eqref{hypothesis}, we can obtain
\begin{align*}
    Q_2&\le B_3\sum_{j=1}^{k-1}\frac{A^{j-2}j!}{(j+1)^2}\cdot\frac{A^{k-j-1}(k-j)!}{(k-j+1)^2}\left(\int_0^t \left\|\partial_y \mathcal{H}^k v(s)\right\|^2_{H^4(\mathbb R^2)}ds\right)^{\frac12}\\
    &\quad+B_3\varepsilon\left\|H^{k}v\right\|_{L^\infty([0, t]; H^4(\mathbb R^2))}\left(\int_0^t \left\|\partial_y \mathcal{H}^k v(s)\right\|^2_{H^4(\mathbb R^2)}ds\right)^{\frac12}\\
    &\quad+\frac{B_3A^{k-2}k!}{(k+1)^2}\left(\int_0^t \left\|\partial_y \mathcal{H}^k v(s)\right\|^2_{H^4(\mathbb R^2)}ds\right)^{\frac12}\\
    &\le \frac{1}{24}\int_0^t \left\|\partial_y \mathcal{H}^k v(s)\right\|^2_{H^4(\mathbb R^2)}ds + 18\left(B_3\varepsilon\right)^2\left\|\mathcal{H}^{k}v\right\|^2_{L^\infty([0, t]; H^4(\mathbb R^2))}\\
    &\quad+18\left(B_3\sum_{j=1}^{k-1}\frac{A^{k-3}j!(k-j)!}{(j+1)^2(k-j+1)^2}\right)^2+18\left(B_3\frac{A^{k-2}k!}{(k+1)^2}\right)^2.
\end{align*}
We now estimate the summation over $j$ above. Using \eqref{sum} and $p!q!\le(p+q)!$ for all $p, q\in\mathbb N$, one has
\begin{align}\label{sum1}
    \sum_{j=1}^{k-1}\frac{A^{k-3}j!(k-j)!}{(j+1)^2(k-j+1)^2}&\le \frac{A^{k-3}k!}{(k+1)^2}\sum_{j=1}^{k-1}\frac{(k+1)^2}{(j+1)^2(k-j+1)^2}\le 24\frac{A^{k-3}k!}{(k+1)^2}.
\end{align}
Hence, we can get that 
\begin{align*}
    Q_2
    &\le \frac{1}{24}\int_0^t \left\|\partial_y \mathcal{H}^k v(s)\right\|^2_{H^4(\mathbb R^2)}ds + 18\left(B_3\varepsilon\right)^2\left\|\mathcal{H}^{k}v\right\|^2_{L^\infty([0, t]; H^4(\mathbb R^2))}+18\left(25B_3\frac{A^{k-2}k!}{(k+1)^2}\right)^2.
\end{align*}

It remains to consider the term $Q_3$. Applying the Cauchy-Schwarz inequality and \eqref{hypothesis}, $Q_3$ is directly estimated as
\begin{align*}
    Q_3&\le B_1\sum_{j=0}^k\left(\int_0^t\left\| \mathcal{H}^j \partial_y(1 + v)^2(s) \right\|^2_{H^4(\mathbb R^2)}ds\right)^{\frac12}\left(\int_0^t \left\|\partial_y \mathcal{H}^{k-j}v(s)\right\|^2_{H^4(\mathbb R^2)}ds\right)^{\frac{1}{2}} \left\|\mathcal{H}^k v\right\|_{L^\infty([0, t]; H^4(\mathbb R^2))}\\
    &\le B_1\sum_{j=0}^k\frac{A^{k-j-1}(k-j)!}{(k-j+1)^2}\left(\int_0^t\left\| \mathcal{H}^j \partial_y(1 + v)^2(s) \right\|^2_{H^4(\mathbb R^2)}ds\right)^{\frac12} \left\|\mathcal{H}^k v\right\|_{L^\infty([0, t]; H^4(\mathbb R^2))}
\end{align*}
We now estimate the integration above, by using the Leibniz-type formula \eqref{Leibiniz-type formula}, one has
\begin{align*}
    \left\| \mathcal{H}^j \partial_y(1 + v)^2 \right\|_{H^4(\mathbb R^2)}&=2\left\| \mathcal{H}^j\left( (1 + v) \partial_y(1+v)\right)\right\|_{H^4(\mathbb R^2)}\\
    &\le2\sum_{i=0}^j\binom{j}{i}\left\|\mathcal{H}^iv \partial_y\mathcal{H}^{j-i}v\right\|_{H^4(\mathbb R^2)}\\
    &\lesssim \sum_{i=0}^j\binom{j}{i}\left\|\mathcal{H}^iv\right\|_{H^4(\mathbb R^2)}\left\|\partial_y \mathcal{H}^{j-i}v\right\|_{H^4(\mathbb R^2)}.
\end{align*}
Hence, we can use the Minkowski inequality, \eqref{hypothesis} and \eqref{sum} to get
\begin{align*}
    &\left(\int_0^t\left\| \mathcal{H}^j \partial_y(1 + v)^2(s) \right\|^2_{H^4(\mathbb R^2)}ds\right)^{\frac12}\\
    &\lesssim \sum_{i=0}^j\binom{j}{i}\left\|\mathcal{H}^iv\right\|_{L^\infty([0, t]; H^4(\mathbb R^2))}\left(\int_0^t\left\|\partial_y \mathcal{H}^{j-i}v(s)\right\|^2_{H^4(\mathbb R^2)}ds\right)^{\frac12}\\
    &\lesssim \sum_{i=0}^j\frac{j!}{i!(j-i)!}\cdot\frac{A^{i-1}i!}{(i+1)^2}\cdot\frac{A^{j-i-1}(j-i)!}{(j-i+1)^2}\lesssim\frac{A^{j-2}j!}{(j+1)^2}, \quad \text{for} \ j\le k-1,
\end{align*}
and using Proposition \ref{existence111}
\begin{align*}
    &\left(\int_0^t\left\| \mathcal{H}^k \partial_y(1 + v)^2(s) \right\|^2_{H^4(\mathbb R^2)}ds\right)^{\frac12}\\
    &\lesssim\varepsilon\left(\int_0^t\left\|\partial_y\mathcal{H}^{k}v(s)\right\|^2_{H^4(\mathbb R^2)}ds\right)^{\frac12}+\varepsilon\left\|\mathcal{H}^{k}v\right\|_{L^\infty([0, t]; H^4(\mathbb R^2))}+\frac{A^{k-2}k!}{(k+1)^2}.
\end{align*}
We thus use the Cauchy-Schwarz inequality and \eqref{sum1} to get
\begin{align*}
    Q_3
    &\le B_4\sum_{j=0}^{k-1}\frac{A^{k-j-1}(k-j)!}{(k-j+1)^2}\cdot\frac{A^{j-2}j!}{(j+1)^2} \left\|\mathcal{H}^k v\right\|_{L^\infty([0, t]; H^4(\mathbb R^2))}+B_4\varepsilon\left\|\mathcal{H}^k v\right\|^2_{L^\infty([0, t]; H^4(\mathbb R^2))}\\
    &\quad+B_4\varepsilon\left(\int_0^t\left\|\partial_y\mathcal{H}^{k}v(s)\right\|^2_{H^4(\mathbb R^2)}ds\right)^{\frac12}\left\|\mathcal{H}^k v\right\|_{L^\infty([0, t]; H^4(\mathbb R^2))}+B_4\frac{A^{k-2}k!}{(k+1)^2}\left\|\mathcal{H}^k v\right\|_{L^\infty([0, t]; H^4(\mathbb R^2))}\\
    &\le B_4\varepsilon\int_0^t\left\|\partial_y\mathcal{H}^{k}v(s)\right\|^2_{H^4(\mathbb R^2)}ds+\left(2B_4\varepsilon+\frac{1}{12}\right)\left\|\mathcal{H}^k v\right\|^2_{L^\infty([0, t]; H^4(\mathbb R^2))}+6\left(25B_4\frac{A^{k-2}k!}{(k+1)^2}\right)^2.
\end{align*}
As for the term $Q_4$, we have 
\begin{align*}
    Q_4 \leq 2k^2 \eta^2 (T + 1)^{2(\eta - 1)} \int_0^t \left\| \partial_y \mathcal{H}^{k - 1} v(s) \right\|^2_{H^4(\mathbb{R}^2)} ds \leq B_5 \left( \frac{A^{k - 2}k!}{(k + 1)^2} \right)^2, 
\end{align*}
here we take $B_5 = 16 \eta^2 (T + 1)^{2(\eta - 1)}$.

Consequently, combining the results of $Q_1-Q_4$, we have that for all $0<t\le T$
\begin{align}\label{integration}
\begin{split}
   &\frac12\|\mathcal{H}^k v(t)\|^2_{H^4(\mathbb R^2)}+\frac14\int_0^t\|\partial_y \mathcal{H}^k v(s)\|_{H^4(\mathbb R^2)}ds\\
   &\le B_1\int_0^t\left\|\mathcal{H}^k v(s)\right\|^2_{H^4(\mathbb R^2)}ds+\left(\frac{1}{24}+\left(B_2+B_4\right)\varepsilon\right)\int_0^t\left\|\partial_y \mathcal{H}^k v(s)\right\|^2_{H^4(\mathbb R^2)}ds\\
   &\quad+\left(18\left(B_3\varepsilon\right)^2+2B_4\varepsilon+\frac{1}{12}\right)\left\|\mathcal{H}^k v\right\|^2_{L^\infty([0, t]; H^4(\mathbb R^2))}+18\left(25\left(B_3+B_4+B_5\right)\frac{A^{k-2}k!}{(k+1)^2}\right)^2\\
   &\le B_1\int_0^t\left\|\mathcal{H}^k v(s)\right\|^2_{H^4(\mathbb R^2)}ds+\frac{1}{8}\int_0^t\left\|\partial_y \mathcal{H}^k v(s)\right\|^2_{H^4(\mathbb R^2)}ds+\frac14\left\|\mathcal{H}^k v\right\|^2_{L^\infty([0, T]; H^4(\mathbb R^2))}\\
   &\quad+18\left(25\left(B_3+B_4+B_5\right)\frac{A^{k-2}k!}{(k+1)^2}\right)^2,
\end{split}
\end{align}
if choosing $(B_2+B_4+B_5)\varepsilon\le\frac{1}{12}$ and $18\left(B_3\varepsilon\right)^2\le\frac{1}{12}$. Moving the negative integral term on the left hand side of above inequality, we deduce that
\begin{align*}
   &\frac14\left\|\mathcal{H}^k v\right\|^2_{L^\infty([0, t]; H^4(\mathbb R^2))}\le B_1\int_0^t\left\|\mathcal{H}^k v(s)\right\|^2_{H^4(\mathbb R^2)}ds+18\left(25\left(B_3+B_4+B_5\right)\frac{A^{k-2}k!}{(k+1)^2}\right)^2.
\end{align*}
Moreover, plugging it into \eqref{integration}, one has
\begin{align*}
   &\|\mathcal{H}^k v(t)\|^2_{H^4(\mathbb R^2)}+\frac14\int_0^t\|\partial_y \mathcal{H}^k v(s)\|_{H^4(\mathbb R^2)}ds\\
   &\le 4B_1\int_0^t\left\|\mathcal{H}^k v(s)\right\|^2_{H^4(\mathbb R^2)}ds+72\left(25\left(B_3+B_4+B_5\right)\frac{A^{k-2}k!}{(k+1)^2}\right)^2,
\end{align*}
then moving the negative integral term on the left hand side of above inequality and applying the Gronwall's inequality, it follows that 
\begin{align*}
   &\|\mathcal{H}^k v(t)\|^2_{H^4(\mathbb R^2)}\le 72\left(25\left(B_3+B_4+B_5\right)\frac{A^{k-2}k!}{(k+1)^2}\right)^2 e^{4B_1T},
\end{align*}
thus, we have
\begin{align*}
   \|\mathcal{H}^k v(t)\|^2_{H^4(\mathbb R^2)}+\frac14\int_0^t\|\partial_y \mathcal{H}^k v(s)\|_{H^4(\mathbb R^2)}ds&\le 72\left(4B_1Te^{4B_1T}+1\right)\left(25\left(B_3+B_4+B_5\right)\frac{A^{k-2}k!}{(k+1)^2}\right)^2\\
   &\le\left(\frac{A^{k-1}k!}{(k+1)^2}\right)^2,
\end{align*}
if taking $A\ge 45\left(B_3+B_4+B_5\right)\sqrt{4B_1Te^{4B_1T}+1}$.

\section{Proof of Theorem \ref{Theorem}}\label{section5}

We now give the proof of our main Theorem \ref{Theorem}: Suppose now $\eta > 2$, then $\frac \eta 2 > 1$. Let
$$
\mathcal{H}_{\eta/2} = \frac{t^{\eta/2 + 1}}{\eta/2 + 1} \partial_x + t^{\eta/2} \partial_y, 
$$
then $\partial_x$ and $\partial_y$ can be represented as the linear combination of $\mathcal{H}_{\eta/2}$ and $\mathcal{H}_{\eta}$, 
$$
\begin{cases}
t^{\eta + 1} \partial_x = -\frac{\left( \eta + 2 \right) \left( \eta + 1 \right)}{ \eta} \mathcal{H}_{\eta} +\frac{\left( \eta + 2 \right) \left( \eta + 1 \right)}{\eta} t^{\eta/2} \mathcal{H}_{\eta/2}, \\
\\
t^\eta \partial_y =  \frac{2\eta + 2}{\eta} \mathcal{H}_{\eta} - \frac{\eta + 2}{ \eta} t^{\eta/2} \mathcal{H}_{\eta/2}.
\end{cases}
$$
Then for any $\ell \in \mathbb{N}$, we compute as follows: 
\begin{align*}
& \quad t^{\left( \eta + 1 \right)\ell} \left\| \partial_x^\ell v(t) \right\|_{H^4(\mathbb{R}^2)} \\
&=\left\| \left( -\frac{\left( \eta + 2 \right) \left( \eta + 1 \right)}{ \eta} \mathcal{H}_{\eta} +\frac{\left( \eta + 2 \right) \left( \eta + 1 \right)}{\eta} t^{\eta/2} \mathcal{H}_{\eta/2} \right)^\ell v(t) \right\|_{H^4(\mathbb{R}^2)} \\
&\leq 2^\ell \left( \left\| \left( -\frac{(\eta + 2)(\eta + 1)}{\eta} \mathcal{H}_\eta \right)^\ell v(t) \right\|_{H^4(\mathbb{R}^2)} + \left\| \left( \frac{(\eta + 2)(\eta + 1)}{\eta} t^{\eta / 2} \mathcal{H}_{\eta / 2} \right)^\ell v(t) \right\|_{H^4(\mathbb{R}^2)} \right).
\end{align*}
Then by using the Proposition \ref{Proposition 3.1.}, there exists $A_1, A_2>0$ such that
$$
 \left\| \mathcal{H}^\ell_{\eta} v(t) \right\|_{H^4(\mathbb{R}^2)} \le A_1^{\ell + 1} \ell!,\ \ \ \ \  \left\| \mathcal{H}^\ell_{\eta/2} v(t) \right\|_{H^4(\mathbb{R}^2)}\le A_2^{\ell + 1} \ell!,
$$
it follows that, 
\begin{equation}\label{5-1}
    \sup_{0 < t \leq T} t^{(\eta + 1)\ell} \left\| \partial_x^\ell v(t) \right\|_{H^4(\mathbb{R}^2)} \leq (\tilde{A})^{\ell} \ell!, \quad \forall \ell \in \mathbb{N}.
\end{equation}
where $\tilde{A}$ depends on $\eta$ and $T$. 
Similarly, we can obtain 
\begin{equation}\label{5-2}
\sup_{0 < t \leq T} t^{\eta n} \left\| \partial_y^n v(t) \right\|_{H^4(\mathbb{R}^2)} \leq \left( \bar{A} \right)^n n!, \quad \forall n \in \mathbb{N}.
\end{equation}
Combining the \ref{5-1} and \ref{5-2}, by using  integration by parts, we have 
\begin{equation*}
\begin{aligned}
&\sup_{0 < t \leq T} t^{\left( \eta + 1 \right)\ell + n \eta} \left\| \partial_x^\ell \partial_y^n u(t) \right\|_{H^4(\mathbb{R}^2)} \\
&\leq \sup_{0 < t \leq T} \left( t^{2\left( \eta + 1 \right)\ell} \left\| \partial_x^{2\ell} u(t) \right\|_{H^4(\mathbb{R}^2)} \right)^{\frac{1}{2}} \left( t^{2\eta n} \left\| \partial_y^{2n} u(t) \right\|_{H^4(\mathbb{R}^2)} \right)^{\frac{1}{2}} \\
&\leq \left( (\tilde{A})^{2\ell} (2\ell)! (\bar{A})^{2n} (2n)! \right)^{\frac12} \\
&\leq L^{\ell + n + 1} \ell! n!.
\end{aligned}
\end{equation*} 
Since $v(t) = u(t) - 1$, we have then finished the proof Theorem \ref{Theorem}.

\end{proof}

\bigskip
\noindent {\bf Acknowledgements.} This work was supported by the NSFC (No.12031006) and the Fundamental Research Funds for the Central Universities of China.

\end{document}